\newtheorem{theorem}                 {Theorem}      [section]
\newtheorem{proposition}  [theorem]  {Proposition}
\newtheorem{corollary}    [theorem]  {Corollary}
\newtheorem{lemma}        [theorem]  {Lemma}
\theoremstyle{definition}
\newtheorem{example}      [theorem]  {Example}
\newtheorem{remark}       [theorem]  {Remark}
\newtheorem{definition}   [theorem]  {Definition}
\numberwithin{equation}{section}
\let\oldmarginpar\marginpar
\renewcommand\marginpar[1]{\ \oldmarginpar[\raggedleft\footnotesize #1]{\raggedright\footnotesize #1}}
\renewcommand\arraycolsep{1pt}
\def \U{\text{\bf U}}
\def \u{\mathfrak{u}}
\def \gl{\mathfrak{gl}}
\def \fieldC{\mathbb{C}}
\def \d{\mathrm{d}}
\def \wordspan{\mathrm{span}}
\def \wordrank{\mathrm{rank}}
\def \image{\mathrm{Im}}
\def \End{\mathrm{End}}
\def \wordalg{\mathrm{alg}}
\def \H{\mathcal{H}}
\begin{document}

\title[Explicit construction of harmonic two-spheres into the complex Grassmannian]{Explicit construction of harmonic two-spheres into the complex Grassmannian}

\author{Maria João Ferreira}
\author{Bruno Ascenso Simões}

\address{\textit{MJF}, \textit{BAS}: Centro de Matemática e Aplicações Fundamentais, Universidade de Lisboa, Av.\ Prof.\ Gama Pinto 2\\1649-003, Lisbon, Portugal}

\address{\textit{MJF}: Departamento de Matemática da FCUL, C6, 2º piso, Campo Grande, 1749-016 Lisboa, Portugal}

\address{\textit{BAS}: Universidade Lusófona, Núcleo de Investigação em Matemática, Campo Grande, 376, 1749-024 Lisbon, Portugal}

\email{mjferr@ptmat.fc.ul.pt; b.simoes@ptmat.fc.ul.pt}

%

\begin{abstract}
We present an explicit description of all harmonic maps of finite uniton number from a Riemann surface into a complex Grassmannian. Namely, starting from a constant map $Q$ and a collection of meromorphic functions and their derivatives, we show how to algebraically construct all harmonic maps from the two-sphere into a given Grassmannian $G_p(\mathbb C^n)$. In this setting the uniton number depends on $Q$ and $p$ and we obtain a sharp estimate for it.
%
%
%
\end{abstract}

\subjclass[2000]{Primary 58E20, Secondary 53C43}
\keywords{harmonic map; uniton; Grassmannian}

\maketitle

\thispagestyle{empty}

\section*{Introduction}

Harmonic spheres in complex Grassmannians have been extensively studied using various techniques (see \cite{BurstallWood:86,ChernWolfson:85,ChernWolfson:87}). As it is well-known, the complex Grassmannian sits naturally in the unitary group $\U(N)$ equipped with its standard bi-invariant metric, via its Cartan totally geodesic embedding. Using a Bäcklund transformation technique, Uhlenbeck \cite{Uhlenbeck:89} obtained a method to construct successive harmonic maps into $\U(N)$ from an initial harmonic map. She proved that through this process, called ``adding a uniton'', one can obtain all harmonic maps from a Riemann surface with finite uniton number. Subsequent works have expanded this view. However obtaining explicit unitons involves solving $\overline{\partial}$-problems which is a difficult task \cite{HeShen:04,Wood:89}. In \cite{FerreiraSimoesWood:09} J.~C. Wood and the authors gave an algebraic procedure to construct these unitons so that one can build all harmonic maps with finite uniton number from a Riemann surface into $\U(N)$, from freely chosen meromorphic functions into $\mathbb C^n$ and their derivatives. Although these harmonic maps include those with values in the Grassmannian, no explicit way was given to decide when, from a specific meromorphic data, one could obtain a Grassmannian-valued harmonic map. The aim of this paper is to study, from this point of view, harmonic maps with finite uniton number from a Riemann surface into a Grassmannian manifold. More specifically, we present algebraic conditions, to be satisfied by the initial data, ensuring that the obtained harmonic maps have values in a Grassmannian (Theorem \ref{Theorem:MainTheoremNonBundleVersion}). Furthermore, for a specific Grassmannian manifold $G_p(\mathbb C^n)$, we show how to organize our initial data so that the harmonic map has its image in the given Grassmannian manifold (Theorem \ref{Theorem:SynthesizedVersion}).

Associated to a harmonic map $\phi: M^2 \rightarrow \U(n)$, there is a spectral deformation, called the extended solution; that is a family of maps $\Phi_{\lambda}:M^2 \rightarrow \U(n)$, depending smoothly on $\lambda \in S^1$, such that $\phi=Q\Phi_{-1}$ (for some $Q\in \U(n)$) and the differential form $A^{\lambda}=\frac{1}{2}\Phi_{\lambda}^{-1}d \Phi_{\lambda}$ satisfies \cite{Uhlenbeck:89}
\[A^{\lambda}=\frac{1}{2}(1-\lambda^{-1})A_z^{\lambda} + \frac{1}{2}(1-\lambda)A_{\bar{z}}^{\lambda}.\]
The extended solution is not, in general, unique. However, Uhlenbeck proved that, given a harmonic map, there exists a unique extended solution $\Phi_{\lambda}$ of type-one, i.e., such that the image of $\Phi_0$ is full. Furthermore, given a harmonic map $\phi: M^2 \rightarrow G_p(\mathbb C^n)$ with finite uniton number, there exists $Q=\pi_{F_0}-\pi_{F_0}^{\perp}\in \U(n)$, such that $\phi=Q\Phi_{-1}$, where $\Phi_{\lambda}$ denotes the type-one extended solution, and $\pi_{F_0}$ denotes the orthogonal projection onto a complex subspace $F_0$ of $\mathbb C^n$. Under these conditions, we present an estimate for the uniton number of such a harmonic map, depending on $p$ and $Q$. This estimate is sharp. It is known that, for a harmonic map $\phi: M^2 \rightarrow G_p(\mathbb C^n)$, the maximal uniton number is less or equal than $2\min\{p,n-p\}$ (\cite{BurstallGuest:97,DongShen:96}). We show that this value is only attained when $Q=\pm I$. Unlike the case of harmonic maps $\phi:S^2 \rightarrow \U(n)$, for $G_p(\mathbb C^n)$-valued harmonic maps, the possible uniton numbers depend on $Q$.

In \cite{Segal:89}, G.~Segal gave a model for the loop group of $\U(n)$ as an (infinite-dimensional) Grassmannian and showed that harmonic maps of finite uniton number correspond to holomorphic maps into a related finite-dimensional Grassmannian. We interpret our results in the framework of the Grassmannian model and relate them with those in \cite{FerreiraSimoesWood:09}.

The paper is organized as follows: in Section \ref{Section:ExplicitFormulaeForHarmonicMapsIntoUn} we recall Uhlenbeck's factorization an explain the algebraic procedure, presented in \cite{FerreiraSimoesWood:09}, to construct explicit unitons. Section \ref{Section:HarmonicMapsFromHolomorphicData} is devoted to the study of Grassmannian-valued harmonic maps. In \ref{Subsection:ExplicitConstruction} we describe the main results for harmonic maps $\phi:S^2\rightarrow G_*(\mathbb C^n)$ and present examples. Harmonic maps with values in a specific Grassmannian manifold are treated in \ref{Subsection:HarmonicMapsIntoGpCn}. Subsection \ref{Subsection:GrassmannianModel} is devoted to the interpretation of our  construction in the Grassmannian model setting. All involved calculations and proofs are presented, separately, in Subsection \ref{Subsection:Proofs}.

\bigskip

The authors are grateful to John~C. Wood for very useful discussions on this work.


\section{Preliminaries: harmonic maps into $\U(n)$}\label{Section:ExplicitFormulaeForHarmonicMapsIntoUn}


Let $M^2$ be a Riemann surface. For any smooth map $\phi:M^2\to\U(n)$, let $A^{\phi}$ denote one half the pull-back of the Maurer--Cartan form,
\begin{equation}\label{Equation:DefinitionOfA}
A^{\phi}=\textstyle{\frac{1}{2}}\phi^{-1}\d\phi.
\end{equation}

Choosing a local complex coordinate $z$ on an open subset of $M^2$, we write $A^{\phi}=A^{\phi}_z\d z+A^{\phi}_{\bar{z}}\d\bar{z}$, where $A^{\phi}_z$ and $A^{\phi}_{\bar{z}}$ denote the $(1,0)$- and $(0,1)$-parts (with respect to $M^2$), respectively. Let $\underline{\fieldC}^n$ denote the trivial complex bundle $M^2 \times \fieldC^n$ equipped with the standard Hermitian inner product: $<u,v>=u_1\overline{v}_1+\cdots+u_n\overline{v}_n$ ($u=(u_1,\ldots,u_n)$, $v=(v_1,\ldots,v_n)\in\fieldC^n$) on each fibre. $A^{\phi}_z$ and  $A^{\phi}_{\bar{z}}$ are local sections of the endomorphism bundle $\End(\underline{\fieldC}^n)$, and each is minus the adjoint of the other. $D^{\phi}:=\d+A^{\phi}$ is a unitary connection on the trivial bundle $\underline{\fieldC}^n$; in fact, it is the pull-back of the Levi-Civita connection $\U(n)$.

We write $D^{\phi}_z =\partial_z+A^{\phi}_z$ and $D^{\phi}_{\bar{z}}=\partial_{\bar{z}}+A^{\phi}_{\bar{z}}$ where $\partial_z=\partial/\partial z$ and $\partial_{\bar{z}}=\partial/\partial\bar{z}$ for a (local) complex coordinate $z$ on $M^2$. Give $\underline{\fieldC}^n$ the Koszul--Malgrange complex structure \cite{KoszulMalgrange:58}; this is the unique holomorphic structure such that a (local) section $\sigma$ of $\underline{\fieldC}^n$ is holomorphic if and only if $D^{\phi}_{\bar{z}}\sigma=0$ for any complex coordinate $z$; we shall denote the resulting holomorphic bundle by $(\underline{\fieldC}^n,D^{\phi}_{\bar{z}})$. Note that, when $\phi$ is constant, $A^{\phi} = 0$, and the Koszul-Malgrange holomorphic structure is the standard holomorphic structure on $\underline{\fieldC}^n$.

Since $A^{\phi}_z$ represents the derivative $\partial\phi/\partial z$, \emph{the map $\phi$ is harmonic if and only if the endomorphism $A^{\phi}_z$ is holomorphic}, i.e.,
\[A^{\phi}_z\circ D^{\phi}_{\bar{z}}=D^{\phi}_{\bar{z}}\circ A^{\phi}_z\,.\]
Let $\phi:M^2\to\U(n)$ be harmonic and let $\underline{\alpha}$ be a smooth subbundle of $\underline{\fieldC}^n$. We shall say that $\underline{\alpha}$ is \emph{proper} if it is neither the zero subbundle nor the full bundle $\underline{\fieldC}^n$ and we consider that $\underline{\alpha}$ is \emph{full} if is not contained in any proper subspace of $\fieldC^n$. Finally, by a \emph{uniton} or \emph{flag factor for $\phi$} we mean a smooth subbundle $\underline{\alpha}$ such that
\begin{equation}\label{Equation:UnitonConditions}
\left\{
\begin{array}{rrl}
{\rm (i)} & D^{\phi}_{\bar{z}}(\sigma) \in \Gamma(\underline{\alpha})
	&\mbox{ for all } \sigma \in \Gamma(\underline{\alpha})\,, \\
{\rm (ii)} & A^{\phi}_z(\sigma) \in \Gamma(\underline{\alpha})
	&\mbox{ for all } \sigma \in \Gamma(\underline{\alpha})\,;
\end{array} \right.
\end{equation}
here $\Gamma(\cdot)$ denotes the space of smooth sections of a bundle. These equations say that $\underline{\alpha}$ is a holomorphic subbundle of $(\underline{\fieldC}^n, D^{\phi}_{\bar{z}})$ which is closed under the endomorphism $A^{\phi}_z$.

For a subbundle $\underline{\alpha}$ of $\underline{\fieldC}^n$, let $\pi_{\alpha}$ and $\pi_{\alpha}^{\perp}$ denote orthogonal projection onto $\underline{\alpha}$ and onto its orthogonal complement $\underline{\alpha}^{\perp}$, respectively. Then \cite{Uhlenbeck:89}

\begin{proposition}
The map $\widetilde{\phi}:M^2 \to \U(n)$ given by $\widetilde{\phi}=\phi(\pi_{\alpha}-\pi_{\alpha}^{\perp})$ is harmonic if and only if $\underline{\alpha}$ is a uniton.
\end{proposition}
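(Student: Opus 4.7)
The plan is to reduce the harmonicity of $\widetilde\phi$ to an operator identity and match it directly with the uniton conditions (i)-(ii). Set $g = \pi_\alpha - \pi_\alpha^\perp$, so that $g = g^{-1} = g^*$ and $g^2 = I$. From $\widetilde\phi = \phi g$ the Maurer--Cartan form transforms as
\[A^{\widetilde\phi} \;=\; g A^\phi g \,+\, \tfrac{1}{2}\,g\,\d g.\]
Differentiating $\pi_\alpha^2 = \pi_\alpha$ yields the basic identities $\pi_\alpha(\d\pi_\alpha)\pi_\alpha = 0 = \pi_\alpha^\perp(\d\pi_\alpha)\pi_\alpha^\perp$, and combining with $\d\pi_\alpha^\perp = -\d\pi_\alpha$ gives the manifestly off-diagonal expression
\[\tfrac{1}{2}\,g\,\d g \;=\; \pi_\alpha(\d\pi_\alpha)\pi_\alpha^\perp \,-\, \pi_\alpha^\perp(\d\pi_\alpha)\pi_\alpha\]
with respect to the splitting $\underline{\fieldC}^n = \underline\alpha \oplus \underline\alpha^\perp$. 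Separating $(1,0)$- and $(0,1)$-parts then gives explicit block-matrix expressions for $A^{\widetilde\phi}_z$ and $A^{\widetilde\phi}_{\bar z}$.

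Next I rewrite the uniton conditions in operator language. Condition (ii) becomes simply $\pi_\alpha^\perp A^\phi_z \pi_\alpha = 0$. For condition (i), using $\pi_\alpha^\perp(\d\pi_\alpha) = \pi_\alpha^\perp(\d\pi_\alpha)\pi_\alpha$, one checks that $D^\phi_{\bar z}$ preserves $\Gamma(\underline\alpha)$ if and only if $\pi_\alpha^\perp D^\phi_{\bar z}\pi_\alpha = 0$ as an operator on all sections.

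The decisive step is to prove: given $\phi$ harmonic (that is, $[D^\phi_{\bar z}, A^\phi_z] = 0$), one has $[D^{\widetilde\phi}_{\bar z}, A^{\widetilde\phi}_z] = 0$ if and only if both (i) and (ii) hold. Conjugating by $g$ (which preserves the commutator) and using $g(\partial g) + (\partial g)g = 0$ (from $g^2 = I$), a short calculation gives
\[g\,D^{\widetilde\phi}_{\bar z}\,g \;=\; D^\phi_{\bar z} + \tfrac{1}{2}\,g\,\partial_{\bar z}g,\qquad g\,A^{\widetilde\phi}_z\,g \;=\; A^\phi_z - \tfrac{1}{2}\,g\,\partial_z g.\]
Expanding $[gD^{\widetilde\phi}_{\bar z}g,\,gA^{\widetilde\phi}_z g] = g[D^{\widetilde\phi}_{\bar z}, A^{\widetilde\phi}_z]g$ and killing the $[D^\phi_{\bar z}, A^\phi_z]$ term by the assumed harmonicity of $\phi$, the remainder, organized into blocks with respect to $\underline\alpha \oplus \underline\alpha^\perp$ and simplified repeatedly via $\pi_\alpha(\d\pi_\alpha)\pi_\alpha = 0$, reduces to a sum each of whose surviving terms carries $\pi_\alpha^\perp D^\phi_{\bar z}\pi_\alpha$ or $\pi_\alpha^\perp A^\phi_z\pi_\alpha$ as a factor. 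The antihermitian character of $A^\phi$ and the Hermitian compatibility of $D^\phi$ force the ``transposed'' blocks (with $\pi_\alpha$ and $\pi_\alpha^\perp$ interchanged) to vanish simultaneously with the original ones, so the commutator vanishes precisely when both (i) and (ii) hold. The converse is the same computation read backwards.

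The hard part will be the bookkeeping in this commutator expansion: with $X = \tfrac12 g\,\partial_{\bar z}g$ and $Y = -\tfrac12 g\,\partial_z g$, each of the three nonzero summands $[D^\phi_{\bar z}, Y]$, $[X, A^\phi_z]$ and $[X,Y]$ produces several off-diagonal block terms, and one must regroup them so that the vanishing of each block isolates condition (i) or (ii) without entangling them. The repeated use of $\pi_\alpha(\d\pi_\alpha)\pi_\alpha = 0$ and its $\pi_\alpha^\perp$-analogue is what makes the separation clean; beyond that, the proof is essentially algebraic.
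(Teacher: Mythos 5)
The paper offers no proof of this proposition: it is quoted from \cite{Uhlenbeck:89}, where the equivalence is established at the level of extended solutions --- $\Phi_\lambda(\pi_\alpha+\lambda\pi_\alpha^\perp)$ satisfies the extended-solution equation if and only if the uniton equations hold, the two conditions (i) and (ii) arising as coefficients of \emph{distinct powers of} $\lambda$. Measured against that, your preparatory steps are all correct: $A^{\widetilde\phi}=gA^\phi g+\frac12 g\,\d g$, the off-diagonal expression for $\frac12 g\,\d g$, the conjugated operators $g D^{\widetilde\phi}_{\bar z}g=D^\phi_{\bar z}+X$ and $g A^{\widetilde\phi}_z g=A^\phi_z+Y$, and the operator form of (i) and (ii). But your decisive step is misdescribed. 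Writing $\pi=\pi_\alpha$, $a=\pi(\partial_{\bar z}\pi)\pi^\perp$, $b=\pi^\perp(\partial_{\bar z}\pi)\pi$, $c=\pi(\partial_z\pi)\pi^\perp$, $d=\pi^\perp(\partial_z\pi)\pi$, $u=\pi^\perp D^\phi_{\bar z}\pi$, $v=\pi^\perp A^\phi_z\pi$, the residual $R=[D^\phi_{\bar z},Y]+[X,A^\phi_z]+[X,Y]$ has diagonal blocks exactly of the kind you predict (one finds $\pi R\pi=av-v^*d+cu-u^*b$), but its off-diagonal blocks contain $\mp\,\pi(\partial_{\bar z}\partial_z\pi)\pi^\perp$ and its transpose, coming from $\partial_{\bar z}Y\supset-\frac12 g\,\partial_{\bar z}\partial_z g$. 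These are genuine second-derivative terms, \emph{not} multiples of $u$ or $v$; the identities $\pi(\d\pi)\pi=0$ dispose only of the diagonal second-derivative blocks. So even the ``if'' direction needs an input your sketch omits: one must differentiate the uniton identities along $M^2$ and invoke the harmonicity of $\phi$ a second time, through $\partial_{\bar z}A^\phi_z=-[A^\phi_{\bar z},A^\phi_z]$, to convert $\pi(\partial_{\bar z}\partial_z\pi)\pi^\perp$ into admissible terms; with that extra step the forward implication does go through.

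The converse is a genuine gap, not bookkeeping: ``the same computation read backwards'' cannot succeed, because vanishing of the four blocks does not pointwise force $u=v=0$, and in fact no purely algebraic block computation can prove the map-level ``only if''. Test case: $\phi\equiv I$. Then all $A^\phi$-terms vanish, the diagonal blocks of $R$ vanish identically for \emph{every} subbundle $\underline{\alpha}$ (the terms $cu-u^*b=cb-cb$ cancel), and $R=0$ reduces to the off-diagonal blocks of $\partial_{\bar z}\partial_z\pi$, i.e.\ to $[\partial_{\bar z}\partial_z\pi,\pi]=0$, which is precisely harmonicity of $\underline{\alpha}$ as a map into the Grassmannian. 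Thus $\widetilde\phi=\pi_\alpha-\pi_\alpha^\perp$ is harmonic for every harmonic $\underline{\alpha}:M^2\to G_*(\fieldC^n)$, whereas the uniton conditions for constant $\phi$ say $\underline{\alpha}$ is a holomorphic subbundle; a non-holomorphic harmonic two-sphere in $\mathbb{CP}^2$ (e.g.\ the Gauss transform of a full holomorphic curve) gives harmonic $\widetilde\phi$ with $\underline{\alpha}$ not a uniton. The ``only if'' must therefore be understood and proved the way Uhlenbeck does --- at the level of the extended solution $\Phi_\lambda(\pi_\alpha+\lambda\pi_\alpha^\perp)$, where the coefficients of the separate powers of $\lambda$ decouple and isolate conditions (i) and (ii); that is also the form in which the present paper actually uses the proposition, via the type-one factorization \eqref{Equation:FactorizationOfTheHarmonicMapIntoUnitons}. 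Your plan, which works only at $\lambda=-1$, discards exactly the information that makes the converse true.
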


Note that $\underline{\alpha}$ is a uniton for $\phi$ if and only if $\underline{\alpha}^{\perp}$ is a uniton for $\widetilde{\phi}$; further
$\phi=-\widetilde{\phi}(\pi_{\alpha}^{\perp}-\pi_{\alpha})$\; i.e., the flag transforms defined by $\underline{\alpha}$ and $\underline{\alpha}^{\perp}$ are inverse up to sign.

Given a harmonic map $\phi$ and a uniton $\underline{\alpha}$ for $\phi$, we can characterize the holomorphic structure $D_{\bar{z}}^{\widetilde{\phi}}$ as well as the operator $A_z^{\widetilde{\phi}}$ for the new harmonic map $\widetilde{\phi}=\phi(\pi_{\alpha}-\pi_{\alpha}^\perp)$ by the simple formulae \cite{Uhlenbeck:89}
\begin{equation*}\label{uniton-DA}
A_z^{\widetilde{\phi}} = A_z^{\phi} + \partial_z\pi_{\alpha}^{\perp}\,,\quad
D_{\bar{z}}^{\widetilde{\phi}} = D_{\bar{z}}^{\phi} - \partial_{\bar{z}}\pi_{\alpha}^{\perp}.
\end{equation*}
Hence, we can also write down the uniton equations \eqref{Equation:UnitonConditions} for the harmonic map $\widetilde{\phi}$. In general, finding unitons for the harmonic map $\widetilde{\phi}$ would require to solve a $\overline{\partial}$-problem. However, the following result (\cite{FerreiraSimoesWood:09}, Theorem 1.1.) gives an explicit construction of these unitons (for a different approach, see \cite{DaiTerng:07}).

\begin{theorem}\label{Theorem:Theorem11FromFerSimWood09} For any $r\in\{0,1,\ldots, n-1\}$, let $(H_{i,j})_{0 \leq i \leq r-1,1\leq j\leq n}$ be an $r\times n$ array of $\fieldC^n$-valued meromorphic functions on $M^2$, and let $\phi_0$ be an element of $\U(n)$.
For each $i = 0,1,\ldots,r-1$, set $\underline{\alpha}_{i+1}$ equal to the subbundle of $\underline{\fieldC}^n$ spanned by the vectors
\begin{equation}\label{Equation:FirstEquationInTheorem:Theorem11FromFerSimWood09}
\alpha^{(k)}_{i+1,j}=\sum_{s=k}^{i} C^{i}_s H^{(k)}_{s-k, j}\qquad(j = 1,\ldots, n, \ k =0,1, \ldots, i).
\end{equation}
Then, the map  $\phi:M^2 \to \U(n)$ defined by
\begin{equation}\label{Equation:SecondEquationInTheorem:Theorem11FromFerSimWood09}
\phi=\phi_0 (\pi_1-\pi_1^{\perp})\cdots(\pi_r-\pi_r^{\perp})
\end{equation}
is harmonic.

Further, all harmonic maps of finite uniton number, and so all harmonic maps from $S^2$, are obtained this way.
\end{theorem}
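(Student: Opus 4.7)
The plan is to handle the two assertions separately by induction on $i$. Write $\phi_i := \phi_0 (\pi_1-\pi_1^{\perp})\cdots(\pi_i-\pi_i^{\perp})$, so $\phi = \phi_r$, and I would prove inductively that $\phi_i$ is harmonic and that $\underline{\alpha}_{i+1}$ is a uniton for $\phi_i$; the Uhlenbeck proposition recalled above will then deliver harmonicity of $\phi_{i+1}$. The base case $i=0$ is immediate since $\phi_0$ is constant, so $A^{\phi_0}_z=0$ and $D^{\phi_0}_{\bar z}=\partial_{\bar z}$, while $\underline{\alpha}_1=\wordspan\{H_{0,j}:j=1,\ldots,n\}$ is spanned by meromorphic vectors, hence both holomorphic and trivially $A^{\phi_0}_z$-invariant.

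For the inductive step I would iterate the Uhlenbeck transformation formulas
\[A_z^{\phi_{i+1}} = A_z^{\phi_i} + \partial_z \pi_{i+1}^{\perp}, \qquad D_{\bar z}^{\phi_{i+1}} = D_{\bar z}^{\phi_i} - \partial_{\bar z} \pi_{i+1}^{\perp}\]
to unwind $A^{\phi_i}_z$ and $D^{\phi_i}_{\bar z}$ in terms of $\partial_z$, $\partial_{\bar z}$ and derivatives of the projections $\pi_1^{\perp}, \ldots, \pi_i^{\perp}$. Since the $H_{\ell, j}$ are meromorphic, one has $\partial_{\bar z} H^{(k)}_{s-k,j} = 0$ and $\partial_z H^{(k)}_{s-k,j} = H^{(k+1)}_{s-k,j}$, so differentiating \eqref{Equation:FirstEquationInTheorem:Theorem11FromFerSimWood09} termwise should show, after regrouping, that both $A^{\phi_i}_z \alpha^{(k)}_{i+1,j}$ and $D^{\phi_i}_{\bar z} \alpha^{(k)}_{i+1,j}$ lie again in $\underline{\alpha}_{i+1}$, with the antiholomorphic operator pushing vectors of superscript $k$ into the span of those of superscript $k-1$.

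The main obstacle is the bookkeeping: the coefficients $C^i_s$ must be chosen precisely so that the $\partial_z \pi_\ell^{\perp}$-corrections generated by the earlier unitons combine with the termwise derivatives into vectors that remain in $\underline{\alpha}_{i+1}$. Organisationally the cleanest approach is to regard $\underline{\alpha}_{i+1}$ as a filtered bundle indexed by $k$ and verify that $D^{\phi_i}_{\bar z}$ strictly lowers this filtration while $A^{\phi_i}_z$ preserves it; the necessary identity relating $C^{i+1}_s$ with $C^{i}_s$ and $C^{i}_{s-1}$ should be a Pascal-type recursion pinning down the coefficients.

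For the completeness statement, Uhlenbeck's factorisation theorem already provides, for any harmonic map of finite uniton number, a decomposition $\phi = \phi_0(\pi_1-\pi_1^{\perp})\cdots(\pi_r-\pi_r^{\perp})$ for some unitons $\underline{\alpha}_i$; the remaining task is to realise each $\underline{\alpha}_{i+1}$ from meromorphic data. When $M^2 = S^2$, every holomorphic subbundle of the trivial bundle $(\underline{\fieldC}^n, D^{\phi_i}_{\bar z})$ admits a meromorphic frame, since holomorphic vector bundles on $S^2$ split as sums of line bundles, each possessing global meromorphic sections. Inverting the triangular linear system \eqref{Equation:FirstEquationInTheorem:Theorem11FromFerSimWood09} — solvable because the leading coefficient $C^i_i$ is non-zero — one then extracts meromorphic $H_{i,j}$ whose span under \eqref{Equation:FirstEquationInTheorem:Theorem11FromFerSimWood09} reproduces $\underline{\alpha}_{i+1}$, completing the proof.
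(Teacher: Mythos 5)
Your plan for the forward direction is essentially the paper's: induct on $i$, use Uhlenbeck's flag-transform formulae $A_z^{\phi_{i+1}}=A_z^{\phi_i}+\partial_z\pi_{i+1}^{\perp}$ and $D_{\bar z}^{\phi_{i+1}}=D_{\bar z}^{\phi_i}-\partial_{\bar z}\pi_{i+1}^{\perp}$ together with the Pascal-type identity \eqref{Equation:PascalIdentityForCis} to verify the uniton equations \eqref{Equation:UnitonConditions} for each $\underline{\alpha}_{i+1}$ relative to $\phi_i$. Two caveats, though. First, your structural picture is off in detail: the computation in \cite{FerreiraSimoesWood:09} (Proposition 2.4 there, quoted in this paper) shows that the spanning sections $\alpha^{(k)}_{i+1,j}$ are actually $D^{\phi_i}_{\bar z}$-\emph{holomorphic} (annihilated, not merely pushed into lower filtration level), and that $A^{\phi_i}_z$ \emph{raises} the superscript, $A^{\phi_i}_z(\alpha^{(k)}_{i+1,j})=-\alpha^{(k+1)}_{i+1,j}$, killing the top level — the opposite of the monotonicity you describe. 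Second, the phrase ``differentiating termwise should show, after regrouping'' is precisely the nontrivial computation that constitutes the proof; as a proposal it is a reasonable plan, but nothing is actually verified.

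The genuine gap is in the completeness statement. Your recovery of the meromorphic data by ``inverting the triangular linear system \eqref{Equation:FirstEquationInTheorem:Theorem11FromFerSimWood09}, solvable because the leading coefficient $C^i_i$ is non-zero'' fails: $C^i_i=\pi_i^{\perp}\cdots\pi_1^{\perp}$ is a composition of orthogonal projections and is very far from invertible, so the system cannot be inverted; worse, even a formal solution would express the $H_{i,j}$ through the projections $\pi_l^{\perp}$, which depend only \emph{smoothly} on $z$, so nothing guarantees the recovered $H_{i,j}$ are meromorphic — that is exactly the $\overline{\partial}$-problem this circle of ideas is designed to circumvent. Relatedly, producing a meromorphic frame of each $\underline{\alpha}_{i+1}$ separately (which Grothendieck splitting gives on $S^2$) is much weaker than the theorem's claim: one needs a \emph{single} array $(H_{i,j})$ whose rows, differentiated and weighted by the $C^i_s$ built from the \emph{earlier} unitons, span all the $\underline{\alpha}_{i+1}$ simultaneously. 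The paper's route (deferring to \cite{FerreiraSimoesWood:09}) is different and is needed here: pass to the unique type-one polynomial extended solution, whose factorization \eqref{Equation:FactorizationOfTheHarmonicMapIntoUnitons} has proper unitons satisfying the covering condition \eqref{Equation:CoverCondition} with $\underline{\alpha}_1$ full, and then prove inductively — using the covering condition and the identities (i),(ii) above — that the unitons of such a factorization necessarily take the form \eqref{Equation:FirstEquationInTheorem:Theorem11FromFerSimWood09} with meromorphic $H$'s. Note also that the theorem asserts completeness for all finite-uniton-number maps from an arbitrary $M^2$, not just $M^2=S^2$, so tying the frame argument to $S^2$ additionally narrows the claim.
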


In the above result, by a \emph{$\fieldC^n$-valued meromorphic function} or \emph{meromorphic vector} $H$ on $M^2$, it is simply meant an $n$-tuple of meromorphic functions; its $k$'th derivative with respect to some local complex coordinate on $M^2$ is denoted by $H^{(k)}$. Also, $\pi_i$ denotes $\pi_{\underline{\alpha}_i}$ whereas $\pi_i^{\perp}$ stands for $\pi_{\underline{\alpha}_i^\perp}$. Moreover, for integers $i$ and $s$ with $0\leq s\leq i$, $C^i_s$ denotes the \emph{$s$'th elementary function} of the projections $\pi_i^{\perp}, \ldots, \pi_1^{\perp}$ given by
\begin{equation}\label{Equation:DefinitionOfCis}
C^i_s=\sum_{1\leq i_1<\cdots<i_s\leq i}\pi_{i_s}^{\perp}\cdots\pi_{i_1}^{\perp}.
\end{equation}
$C^i_s$ denotes the identity when $s = 0$ and zero when $s<0$ or $s>i$. Note that the $C^i_s$ satisfy a property like that for Pascal's triangle
\begin{equation}\label{Equation:PascalIdentityForCis}
C^i_s=\pi_i^{\perp} C^{i-1}_{s-1}+ C^{i-1}_s\quad(i\geq 1,0\leq s\leq i).
\end{equation}
Moreover, the unitons $\underline{\alpha}_i$ satisfy the \emph{covering condition}
\begin{equation}\label{Equation:CoverCondition}
\pi_{i}\underline{\alpha}_{i+1}=\underline{\alpha}_{i}  \qquad (i = 1,\ldots,r-1).
\end{equation}

We quickly review the main steps in the proof of the above theorem (for more details, we refer the reader to \cite{FerreiraSimoesWood:09}). To see that the map $\phi$ in \eqref{Equation:SecondEquationInTheorem:Theorem11FromFerSimWood09} is harmonic, all one has to do is to check that the successive bundles $\underline{\alpha}_{i+1}$ satisfy equations \eqref{Equation:UnitonConditions} for each of the harmonic maps $\phi_i$, where
\[\phi_i=\phi_0 (\pi_1-\pi_1^{\perp})\cdots(\pi_i-\pi_i^{\perp}).\]
This follows from an explicit calculation showing that (\cite{FerreiraSimoesWood:09}, Proposition 2.4.):

(i) $\alpha_{i+1,j}^{(k)}$ are holomorphic sections of\/ $(\underline{\fieldC}^n, D^{\phi_i}_{\bar{z}})$ and

(ii) $A^{\phi_i}_z(\alpha_{i+1,j}^{(k)})
	= \left\{\begin{array}{cl}
		-\alpha_{i+1,j}^{(k+1)}\,, & \mbox{ if \ $k < i+1$\,$,$} \\
			0\,, 		&  \mbox{ if \ $k=i+1$\,$;$}
	\end{array}\right.$

As for the converse, one needs to develop further the theory. Let $A^\phi$ be as in \eqref{Equation:DefinitionOfA} and set
\[A^{\lambda}=\textstyle{\frac{1}{2}}(1-\lambda^{-1})A^\phi_z\d z+\textstyle{\frac{1}{2}}(1-\lambda)A^\phi_{\bar{z}}\d\bar{z}\qquad(\lambda\in S^1).\]
It is well-known that the harmonicity of $\phi$ implies the integrability of $A^\lambda$ and we can therefore find, at least locally, an $S^1$-family of smooth maps $\Phi=\Phi_{\lambda}:M^2 \to \U(n)$ with
\[\textstyle{\frac{1}{2}}\Phi_{\lambda}^{-1}\d\Phi_{\lambda}=A^{\lambda}\qquad(\lambda\in S^1)\qquad\text{and}\qquad\Phi_1(z)=I\text{ for all }z\in M^2,\]
where $I$ is the identity matrix. We say that $\Phi=\Phi_{\lambda}:M^2\to\U(n)$ is an \emph{extended solution} \cite{Uhlenbeck:89} (for $\phi$) and it is clear that $\Phi$ can be interpreted as a map into a loop group.

Note that any two extended solutions for a harmonic map differ by a function (`constant loop') $Q:S^1\to\U(n)$ with $Q(1)=1$. Further, $\Phi_{-1}$ is left-equivalent to $\phi$, i.e., $\Phi_{-1}=Q\phi$ for some constant $Q\in\U(n)$.

Let $\gl(n,\fieldC)$ denote the Lie algebra of $n \times n$ matrices; this is the complexification of $\u(n)$. The extended solution extends to a family of maps $\Phi_{\lambda}:M^2 \to \gl(n,\fieldC)$ with $\Phi_{\lambda}$ a holomorphic function of $\lambda\in\fieldC\setminus\{0\}$. Hence it can be expanded as a Laurent series, $\Phi=\sum_{i=-\infty}^{\infty}\lambda^iT_i$, where each $T_i=T_i^{\Phi}$ is a smooth map from $M^2$ to $\gl(n,\fieldC)$.

A harmonic map $\phi:M^2\to\U(n)$ is said to be \emph{of finite uniton number} if it has a \emph{polynomial} extended solution
\begin{equation}\label{Equation:PolinomialExtendedSolution}
\Phi=T_0+\lambda T_1+\cdots+\lambda^r T_r.
\end{equation}
The \emph{(minimal) uniton number} of $\phi$ is the least degree of all its polynomial extended solutions. In general, given a harmonic map $\phi$ (with finite uniton number $r$) there is not a unique corresponding extended solution $\Phi$ with degree $r$. Nevertheless, if one further imposes that the subbundle $\underline{\image}T_0$ is full, uniqueness is achieved \cite{Uhlenbeck:89}. Such extended solutions have a unique factorization
\begin{equation}\label{Equation:FactorizationOfTheHarmonicMapIntoUnitons}
\Phi=(\pi_1+\lambda\pi_1^{\perp}) \cdots (\pi_r+\lambda\pi_r^{\perp})
\end{equation}
where $\underline{\alpha}_1,...,\underline{\alpha}_r$ are proper unitons satisfying the covering condition \eqref{Equation:CoverCondition} and $\underline{\alpha}_1$ is full; these will be called \emph{type-one} extended solutions. One can then prove that each of these subbundles $\underline{\alpha}_i$ is of the form stated in Theorem \ref{Theorem:Theorem11FromFerSimWood09}.

\begin{example}\label{Example:MapsIntoU3}\cite{FerreiraSimoesWood:09}
Let $\phi:M^2 \to \U(3)$ be a non-constant harmonic map of finite uniton number. Then, \emph{either}

(a) it has uniton number one and is given by a holomorphic map $\phi:M^2\to G_{d_1}(\fieldC^3)$ where $d_1=1$ or $2$; \emph{or}

(b) it has uniton number two and is given by \eqref{Equation:FirstEquationInTheorem:Theorem11FromFerSimWood09} with unitons
$\underline{\alpha}_1$, $\underline{\alpha}_2$ of rank one and two respectively and $\underline{\alpha}_1$ full.
The data of Theorem \ref{Theorem:Theorem11FromFerSimWood09} consists of maps $H_{0,1}$ and $H_{1,1}$. Then, since $A_z^{\phi_1}(H_{0,1})=-\pi_1^\perp H_{0,1}^{(1)}$,
\begin{equation}\label{Equation:Rank12}
\underline{\alpha}_1=\wordspan\{H_{0,1}\}\quad\text{and}\quad\underline{\alpha}_2=\wordspan\{H_{0,1}+\pi_1^{\perp}H_{1,1},\pi_1^{\perp}H_{0,1}^{(1)}\}.
\end{equation}
\end{example}

\section{Harmonic maps into $G_*(\fieldC^n)$}\label{Section:HarmonicMapsFromHolomorphicData}



\subsection{Explicit construction}\label{Subsection:ExplicitConstruction}


$\text{ }$

For any $p\in\{0,1,\ldots, n\}$, let $G_p(\mathbb C^n)$ denote the \emph{complex Grassmannian} of $k$-dimensional subspaces of $\mathbb C^n$ equipped with its standard structure as a Hermitian symmetric space. It is convenient to denote the disjoint union $\cup_{p=0}^n G_p(\mathbb C^n)$ by $G_*(\mathbb C^n)$. In the sequel, we always identify a map into $G_p(\mathbb C^n)$ with the pull-back of the corresponding tautological bundle. As it is well-known, $G_p(\mathbb C^n)$ sits totally geodesically in $\U(n)$ via the \emph{Cartan embedding} $\iota(F)=\pi_F-\pi_F^\perp$.
The formulae in Theorem \ref{Theorem:Theorem11FromFerSimWood09} gives all harmonic maps from $S^2$ into $G_*(\mathbb C^n)$, although it does not tell us \emph{how to choose the holomorphic data $H_{i,j}$} in order to guarantee that the resulting map $\phi$ lies in $G_*(\mathbb C^n)$. On the other hand the situation is now somehow different, in the sense that left multiplication by a constant map $Q$ does not, in general, preserve the image in $G_*(\mathbb C^n)$ \cite{Uhlenbeck:89}. Therefore the classification up to left multiplication is no longer suitable in this setting.

\begin{example}\label{Example:MapsIntoGAst3}
Let $\phi:M^2\to G_*(\mathbb C^n)$ be a non-constant harmonic map of uniton number one. Then, if $\phi$ is not holomorphic, it must be of the form

$\phi=(\pi_{F_0}-\pi_{F_0}^\perp)(\pi_1-\pi_1^\perp)$.

It is easily seen that $\phi$ is $G_*(\mathbb C^n)$-valued if, and only if, $\pi_{F_0}$ and $\pi_1$ commute; equivalently, $F_0$ \emph{decomposes} $\underline{\alpha}_1$; i.e.,
\[\underline{\alpha}_1= \underline{\alpha}_1\cap F_0\oplus \underline{\alpha}_1 \cap F_0^\perp.\]
In that case, we can easily check that
\[\phi=\pi_{F_1}-\pi_{F_1}^\perp\]
where
\begin{equation}\label{Equation:DefinitionOfF1}
\underline{F}_1=\underline{\alpha}_1\cap F_0\oplus\underline{\alpha}_1^\perp\cap F_0^\perp.
\end{equation}
Notice that if $F_0$ is not trivial and $\underline{\alpha}_1$ is full, then it must be that rank $\underline{\alpha}_1\geq2$. Moreover, in that case, $\phi_1$ decomposes into $\phi_1\cap\underline{\alpha}_1$ and $\phi_1\cap\underline{\alpha}_1^\perp$ which are, respectively, holomorphic and anti-holomorphic subbundles of $(\underline{\mathbb C}^n,\partial_{\bar{z}})$.
\end{example}

\begin{example}
A harmonic map $\phi: M^2 \rightarrow G_*(\mathbb C^n)$ with uniton number $2$ can be written as $\phi=(\pi_{F_0}-\pi_{F_0^\perp})(\pi_1-\pi_1^{\perp})(\pi_2-\pi_2^\perp)$, where $F_0$ is a complex subspace of $\mathbb C^n$ and $\underline{\alpha}_1$ is full. From (\cite{Uhlenbeck:89}, Theorem 15.3) we know that $\phi_1=(\pi_{F_0}-\pi_{F_0^{\perp}})(\pi_1-\pi_1^\perp)$ must be also Grassmannian-valued. As in Example \ref{Example:MapsIntoGAst3}, $F_0$ splits $\underline{\alpha}_1$ and $\phi_1=\pi_{F_1}-\pi_{F_1^\perp}$, where $\underline{F}_1$ is given by \eqref{Equation:DefinitionOfF1}. Again, since $\phi$ has values in $G_*(\mathbb C^n)$, $\pi_2$ and $\pi_{F_1}$ commute which implies that $\underline{F}_1$ splits $\underline{\alpha}_2$ and $\phi=\pi_{F_2}-\pi_{F_2}^{\perp}$, where
\begin{equation}\label{Equation:DefinitionOfF2}
\underline{F}_2= \underline{\alpha}_2 \cap \underline{F}_1 \oplus \underline{\alpha}_2^{\perp} \cap \underline{F}_1^{\perp}.
\end{equation}

When $F_0$ is trivial (i.e. $F_0 = \mathbb C^n$ or $F_0 = \{0\}$), it is easily seen, from the covering condition and the fact that $\pi_1$ and $\pi_2$ commute, that $\underline{\alpha}_1 \subset \underline{\alpha}_2$. Hence, according to Theorem \ref{Theorem:Theorem11FromFerSimWood09},
\[\begin{array}{l}
\underline{\alpha}_1=\wordspan\{H_{0,1},...,H_{0,r}\}\\
\underline{\alpha}_2=\wordspan\{H_{0,1},..., H_{0,r},\pi_1^\perp H_{0,1}^{(1)},...,\pi_1^\perp H_{0,r}^{(1)}\},
\end{array}\]
for some meromorphic data $H_{0,1},...,H_{0,r}$.

Assume now that $F_0$ is not trivial and choose meromorphic data $\{L_{0,i}\}_{1\leq i\leq r}$ in $F_0$ and
$\{E_{0,j}\}_{1\leq j\leq l}$ in $F_0^{\perp}$ to span $\underline{\alpha}_1$. From Theorem \ref{Theorem:Theorem11FromFerSimWood09} we know that
\[\begin{array}{l}
\underline{\alpha}_1 =\wordspan\{L_{0,i},E_{0,j}\}_{(1\leq i\leq r,~1\leq j\leq l)}\\
\underline{\alpha}_2 =\wordspan \{L_{0,i}+\pi_1^\perp H_{1,i}, E_{0,j}+\pi_1^\perp H_{1,r+j}, \pi_1^\perp L_{0,i}^{(1)},\pi_1^\perp E_{0,j}^{(1)}\}_{1\leq i\leq r,~1\leq j\leq l},
\end{array}\]
where the $\{H_{1,s}\}_{1\leq s\leq r+l}$ are $\mathbb C^n$-valued meromorphic functions.

It is easily seen that if the $H_{1,i}$ ($1\leq i\leq r$) lie in $F_0^\perp$ and the $H_{1,r+j}$ ($1\leq j \leq l$) lie in $F_0$ then $\phi_1=(\pi_{F_0}-\pi_{F_0^\perp})(\pi_1-\pi_1^\perp)$ commutes with $\pi_2$. As we shall see, eventually rearranging indexes, $\alpha_2$ must be given this way.

Notice that, in the decomposition of $\underline{F}_2$ given by \eqref{Equation:DefinitionOfF2}, $\underline{\alpha}_2 \cap F_1$ is a holomorphic subbundle of $(\mathbb C^n, D_{\overline{z}}^{\phi_1})$, since it is spanned by the sections $\{L_{0,i}+\pi_1^\perp H_{1,i}\}$ and $\{E_{0,j}+\pi_1^\perp H_{1,r+j}\}$ ($1\leq i\leq r, ~1\leq j\leq l$), which are holomorphic sections of that bundle. As we shall see later on, $\underline{\alpha}_2^{\perp} \cap F_1^{\perp}$ is a anti-holomorphic subbundle of the same bundle.
\end{example}

One of the main ingredients to develop the theory when dealing with harmonic maps into $G_*(\mathbb C^n)$ is the following result, already suggested by the previous examples.

\begin{proposition}\label{Proposition:DecompositionIntoHoloAndAntiHoloPartsOfHarmonicMapsIntoGStarCn} Let $\phi:M^2\to G_*(\mathbb C^n)$ be a harmonic map and $\underline{\alpha}$ a uniton for $\phi$. Then, the harmonic map $\tilde{\phi}=\phi(\pi_{\alpha}-\pi_{\alpha}^\perp)$ is $G_*(\mathbb C^n)$-valued if, and only if, $\phi$ splits $\underline{\alpha}$. In that case, $\tilde{\phi}=\phi\cap\underline{\alpha}\oplus\phi^\perp\cap\underline{\alpha}^\perp$, where $\phi\cap\underline{\alpha}$ and $\phi^\perp\cap\underline{\alpha}^\perp$ are, respectively, holomorphic and anti-holomorphic subbundles of $(\mathbb C^n,D_{\bar{z}}^\phi)$.
\end{proposition}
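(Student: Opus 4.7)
The plan is to treat the equivalence and the decomposition formula as an algebraic consequence of the Cartan embedding, and then to reduce the holomorphicity assertions to a single key preservation property enjoyed by $D^\phi_{\bar z}$ whenever $\phi$ is Grassmannian-valued.

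For the equivalence, I would note that $\phi = \pi_\phi - \pi_\phi^\perp$ and $\pi_\alpha-\pi_\alpha^\perp$ are Hermitian involutions, so $\tilde\phi$ is automatically unitary and lies in the Cartan embedding of $G_*(\fieldC^n)$ precisely when it is self-adjoint, equivalently when $\phi$ commutes with $\pi_\alpha-\pi_\alpha^\perp$, equivalently $[\pi_\phi,\pi_\alpha]=0$. This last condition is exactly the splitting $\underline\alpha=(\phi\cap\underline\alpha)\oplus(\phi^\perp\cap\underline\alpha)$. Under this commutation, expanding
\[\tilde\phi=(\pi_\phi-\pi_\phi^\perp)(\pi_\alpha-\pi_\alpha^\perp)=\pi_\phi\pi_\alpha-\pi_\phi\pi_\alpha^\perp-\pi_\phi^\perp\pi_\alpha+\pi_\phi^\perp\pi_\alpha^\perp\]
and observing that each of the four products is the orthogonal projection onto the corresponding intersection (since $\pi_\phi$ and $\pi_\alpha$ commute) gives $\tilde\phi=\pi_{\tilde V}-\pi_{\tilde V}^\perp$ with $\tilde V=(\phi\cap\underline\alpha)\oplus(\phi^\perp\cap\underline\alpha^\perp)$, which is the asserted decomposition of $\tilde\phi$ as a subbundle.

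The crux of the second statement is the observation that for any $G_*(\fieldC^n)$-valued harmonic map $\phi$, the subbundles $\phi$ and $\phi^\perp$ are both preserved by $D^\phi_{\bar z}$. For a section $\sigma\in\Gamma(\phi)$, differentiating $\pi_\phi\sigma=\sigma$ yields $(\partial_{\bar z}\pi_\phi)\sigma=\pi_\phi^\perp\partial_{\bar z}\sigma$, while $A^\phi_{\bar z}=(\pi_\phi-\pi_\phi^\perp)\partial_{\bar z}\pi_\phi$ (using $\phi^{-1}=\phi$ for Cartan embeddings) therefore gives $A^\phi_{\bar z}\sigma=-\pi_\phi^\perp\partial_{\bar z}\sigma$, so that $D^\phi_{\bar z}\sigma=\pi_\phi\partial_{\bar z}\sigma\in\Gamma(\phi)$; an identical computation applies to $\phi^\perp$. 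Since $D^\phi$ is unitary, dualising further shows that $\phi$ and $\phi^\perp$ are also preserved by $D^\phi_z$, so both are simultaneously holomorphic and anti-holomorphic in $(\underline{\fieldC}^n,D^\phi_{\bar z})$.

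With these preparations the remaining assertions are immediate. If $\sigma\in\Gamma(\phi\cap\underline\alpha)$, then $D^\phi_{\bar z}\sigma\in\Gamma(\underline\alpha)$ by the uniton condition and $D^\phi_{\bar z}\sigma\in\Gamma(\phi)$ by the preceding paragraph, so $\phi\cap\underline\alpha$ is a holomorphic subbundle of $(\underline{\fieldC}^n,D^\phi_{\bar z})$. For the anti-holomorphic part, the uniton condition together with unitarity of $D^\phi$ implies $D^\phi_z(\underline\alpha^\perp)\subset\underline\alpha^\perp$; combined with $D^\phi_z(\phi^\perp)\subset\phi^\perp$ this yields that $\phi^\perp\cap\underline\alpha^\perp$ is preserved by $D^\phi_z$, hence anti-holomorphic. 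The main delicacy throughout is the preservation of $\phi$ by $D^\phi_{\bar z}$: $A^\phi_{\bar z}$ itself swaps $\phi$ with $\phi^\perp$, yet its action on $\Gamma(\phi)$ exactly cancels the $\phi^\perp$-component of $\partial_{\bar z}$, a cancellation peculiar to the Grassmannian setting.
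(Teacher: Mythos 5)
Your proposal is correct, and it takes a genuinely different route from the paper at the key step. For the equivalence and the formula $\tilde\phi=\pi_{\tilde V}-\pi_{\tilde V}^{\perp}$ with $\tilde V=(\phi\cap\underline{\alpha})\oplus(\phi^{\perp}\cap\underline{\alpha}^{\perp})$, you and the paper use essentially the same algebra: the paper cites Uhlenbeck for the criterion that $\tilde\phi$ is Grassmannian-valued iff $\pi_{\phi}$ and $\pi_{\alpha}$ commute, and your self-adjointness argument is just the standard proof of that criterion. The divergence is in the (anti-)holomorphicity assertions. The paper proves that $\phi\cap\underline{\alpha}$ is holomorphic by invoking Proposition \ref{Proposition:GivingTheRightHsInTheGeneralCase}, i.e.\ by exhibiting an explicit $D^{\phi}_{\bar z}$-holomorphic spanning set built from the meromorphic data $H_{i,j}$ of the uniton construction, and only then handles the anti-holomorphic half via $D^{\phi}_{z}\underline{\alpha}^{\perp}\subseteq\underline{\alpha}^{\perp}$ and the unitarity pairing $\langle D^{\phi}_{z}(\phi^{\perp}\cap\underline{\alpha}^{\perp}),\,\phi\cap\underline{\alpha}^{\perp}\rangle=-\langle \phi^{\perp}\cap\underline{\alpha}^{\perp},\,D^{\phi}_{\bar z}(\phi\cap\underline{\alpha}^{\perp})\rangle=0$. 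You instead prove directly that $D^{\phi}_{\bar z}$ and $D^{\phi}_{z}$ preserve both $\underline{F}_{\phi}$ and $\underline{F}_{\phi}^{\perp}$; your computation $D^{\phi}_{\bar z}\sigma=\pi_{\phi}\partial_{\bar z}\sigma$ is correct (note $A^{\phi}_{\bar z}=\frac12\phi^{-1}\partial_{\bar z}\phi=(\pi_{\phi}-\pi_{\phi}^{\perp})\partial_{\bar z}\pi_{\phi}$, the factor $2$ from $\partial_{\bar z}(\pi_{\phi}-\pi_{\phi}^{\perp})=2\partial_{\bar z}\pi_{\phi}$ cancelling the $\frac12$) and amounts to the standard fact that the pull-back of the Levi-Civita connection under the Cartan embedding is the direct sum of the induced connections on $\underline{F}_{\phi}$ and $\underline{F}_{\phi}^{\perp}$; intersecting with the uniton conditions ($\underline{\alpha}$ closed under $D^{\phi}_{\bar z}$, hence $\underline{\alpha}^{\perp}$ closed under $D^{\phi}_{z}$ by unitarity, exactly as the paper also uses) then gives both halves at once. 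What your route buys is self-containedness: the paper's proof makes this proposition logically downstream of the explicit construction machinery, even though it is stated beforehand as motivation for that machinery, whereas your argument is purely local, works for an arbitrary uniton of an arbitrary Grassmannian-valued harmonic map, and in fact proves slightly more, namely that $\underline{F}_{\phi}$ and $\underline{F}_{\phi}^{\perp}$ are simultaneously holomorphic and anti-holomorphic in $(\underline{\fieldC}^{n},D^{\phi}_{\bar z})$.
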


In the case of harmonic maps $\phi:M^2 \rightarrow \U(n)$, the holomorphic data $H_{i,j}$ of Theorem \ref{Theorem:Theorem11FromFerSimWood09} could be freely chosen. We may inquire which conditions we must impose to the $H_{i,j}$ to get $\phi(M^2)\subseteq G_k(\mathbb C^n)$. The preceding proposition indicates that the splitting idea must be present in the initial data in order to obtain Grassmannian-valued harmonic maps.

\begin{definition}
Let $F_0$ be a constant subspace in $\mathbb C^n$. An $r\times n$ \emph{$F_0$-array} is a family of meromorphic $\mathbb C^n$-valued functions, $(K_{i,j})_{0\leq i\leq r-1, 1\leq j\leq n}$ such that, for each $j$, either
\begin{equation}
\begin{array}{l}
\pi_{F_0^\perp}(K_{2k,j})=0\text{ and }\pi_{F_0}(K_{2k+1,j})=0\text{, for all $0\leq k\leq\frac{r-1}{2}$ or}\\
\pi_{F_0}(K_{2k,j})=0\text{ and }\pi_{F_0^\perp}(K_{2k+1,j})=0\text{, for all $0\leq k\leq\frac{r-1}{2}$.}
\end{array}
\end{equation}
\end{definition}

\begin{theorem}\label{Theorem:MainTheoremNonBundleVersion}
Let $F_0$ be a constant subspace in $\mathbb C^n$, $r\in\{0,1,...,n-1\}$ and $(K_{i,j})_{0\leq i\leq r-1,1\leq j\leq n}$ be an $r\times n$ $F_0$-array of $\mathbb C^n$-valued meromorphic functions on $M^2$. For each $j$, consider the meromorphic functions
\begin{equation}\label{Equation:FirstEquationInTheorem:MainTheoremNonBundleVersion}
\begin{array}{l}
H_{0,j}=K_{0,j}\text{ and}\\
H_{i,j}=\displaystyle\sum_{s=1}^i(-1)^{s+i}\binom{i-1}{s-1}K_{s,j}\text{,  }i\geq 1.
\end{array}
\end{equation}
For each $0\leq i\leq r-1$, set $\underline{\alpha}_{i+1}$ equal to the subbundle of $\mathbb C^n$ spanned by the vectors
\[\alpha_{i+1,j}^{(k)}=\sum_{s=k}^i C^i_s H_{s-k,j}^{(k)},\,(j=1,...,n,\,k=0,...,i).\]
Then, the map $\phi:M^2\rightarrow U(n)$ defined by
\[\phi=(\pi_{F_0}-\pi_{F_0}^\perp)(\pi_{1}-\pi_{1}^\perp)...(\pi_{r}-\pi_{r}^\perp)\]
is harmonic.

Further, all harmonic maps from $M^2$ to $G_*(\mathbb C^n)$ of finite uniton number, and so harmonic maps from $S^2$ to $G_*(\mathbb C^n)$, are obtained this way.
\end{theorem}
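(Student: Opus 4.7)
The plan is to prove the theorem in two parts, with the forward direction as the technical heart and the converse essentially a bookkeeping inversion. Both rest on iterated use of Proposition \ref{Proposition:DecompositionIntoHoloAndAntiHoloPartsOfHarmonicMapsIntoGStarCn}. For the forward direction, harmonicity is immediate from Theorem \ref{Theorem:Theorem11FromFerSimWood09} applied with $\phi_0 = \pi_{F_0} - \pi_{F_0}^\perp \in \U(n)$, since each $H_{i,j}$ is a $\mathbb{C}$-linear combination of the meromorphic $K_{s,j}$'s and so itself meromorphic. To produce Grassmannian values I would induct on $i$, showing that each partial product $\phi_i = \phi_0(\pi_1-\pi_1^\perp)\cdots(\pi_i-\pi_i^\perp)$ equals $\pi_{F_i} - \pi_{F_i}^\perp$ for a smooth subbundle $\underline{F}_i$ that splits $\underline{\alpha}_{i+1}$, so that the Proposition yields $\phi_{i+1} = \pi_{F_{i+1}} - \pi_{F_{i+1}}^\perp$ with $\underline{F}_{i+1} = (\underline{F}_i \cap \underline{\alpha}_{i+1}) \oplus (\underline{F}_i^\perp \cap \underline{\alpha}_{i+1}^\perp)$.

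The strengthening of the inductive statement I would formulate in advance is that each generator $\alpha_{i+1,j}^{(k)}$ lies in $\underline{F}_i$ when $k + \epsilon_j$ is even and in $\underline{F}_i^\perp$ when $k + \epsilon_j$ is odd, where $\epsilon_j \in \{0,1\}$ records which of the two alternatives in the $F_0$-array condition the column $(K_{\cdot,j})$ satisfies. This immediately gives the splitting $\underline{\alpha}_{i+1} = (\underline{\alpha}_{i+1} \cap \underline{F}_i) \oplus (\underline{\alpha}_{i+1} \cap \underline{F}_i^\perp)$. The base case $i=0$ is trivial since $\alpha_{1,j}^{(0)} = K_{0,j}$ lies in $F_0$ or $F_0^\perp$ by the $F_0$-array condition. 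The inductive step is the key calculation: recognizing $H_{i,j} = \sum_{s=1}^i (-1)^{s+i}\binom{i-1}{s-1} K_{s,j}$ as the $(i-1)$-fold backward finite difference of $(K_{s,j})_{s\geq 1}$, I would substitute it into $\alpha_{i+1,j}^{(k)} = \sum_{s=k}^i C^i_s H^{(k)}_{s-k,j}$ and expand the $C^i_s$ via the Pascal-type identity \eqref{Equation:PascalIdentityForCis}. Each $\pi_u^\perp$ appearing in the expansion commutes with $\pi_{F_{u-1}}$ because the inductive hypothesis guarantees that $\underline{F}_{u-1}$ splits $\underline{\alpha}_u$, so the iterated projections respect the recursive $\underline{F}_u$ decomposition; derivatives preserve this structure because $F_0$ is constant. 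The binomial coefficients in the $K \to H$ transformation provide precisely the cancellations needed for the resulting combinatorial sum to reassemble into a pure element of $\underline{F}_i$ or $\underline{F}_i^\perp$, matching the predicted parity.

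For the converse, given a Grassmannian-valued $\phi$ of finite uniton number I would take its type-one extended solution, which as recalled in the introduction gives $\phi = (\pi_{F_0}-\pi_{F_0}^\perp)(\pi_1-\pi_1^\perp)\cdots(\pi_r-\pi_r^\perp)$ via the factorization \eqref{Equation:FactorizationOfTheHarmonicMapIntoUnitons}. Theorem \ref{Theorem:Theorem11FromFerSimWood09} then yields meromorphic $H_{i,j}$ generating the successive unitons, and since the $K \to H$ transformation is unipotent and lower-triangular in the row index the $K_{i,j}$ are uniquely determined by inversion. By Uhlenbeck's result invoked in the preceding examples, every intermediate $\phi_i$ is also Grassmannian-valued, so Proposition \ref{Proposition:DecompositionIntoHoloAndAntiHoloPartsOfHarmonicMapsIntoGStarCn} forces $\underline{F}_i$ to split $\underline{\alpha}_{i+1}$ at every stage; unwinding these splitting constraints through the inverse binomial formula forces the $K_{i,j}$ to satisfy exactly the alternation required by the $F_0$-array condition.

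The principal obstacle is the inductive step in the forward direction: identifying the correct parity statement and verifying that the Pascal expansion of $C^i_s$, combined with the precise binomial coefficients in the $K \to H$ formula, produces exactly the combinatorial cancellations needed to align $\alpha_{i+1,j}^{(k)}$ with a single side of the nested $\underline{F}_i$-decomposition. The specific form of the $K \to H$ transformation is indispensable here; a generic unipotent change of variables would fail to preserve the splitting structure inductively.
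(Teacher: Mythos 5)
Your forward direction is essentially the paper's route: invert the transformation \eqref{Equation:FirstEquationInTheorem:MainTheoremNonBundleVersion} to $K_{i,j}=\sum_{s=1}^i\binom{i-1}{s-1}H_{s,j}$, then prove by induction the parity statement that $\alpha_{i+1,j}^{(k)}$ lies in $\underline{F}_i$ or $\underline{F}_i^\perp$ according to $k$ and the alternative the $j$'th column satisfies --- this is exactly Proposition \ref{Proposition:GivingTheRightHsInTheGeneralCase}, which the paper proves by converting the $C^i_s$ into the ordered products $S^i_j$ via $C^i_k=\sum_s\binom{s}{k}S^i_s$ and invoking Lemma \ref{Lemma:ProjectionsOfSijGeneralCase}. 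One flaw in your version: your justification for propagating the parity in $k$ (``derivatives preserve this structure because $F_0$ is constant'') is not correct, since the bundles $\underline{F}_i$ for $i\geq 1$ are genuinely non-constant and plain differentiation does not preserve them. The actual mechanism is $\alpha_{i+1,j}^{(k)}=-A_z^{\phi_i}\bigl(\alpha_{i+1,j}^{(k-1)}\bigr)$ together with Corollary \ref{Corollary:CorollaryToLemma:TheGeneralCase}, which says $A_z^{\phi_i}$ interchanges $\underline{F}_i$ and $\underline{F}_i^\perp$; this is easily repaired but must be said.

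The genuine gap is in your converse. You claim that, after factorizing $\phi$ through its type-one extended solution and extracting data $H_{i,j}$ from Theorem \ref{Theorem:Theorem11FromFerSimWood09}, ``the $K_{i,j}$ are uniquely determined by inversion'' and the splitting constraints ``force the $K_{i,j}$ to satisfy exactly the alternation.'' This is false as stated: the unitons $\underline{\alpha}_i$ are unique, but the meromorphic spanning data for them is not, and the condition that $\underline{F}_i$ splits $\underline{\alpha}_{i+1}$ is a statement about the \emph{bundle}, not about each spanning vector. Already for $r=1$: take $L\in F_0$, $E\in F_0^\perp$ and present $\underline{\alpha}_1=\wordspan\{L+E,\,L-E\}$; the bundle splits, yet $K_{0,1}=H_{0,1}=L+E$ violates the $F_0$-array condition, so inverting an arbitrary admissible choice of $H$-data does not produce an $F_0$-array. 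What the converse actually requires --- and what occupies most of the paper's proof --- is a re-choice of the data at each inductive level: one shows (using Lemma \ref{Lemma:ProjectionsOfSijGeneralCase} and the inductive hypothesis that the lower rows are already adapted) that $\pi_{F_r}^{\perp}\bigl(\alpha_{r+1,j}^{(0)}\bigr)=C^r_r\,\pi_{F_0}\sum_{s=0}^{r-1}\binom{r-1}{s}H_{s+1,j}$ again lies in $\underline{\alpha}_{r+1}$, then sets $\tilde H_{r,j}=\pi_{F_0}\sum_{s=0}^{r-1}\binom{r-1}{s}H_{s+1,j}$, replaces $H_{r,j}$ by $\hat H_{r,j}=H_{r,j}-\tilde H_{r,j}$, adjoins a new column carrying $\tilde H_{r,j}$ alone, and verifies that the \emph{modified} data satisfies the adaptation conditions. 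Without this projection-and-split step your converse does not go through; the inverse binomial formula alone cannot manufacture the alternation from an unadapted spanning set.
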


From now on we will represent the meromorphic data $K$, by $L$, when it takes values in $F_0$, or by $E$ if it take values in $F_0^{\perp}$.

\begin{example} For a general $n$, let $F_0$ be a two dimensional subspace, $r=3$ and $j=2$. Let $L_{i,1}\in F_0$, $E_{i,1}\in F_0^\perp$ ($0\leq i\leq 2$) and consider the $F_0$-array\renewcommand{\arraycolsep}{5pt}
\[\left[\begin{array}{ll}
L_{0,1}&E_{0,1}\\
E_{1,1}&L_{1,1}\\
L_{2,1}&E_{2,1}\\
\end{array}\right].\]\renewcommand{\arraycolsep}{1pt}\noindent

Then, using \eqref{Equation:FirstEquationInTheorem:MainTheoremNonBundleVersion}, one gets $H_{0,1}=L_{0,1}$, $H_{0,2}=E_{0,1}$, $H_{1,1}=E_{1,1}$, $H_{1,2}=L_{1,1}$, $H_{2,1}=-E_{1,1}+L_{2,1}$, and $H_{2,2}=-L_{1,1}+E_{2,1}$.

We will assume that $L_{0,1}$, $L_{0,1}^{(1)}$ are linearly independent and that $E_{0,1}$, $E_{0,1}^{(1)}$,
$E_{0,1}^{(2)}$ are also linearly independent.

Therefore, the map $\phi=(\pi_{F_0}-\pi_{F_0}^\perp)...(\pi_3-\pi_3^\perp)$ is harmonic and $G_*(\mathbb C^n)$-valued, where
\[\begin{array}{ll}
\underline{\alpha}_1=\wordspan \{&L_{0,1},E_{0,2}\}\\
\underline{\alpha}_2=\wordspan \{&L_{0,1}+\pi_1^\perp E_{1,1} ,E_{0,1}+\pi_1^\perp L_{1,1},\pi_1^\perp L^{(1)}_{0,1},\pi_1^\perp E^{(1)}_{0,1}\}\\
\underline{\alpha}_3=\wordspan \{&L_{0,1}+\pi_1^\perp E_{1,1}+\pi_2^\perp \pi_1 E_{1,1},E_{0,1}+\pi_1^\perp L_{1,1}+\pi_2^\perp\pi_1L_{1,1}+\pi_2^\perp\pi_1^\perp E_{2,1},\\
~&(\pi_1^\perp+\pi_2^\perp)L^{(1)}_{0,1}+\pi_2^\perp\pi_1^\perp E_{1,1}^{(1)},(\pi_1^\perp+\pi_2^\perp) E^{(1)}_{0,1}+\pi_2^\perp\pi_1^\perp E_{1,1}^{(1)},\pi_2^\perp\pi_1^\perp E_{0,1}^{(2)}\}
\end{array}\]
We remark that $\pi_{2}^{\perp}\pi_{1}^{\perp}L_{2,1}$ and  $\pi_{2}^{\perp}\pi_{1}^{\perp}L_{0,1}^{(2)}$ vanish, since $L_{0,1}$ and $\pi_{1}^\perp H_{0,1}^{(1)}$ span $F_0$. It is easily seen from the decomposition $\underline{F}_1 = \underline{\alpha}_{1}\cap F_0\oplus\underline{\alpha}_{1}^{\perp}\cap F_{0}^\perp$ that the rank of the bundle $F_1$ is $n-2$; in fact $\wordrank(\underline{\alpha}_1\cap F_0)= 1$ and $\wordrank(\underline{\alpha}_{1}^\perp\cap F_{0}^\perp)=n-3$.

In the same way we conclude that $\wordrank(\underline{F}_2)=2$, since $\underline{\alpha}_{2}^{\perp} \cap \underline{F}_{1}^\perp=\{0\}$. Then from the decomposition
$\underline{F}_3 = \underline{\alpha}_{3} \cap \underline{F}_{2} \oplus \underline{\alpha}_3^\perp \cap \underline{F}_2^\perp$ we obtain $\wordrank(\underline{F}_3)=n-3$ and $\phi$ is a harmonic map into $G_{n-3}(\mathbb C^n)$ with uniton number $3$.
\end{example}

From now on, for a harmonic map $\phi : M^2\rightarrow G_*(\mathbb C^n)$, we will represent by $\underline{F}_{\phi}$ the corresponding tautological bundle and for $\phi =(\pi_0-\pi_0^{\perp})(\pi_1-\pi_1^\perp)...(\pi_r-\pi_r^\perp)$ we will write $\underline{F}_{\phi_i}=\underline{F}_i$, where $\phi_i=(\pi_0-\pi_0^\perp)...(\pi_i-\pi_i^\perp)$ ($0\leq i\leq r$) and $F_0$ is a constante subspace of $\mathbb C^n$.

We let $h:G_k(\mathbb C^n) \rightarrow G_{n-k}(\mathbb C^n)$ represent the isometry given by $h(F)=F^{\perp}$. Of course, $\phi=\pi_{F_i}-\pi_{F_i}^\perp$ implies that $h(\phi)=\pi_{F_i}^\perp-\pi_{F_i}$. Hence,
\begin{proposition}\label{Proposition:Perping}
If $\phi_i=(\pi_0-\pi_0^{\perp})(\pi_1-\pi_1^{\perp})...(\pi_i-\pi_i^{\perp})$, then $h\circ\phi_i=(\pi_0^{\perp}-\pi_0)(\pi_1-\pi_1^{\perp})...(\pi_i-\pi_i^{\perp})$.
\end{proposition}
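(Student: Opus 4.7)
The key observation is to identify the effect of the isometry $h$ when we pass through the Cartan embedding $\iota(F)=\pi_F-\pi_F^\perp$. Since $\pi_{F^\perp}=\pi_F^\perp$, we get
\[\iota(h(F))=\pi_{F^\perp}-\pi_{F^\perp}^\perp=\pi_F^\perp-\pi_F=-(\pi_F-\pi_F^\perp)=-\iota(F).\]
In other words, under the Cartan embedding, the orthogonal-complement isometry $h:G_k(\fieldC^n)\to G_{n-k}(\fieldC^n)$ becomes multiplication by $-1$ inside $\U(n)$. Identifying $\phi_i$ with its Cartan image (as the paper does throughout), this reads $h\circ\phi_i=-\phi_i$.

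Given this, the proposition reduces to the formal manipulation
\[-\phi_i=-(\pi_0-\pi_0^\perp)(\pi_1-\pi_1^\perp)\cdots(\pi_i-\pi_i^\perp)=(\pi_0^\perp-\pi_0)(\pi_1-\pi_1^\perp)\cdots(\pi_i-\pi_i^\perp),\]
i.e., absorbing the minus sign into the first factor. The only substantive point is therefore the first one, namely that $h$ acts as negation on the Cartan image; once this is recorded the rest is a one-line algebraic identity. There is no real obstacle here, so I would present the proof in two lines: verify $h\circ\phi_i=-\phi_i$ via the computation $\pi_{F^\perp}-\pi_F=-(\pi_F-\pi_F^\perp)$, and then distribute the sign into the leading factor of the unitonic factorization.
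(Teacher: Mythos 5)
Your proof is correct and is essentially identical to the paper's own argument: the paper likewise observes that $\phi_i=\pi_{F_i}-\pi_{F_i}^\perp$ gives $h(\phi_i)=\pi_{F_i}^\perp-\pi_{F_i}=-\phi_i$ and absorbs the sign into the leading factor $(\pi_0^\perp-\pi_0)$. Your explicit verification that $h$ becomes negation under the Cartan embedding, via $\pi_{F^\perp}=\pi_F^\perp$, just spells out the step the paper dismisses with ``of course.''
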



\subsection{Harmonic maps into $G_p(\mathbb C^n)$}\label{Subsection:HarmonicMapsIntoGpCn}


$\text{ }$

Given a subspace $F_0$ of $\mathbb C^n$ the main ingredient in building harmonic maps of finite uniton number is the selection of meromorphic data with values in $F_0$ and $F_0^\perp$. Let $k$ denote the dimension of the complex subspace $F_0$ of $\mathbb C^{n}$, $r$ the uniton number and fix $0\leq i\leq r-1$.

For each family $\{L_{i,j}\}_{1 \leq j \leq n}$ such that $L_{a,j}=0$ whenever $0\leq a<i$ we use the notation: $l_i^t= \wordrank \,\wordspan \{C^{i+t}_{i+t}L^{(t)}_{i,j}\}_{1 \leq j \leq n}$, where $0\leq t\leq r-i-1$.

Analogously, for each family $\{E_{i,j}\}_{1 \leq j \leq n}$ such that $E_{a,j}=0$ whenever $0\leq a <i$ we use the notation $s_i^t= \wordrank \,\wordspan \{C^{i+t}_{i+t}E^{(t)}_{i,j}\}_{1 \leq j \leq n}$, where $0 \leq t \leq r-i-1$.

In this way we get two triangular $r\times r$ matrices\renewcommand{\arraycolsep}{5pt}
\[L=\left[
\begin{array}{cccc}
l_{0}^{0} & 0 & \cdots  & 0 \\
l_{0}^{1} & l_{1}^{0} & \cdots & 0 \\
\vdots  & \vdots  & \ddots  & \vdots \\
l_{0}^{r-1} & l_{1}^{r-2} & \cdots  & l_{r-1}^{0}
\end{array}
\right] \text{ and }S=\left[
\begin{array}{cccc}
s_{0}^{0} & 0 & \cdots  & 0 \\
s_{0}^{1} & s_{1}^{0} & \cdots & 0 \\
\vdots  & \vdots  & \ddots  & \vdots \\
s_{0}^{r-1} & s_{1}^{r-2} & \cdots  & s_{r-1}^{0}
\end{array}
\right],\]\renewcommand{\arraycolsep}{1pt}\noindent
where, in each column $i+1$, the entries  $(l_{i}^{0},\cdots l_{i}^{r-i-1})$ and $(s_{i}^{0},\cdots s_{i}^{r-i-1})$ are decreasing sequences.

Notice that the sum of all entries of both matrices up to the $i$'th line is exactly the rank of $\alpha_{i+1}$. Of course, the sum of all entries of $L$ has to be less or equal than $k$, the sum of all entries of $S$ has to be less or equal than $n-k$ and the sum of all entries of both matrices has to be less or equal than $n-1$.

Under the above conditions we will say that \emph{the pair $(L,S)$ is adapted to $F_{0}$}. From now on, $F_0$ will denote a subspace of $\mathbb C^n$ with dimension $k$ and $(L,S)$ will represent an adapted pair of matrices of order $r$.

\begin{example}
Let $n=10$, $k=5$ and consider an $F_0$ array of the form\renewcommand{\arraycolsep}{5pt}
\[\left[
\begin{array}{ccccc}
L_{0,1} & E_{0,1} & 0 & 0 & 0 \\
E_{1,1} & L_{1,1} & E_{1,2} & L_{1,2} & 0 \\
L_{2,1} & E_{2,1} & L_{2,2} & E_{2,2} & L_{2,3}
\end{array} \right]\]\renewcommand{\arraycolsep}{1pt}\noindent
to build a uniton number $3$ harmonic map $\varphi: S^2 \rightarrow G_*(\mathbb C^{10})$, according to Theorem \ref{Theorem:MainTheoremNonBundleVersion}. We know that $\underline{\alpha} _1= \wordspan\left\{ L_{0,1},E_{0,1}\right\}$, and

\[\begin{array}{ll}
\underline{\alpha} _{2}^{(0)}=\wordspan\{&L_{0,1}+C_{1}^{1}E_{1,1},E_{0,1}+C_{1}^{1}L_{1,1},C_{1}^{1}L_{1,2},C_{1}^{1}E_{1,2}\},\\
\underline{\alpha} _{2}^{(1)}=\wordspan\{&C_{1}^{1}L_{0,1}^{(1)},C_{1}^{1}E_{0,1}^{(1)}\},\\
~&~\\
\underline{\alpha}_{3}^{(0)}=\wordspan\{&L_{0,1}+C_{1}^{2}E_{1,1}+C_{2}^{2}L_{2,1},E_{0,1}+C_{1}^{2}L_{1,1}+C_{2}^{2}E_{2,1},\\
~&C_{1}^{2}L_{1,2}+C_{2}^{2}E_{2,2},C_{1}^{1}E_{1,2}+C_{2}^{2}L_{2,2},C_{2}^{2}L_{2,3}\},\\
\underline{\alpha}_{3}^{(1)}=\wordspan\{&C_{1}^{2}L_{0,1}^{(1)}+C_{2}^{2}E_{1,1}^{(1)},C_{1}^{2}E_{0,1}^{(1)}+C_{2}^{2}L_{1,1}^{(1)},C_{2}^{2}E_{1,2}^{(1)}\},\\
\underline{\alpha} _{3}^{(2)}=\wordspan\{&C_{2}^{2}L_{0,1}^{(2)}\},
\end{array}\]
where we have assumed that $C_{2}^{2}L_{1,2}^{(1) }=C_{2}^{2}E_{0,1}^{(2) }=0$, $\wordrank(\underline{\alpha}_1)=2$, $\wordrank(\underline{\alpha}_2)=6$ and $\wordrank(\underline{\alpha}_3)=9$.

\bigskip
As we have seen before, underlying the construction of a uniton three harmonic map there is a pair  $(L,S)$ of $3 \times 3$ diagonal matrices adapted to $F_0$, say\renewcommand{\arraycolsep}{5pt}
\[L=\left[
\begin{array}{ccc}
l_{0}^{0} & 0 & 0 \\
l_{0}^{1} & l_{1}^{0} & 0 \\
l_{0}^{2} & l_{1}^{1} & l_{2}^{0}
\end{array}
\right]\text{ and }S=\left[
\begin{array}{ccc}
s_{0}^{0} & 0 & 0 \\
s_{0}^{1} & s_{1}^{0} & 0 \\
s_{0}^{2} & s_{1}^{1} & s_{2}^{0}
\end{array}
\right].\]\renewcommand{\arraycolsep}{1pt}\noindent
We remark that, for each $j \in \left\{0,1,2\right\}$ and $i\leq j$, $\wordrank(\underline{\alpha}_{j+1}^i)=\sum_{k=0}^{j-i}(l_{k}^{i}+s_{k}^{i})$, so that the rank of $\underline{\alpha}_{j+1}$ is $\sum_{i=0}^{j}\sum_{k=0}^{j-i}(l_{k}^{i}+s_{k}^{i})$.

Since $C_2^2 L_{1,2}^{(1)}=C_2^2 E_{0,1}^{(2)}=0$, in the particular case of this example we have\renewcommand{\arraycolsep}{5pt}
\[L=\left[
\begin{array}{ccc}
1 & 0 & 0 \\
1 & 1 & 0 \\
1 & 0 & 1
\end{array}
\right]\text{ and }S=\left[
\begin{array}{ccc}
1 & 0 & 0 \\
1 & 1 & 0 \\
0 & 1 & 0
\end{array}
\right].\]\renewcommand{\arraycolsep}{1pt}\noindent
\end{example}
\bigskip
In the sequel, given $F_0$ and an adapted pair $(L,S)$ of $r \times r$ matrices,  we will use the following notation: $A_0=B_0=0$ and, for each $i\in\left\{1,...,r-1\right\}$, $A_{i}=\sum_{r=0}^{i-1}l_{r}^{0}+s_{r}^{0}$ and $B_i=A_i+l_i^0$.

\begin{definition}\label{Definition:ArrayMatchingPair}
An $r\times n$ $F_0$-array $(K_{i,j})_{0\leq i\leq r-1, 1\leq j\leq n}$  is said to match the the ordered pair $(L,S)$ if, for each $i\in\{0,...,r-1\}$, the following conditions hold:

(i) $\pi_{F_0^{\perp}}(K_{i,j}) = 0,\,\forall\,A_i + 1 \leq j \leq A_i + l_i^0$ and $\pi_{F_0}(K_{i,j}) = 0,\,\forall\, B_i+1\leq j\leq B_i+s_i^0=A_{i+1}$.

(ii) For each $0 \leq j \leq i$, $\wordrank\,\wordspan \left\{C^i_i K^{(i-j)}_{j, A_{j}+1},...,C^i_i K^{(i-j)}_{j, A_{j}+l_j^0}\right\}=l_j^{i-j}$ and \\
$\wordrank\,\wordspan \left\{C^i_i K^{(i-j)}_{j, B_{j}+1},...,C^i_i K^{(i-j)}_{j, B_{j}+s_j^0}\right\}=s_j^{i-j}$.

(iii) $\wordrank\,\wordspan \left\{C^i_i K^{(i-j)}_{j, A_{j}+1},...,C^i_i K^{(i-j)}_{j, A_{j}+l_j^0}\right\}_{0\leq j \leq i}=\displaystyle{\sum _{j=0}^{i}l_j^{i-j}}$ and

$\wordrank\,\wordspan\left\{C^i_i K^{(i-j)}_{j, B_{j}+1},...,C^i_i K^{(i-j)}_{j, B_{j}+s_j^0}\right\}_{0\leq j \leq i}=\displaystyle{\sum _{j=0}^{i}s_j^{i-j}}$.
\end{definition}

\begin{remark}
(i) We easily conclude that the rank of the bundle $\underline{\alpha}_{i+1}$ is $\sum_{t=0}^{i}\sum_{j=0}^{i-t}(l_{t}^{j}+s_{t}^{j})$, the sum of all entries of the first $i$ lines of both triangular matrices.

(ii) The $l_i^t$ and $s_i^t$ are independent of the choice of the complex coordinate $z$; in fact, once $\alpha_{j+1}^{(0)}$ is defined, letting
\[V_j=\wordspan\{C_j^j K_{j,A_j+1}^{(i-j)},...,C_j^jK_{j,A_j+l_j^0}^{(i-j)}\}=\left\{
\begin{array}{l}
\ker \pi_j|_{\alpha_{j+1}^{(0)}}\cap F_{j+1}^\perp,\text{ if $j$ odd}\\
\ker \pi_j|_{\alpha_{j+1}^{(0)}}\cap F_{j+1},\text{ if $j$ even},
\end{array}
\right.\]
we have $l_j^0=\wordrank V_j$ and $l_j^{i-j}=\wordrank A_z^{\phi_{i}}...A_z^{\phi_{j+1}}V_j$ ($i\geq j$). Analogously with respect to the $s_i^t$.
\end{remark}

An induction argument allows the following result:

\begin{theorem}\label{Theorem:MainTheoremForUnitonCounting}
Let $r \in \{1,...,n-1)$, $F_0$ be a $k$-dimensional subspace of $\mathbb C^n$ and consider a pair $(L,S)$ adapted to $F_0$. For any $F_0$-array $(K_{i,j})_{0\leq i\leq r-1, 1\leq j\leq n}$ which matches $(L,S)$ and $i\in\{0,...,r\}$, the rank of the tautological bundle $\underline{F}_i$ corresponding to the harmonic map $\phi_i=(\pi_0 -\pi_{0}^{\perp})...(\pi_i -\pi_{i}^{\perp})$ is given by
\begin{equation}\label{Equation:EquationInTheorem:MainTheoremForUnitonCounting}
\left\{\begin{array}{l}\displaystyle{k+\sum_{j=0}^{\frac{i}{2}-1}\sum_{t =0}^{2j+1}
(s_{t}^{2j+1-t}-l_{t}^{2j+1-t})}\text{ if }i\text{ is even}\\
\displaystyle{n-\Big[k+ \sum_{j=0}^{\frac{i-1}{2}}\sum_{t =0}^{2j}
(s_{t}^{2j-t}-l_{t}^{2j-t})\Big] }\text{ if }i\text{ is odd,}
\end{array}\right.
\end{equation}

\end{theorem}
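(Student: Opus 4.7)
The plan is to argue by induction on $i$, using Proposition \ref{Proposition:DecompositionIntoHoloAndAntiHoloPartsOfHarmonicMapsIntoGStarCn} as the engine and the $F_0$-array/matching conditions of Definition \ref{Definition:ArrayMatchingPair} as bookkeeping. The base case $i=0$ is immediate: $\phi_0=\pi_{F_0}-\pi_{F_0}^\perp$ gives $\underline{F}_0=F_0$ of rank $k$, which matches the even formula with an empty double sum. The case $i=1$ is exactly the computation carried out in Example \ref{Example:MapsIntoGAst3}: since $F_0$ splits $\underline{\alpha}_1$ by the matching condition (i), we get $\wordrank(\underline{\alpha}_1\cap F_0)=l_0^0$, $\wordrank(\underline{\alpha}_1\cap F_0^\perp)=s_0^0$, and hence $\wordrank\underline{F}_1=l_0^0+(n-k)-s_0^0=n-[k+(s_0^0-l_0^0)]$, agreeing with the odd formula for $i=1$.

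For the inductive step, assume the formula for $\phi_i$. The proposition gives the orthogonal decomposition
\[
\underline{F}_{i+1}=(\underline{F}_i\cap\underline{\alpha}_{i+1})\oplus(\underline{F}_i^\perp\cap\underline{\alpha}_{i+1}^\perp),
\]
so that
\[
\wordrank\underline{F}_{i+1}=\wordrank(\underline{F}_i\cap\underline{\alpha}_{i+1})+(n-\wordrank\underline{F}_i)-\wordrank(\underline{F}_i^\perp\cap\underline{\alpha}_{i+1}).
\]
The heart of the proof is therefore to compute the two intersection ranks $a_{i+1}:=\wordrank(\underline{F}_i\cap\underline{\alpha}_{i+1})$ and $b_{i+1}:=\wordrank(\underline{F}_i^\perp\cap\underline{\alpha}_{i+1})$ from the data $(L,S)$ and the parity of $i$, and then to assemble the telescoping sum and recover \eqref{Equation:EquationInTheorem:MainTheoremForUnitonCounting}.

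To identify $a_{i+1}$ and $b_{i+1}$ I would inductively describe $\underline{F}_i$ as a direct sum, indexed by rows $j=0,\dots,i-1$ and derivative orders $t$, of the images under the successive endomorphisms $A_z^{\phi_{i}}\cdots A_z^{\phi_{j+1}}$ of the spaces $V_j$ from Remark (ii) above, split according to whether each generator sits in $F_0$ or in $F_0^\perp$. The $F_0$-array condition forces the alternation: the $L$-columns of $K$ contribute to the $F_0$-part for even residues and to the $F_0^\perp$-part for odd residues (and vice-versa for the $E$-columns). Under the matching conditions (ii)--(iii) of Definition \ref{Definition:ArrayMatchingPair}, together with the recursive description of $\underline{\alpha}_{i+1}$ via the Pascal identity \eqref{Equation:PascalIdentityForCis}, one shows by inspection that the $\alpha_{i+1,\cdot}^{(k)}$ generators whose row-parity agrees with the current parity of $\underline{F}_i$ land in $\underline{F}_i$ and contribute the ranks $l_t^{i-t}$ (respectively $s_t^{i-t}$), while the others land in $\underline{F}_i^\perp$. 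Carrying this out yields
\[
a_{i+1}-b_{i+1}=\sum_{t=0}^{i}(l_t^{i-t}-s_t^{i-t})\quad\text{or}\quad\sum_{t=0}^{i}(s_t^{i-t}-l_t^{i-t}),
\]
according to the parity of $i$; feeding this back into the rank recursion and adding to the inductive hypothesis collapses to \eqref{Equation:EquationInTheorem:MainTheoremForUnitonCounting}.

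The main obstacle is the last step: producing a clean, parity-indexed decomposition of $\underline{F}_i$ compatible with $\underline{\alpha}_{i+1}$, so that the intersections can be read off as partial sums of the $l_t^{i-t}$ and $s_t^{i-t}$. The technical tool is the identity $\pi_{i+1}A_z^{\phi_i}=0$ on $\underline{\alpha}_{i+1}$ together with the explicit derivative formula $A_z^{\phi_i}(\alpha_{i+1,j}^{(k)})=-\alpha_{i+1,j}^{(k+1)}$ recalled in the proof sketch of Theorem \ref{Theorem:Theorem11FromFerSimWood09}; these let one move the parity-shifted generators between the $L$- and $S$-sectors consistently as $i$ grows. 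Once this combinatorial dictionary between the generators $\alpha_{i+1,j}^{(k)}$ and the boxes of $(L,S)$ is in place, the verification of \eqref{Equation:EquationInTheorem:MainTheoremForUnitonCounting} amounts to matching the two double sums and is purely formal.
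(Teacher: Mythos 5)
Your overall architecture coincides with the paper's: induction on $i$, the decomposition $\underline{F}_{i+1}=\underline{\alpha}_{i+1}\cap\underline{F}_i\oplus\underline{\alpha}_{i+1}^\perp\cap\underline{F}_i^\perp$, the rank recursion $\wordrank(\underline{F}_{i+1})=n-\wordrank(\underline{F}_i)+(a_{i+1}-b_{i+1})$, and the matching conditions to evaluate the intersection ranks. However, the key identity you extract is wrong: $a_{i+1}-b_{i+1}$ is \emph{not} equal to $\pm\sum_{t=0}^{i}(l_t^{i-t}-s_t^{i-t})$. That expression is only the contribution of the \emph{new} generators of $\underline{\alpha}_{i+1}$ (those reducing to $C_i^iK_{i-k,j}^{(k)}$ by the matching condition). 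By the covering condition, $\underline{\alpha}_{i+1}$ also contains the deformed images of all generators of $\underline{\alpha}_i$ --- the part called $P$ in Lemma \ref{Lemma:LemmaToProveTheorem:MainTheoremForUnitonCounting} --- and $\wordrank(P\cap\underline{F}_i)=\wordrank(\underline{\alpha}_i\cap\underline{F}_{i-1})=a_i$, which is generally nonzero (already $a_1=l_0^0\geq 1$ whenever $k\geq 1$). The correct statement is therefore the \emph{recursion} $a_{i+1}-b_{i+1}=(a_i-b_i)\pm\sum_{t=0}^{i}(l_t^{i-t}-s_t^{i-t})$, and the accumulated terms are essential. Concretely, at $i=2$ your formula would give
\[
\wordrank(\underline{F}_3)=n-\bigl[k+(s_0^1-l_0^1)+(s_1^0-l_1^0)+(s_0^2-l_0^2)+(s_1^1-l_1^1)+(s_2^0-l_2^0)\bigr],
\]
whereas \eqref{Equation:EquationInTheorem:MainTheoremForUnitonCounting} gives
\[
\wordrank(\underline{F}_3)=n-\bigl[k+(s_0^0-l_0^0)+(s_0^2-l_0^2)+(s_1^1-l_1^1)+(s_2^0-l_2^0)\bigr],
\]
and these disagree in general. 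So the final ``purely formal'' collapse you invoke would not go through as written.

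The repair is exactly what the paper does: prove the recursions of Lemma \ref{Lemma:LemmaToProveTheorem:MainTheoremForUnitonCounting}(ii)--(iii) by splitting $\underline{\alpha}_{i+1}=P\oplus Q$, where the matching condition forces $K_{l,j}=0$ for $l<i$ and $A_i<j\leq A_{i+1}$, so that $Q$ contributes $\sum_t l_t^{i-t}$ to $\underline{F}_i$ and $\sum_t s_t^{i-t}$ to $\underline{F}_i^\perp$ when $i$ is even (roles swapped for $i$ odd), while $P$ carries over the previous intersection ranks; then sum these recursions into the closed double-sum expressions of Corollary \ref{Corollary:CorollaryToProveTheorem:MainTheoremForUnitonCounting} for $a_{i+1}$ and $b_{i+1}$. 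With those closed forms, the odd-indexed double sums in the inductive hypothesis cancel exactly against the corresponding sums inside $a_{i+1}-b_{i+1}$, yielding \eqref{Equation:EquationInTheorem:MainTheoremForUnitonCounting}. Your proposed decomposition of $\underline{F}_i$ into images $A_z^{\phi_i}\cdots A_z^{\phi_{j+1}}V_j$ is in the right spirit --- it is essentially the content of the Remark following Definition \ref{Definition:ArrayMatchingPair} and of these recursions --- but as stated your proof omits the carried-over intersection ranks, and that omission is a genuine gap, not a cosmetic one.
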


Using Theorem \ref{Theorem:MainTheoremForUnitonCounting} we can see that, when we start with a harmonic map $\phi:M^2 \rightarrow G_p(\mathbb C^n)$ and add a uniton $\underline{\alpha}$, the harmonic map $\phi(\pi_{\underline{\alpha}}-\pi_{\underline{\alpha}^{\perp}})$ does not, in general, take values in the same Grassmannian. However, in certain cases, it is possible to add unitons in such a way that the successive harmonic maps stay in the same Grassmannian (see Example \ref{Example:FirstExampleAfter:Theorem:MainTheoremForUnitonCounting}).

\begin{example}\label{Example:FirstExampleAfter:Theorem:MainTheoremForUnitonCounting}
Let us consider $G_{^4}(\mathbb C^8)$ as target manifold and start with a $4$-dimensional complex subspace $F_0$ of $\mathbb C^8$. We select the ordered pair $(L,S)$ adapted to $F_0$ with\renewcommand{\arraycolsep}{5pt}
\[L=S=\left[
\begin{array}{ccc}
1 & 0 & 0 \\
1 & 0 & 0 \\
1 & 0 & 0
\end{array}
\right]\]\renewcommand{\arraycolsep}{1pt}\noindent
and take a $F_0$-array which matches the pair $(L,S)$. Then the harmonic maps $\phi_1$, $\phi_2$ and $\phi_3$ all have values in $G_4(\mathbb C^8)$, as it is easily seen from Theorem \ref{Theorem:MainTheoremForUnitonCounting}, since $l_0^j=s_0^j$ for every
$j\in\left\{0,1,2\right\}$.
\end{example}

\begin{example}\label{Example:SecondExampleAfter:Theorem:MainTheoremForUnitonCounting}
In this example, using Theorem \ref{Theorem:MainTheoremForUnitonCounting}, we will describe all harmonic maps $\phi:S^2 \rightarrow G_2(\mathbb C^5)$ with uniton number $3$. Let $F_0$ be a $k$- dimensional complex subspace of $\mathbb C^5$ ($0\leq k\leq 5$) and $(L,S)$ a pair adapted to $F_0$,\renewcommand{\arraycolsep}{5pt}
\[L=\left[
\begin{array}{ccc}
l_0^0 & 0 & 0 \\
l_0^1 & l_1^0 & 0 \\
l_0^2 & l_1^ 1 & l_2^0
\end{array}
\right],\,S=\left[
\begin{array}{ccc}
s_0^0 & 0 & 0 \\
s_0^1 & s_1^0 & 0 \\
s_0^2 & s_1^ 1 & s_2^0
\end{array}
\right].\]\renewcommand{\arraycolsep}{1pt}\noindent
The sum of all entries of both matrices has to be less or equal than $4$ and the uniton number three condition implies that at least one element of the third lines of the matrices has to be different from zero. From Theorem \ref{Theorem:MainTheoremForUnitonCounting} we know that
\begin{equation}\label{Equation:EquationInExample:SecondExampleAfter:Theorem:MainTheoremForUnitonCounting}
2=5 -[k+(s_0^0-l_0^0)+(s_0^2-l_0^2)+(s_1^1-l_1^1)+(s_2^0-l_2^0)].
\end{equation}
We have to analyze the different cases according to the dimension of $F_0$.
(a) Considering $k=5$, i.e, $S=0$, we have $2=l_0^0+l_0^2+l_1^1+l_2^0$. The only possibility is\renewcommand{\arraycolsep}{5pt}
\[L=\left[
\begin{array}{ccc}
1 & 0 & 0 \\
1 & 0 & 0 \\
1 & 0 & 0
\end{array}
\right],\]\renewcommand{\arraycolsep}{1pt}\noindent
since $\underline{\alpha}_1$ is full. This gives rise to the unitons
\[\begin{array}{ll}\underline{\alpha}_1&=\wordspan \{L_{0,1}\},\\
\underline{\alpha}_2&=\wordspan\{L_{0,1},\pi_1^\perp L_{0,1}^{(1)}\}\text{ and}\\
\underline{\alpha}_3&=\wordspan\{L_{0,1},\pi_1^\perp L_{0,1}^{(1)},\pi_2^\perp\pi_1^\perp L_{0,1}^{(2)}\}.\end{array}\]

(b) Now we analise the case $k=4$. Here we have $1=(l_0^0-s_0^0)+(l_0^2-s_0^2)+(l_1^1-s_1^1)+(l_2^0-s_2^0)$. It is not hard to check that the only possibility is\renewcommand{\arraycolsep}{5pt}
\[L=\left[
\begin{array}{ccc}
1 & 0 & 0 \\
1 & 0 & 0 \\
1 & 0 & 0
\end{array}
\right]\text{ and }S=\left[
\begin{array}{ccc}
1 & 0 & 0 \\
0 & 0 & 0 \\ 0 & 0 & 0
\end{array}
\right].\]\renewcommand{\arraycolsep}{1pt}\noindent
Therefore we choose our meromorphic data $L_{0,1}$, $L_{1,1}$ and $L_{2,1}$ with values in $F_0$ and $E_{0,1}$ with values in $F_{0}^\perp$. This corresponds to
\[\begin{array}{ll}
\underline{\alpha}_1&=\wordspan\{L_{0,1}, E_{0,1}\}, \\
\underline{\alpha}_2&=\wordspan\{L_{0,1}, E_{0,1}+\pi_1^\perp L_{1,1}, \pi_1^\perp L_{0,1}^{(1)}\}\text{ and}\\
\underline{\alpha}_3&=\wordspan\{L_{0,1}+\pi_2^\perp\pi_1^\perp L_{2,1}, E_{0,1}+C_1^2 L_{1,1},\pi_1^\perp L_{0,1}^{(1)},\pi_2^{\perp}\pi_1^\perp L_{0,1}^{(2)}\}.
\end{array}\]

(c) Consider $k=0$, which corresponds to $L=0$ and implies $3=s_0^0+s_0^2+s_1^1+s_2^0$. We remark that cases like\renewcommand{\arraycolsep}{5pt}
\[S=\left[
\begin{array}{ccc}
1 & 0 & 0 \\
1 & 0 & 0 \\
0 & 0 & 2
\end{array}
\right]\text{ or }S=\left[
\begin{array}{ccc}
2 & 0 & 0 \\
0 & 0 & 0 \\
0 & 0 & 1
\end{array}
\right],\]\renewcommand{\arraycolsep}{1pt}\noindent
although satisfy our equation, have to be excluded, since do not fulfil the fullness of $\underline{\alpha}_1$. Hence the only possibility is\renewcommand{\arraycolsep}{5pt}
\[S=\left[
\begin{array}{ccc}
1 & 0 & 0 \\
1 & 0 & 0 \\
1 & 0 & 1
\end{array}
\right]\]\renewcommand{\arraycolsep}{1pt}\noindent
and we choose our meromorphic data $E_{0,1}$, $E_{2,1}$ and $E_{2,2}$ with values in $F_{0}^{\perp}$ to get the harmonic map $\phi=(\pi_0-\pi_0^{\perp})(\pi_1-\pi_1^{\perp})(\pi_2-\pi_2^{\perp})(\pi_3-\pi_3^{\perp})$, where
\[\begin{array}{ll}
\underline{\alpha}_1&=\wordspan\{E_{0,1}\},\\
\underline{\alpha}_2&=\wordspan\{ E_{0,1},\pi_1^\perp E_{0,1}^{(1)}\}\text{ and}\\
\underline{\alpha}_3&=\wordspan\{E_{0,1}+\pi_2^{\perp}\pi_1^{\perp}E_{2,1},\pi_1^{\perp}E_{0,1}^{(1)}, \pi_2^{\perp}\pi_1^{\perp}E_{0,1}^{(2)}, \pi_2^{\perp}\pi_1^{\perp}E_{2,2} \}.\end{array}\]

The cases $k=1,2,3$ must be excluded. Regarding $k=2,3$, the fullness of $\underline{\alpha}_1$ would imply that\renewcommand{\arraycolsep}{5pt}
\[L_1=\left[
\begin{array}{c}
1\\
1\\
0
\end{array}
\right]\text{ and }S_1=\left[
\begin{array}{c}
1\\
1\\
1
\end{array}
\right]\]\renewcommand{\arraycolsep}{1pt}\noindent
in the first case and\renewcommand{\arraycolsep}{5pt}\noindent
\[L_1=\left[
\begin{array}{c}
1\\
1\\
1
\end{array}
\right]\text{ and }S_1=\left[
\begin{array}{c}
1\\
1\\
0
\end{array}
\right]\]\renewcommand{\arraycolsep}{1pt}\noindent
in the second case, which is not adequate for $G_2(\mathbb C^5)$ as the sum of these entries is $5$. As for the case $k=1$, the fullness of $\underline{\alpha}_1$ would require\renewcommand{\arraycolsep}{5pt}
\[L=\left[
\begin{array}{ccc}
1 & 0 & 0 \\
0 & 0 & 0 \\
0 & 0 & 0
\end{array}
\right]\text{ and }S=\left[
\begin{array}{ccc}
1 & 0 & 0 \\
1 & 0 & 0 \\
1 & 0 & 0
\end{array}
\right],\]\renewcommand{\arraycolsep}{1pt}\noindent
which does not satisfy \eqref{Equation:EquationInExample:SecondExampleAfter:Theorem:MainTheoremForUnitonCounting}. Hence, the three cases (a), (b) and (c) are the only ones yielding uniton number three harmonic maps into $G_2(\mathbb C^5)$.

From Proposition \ref{Proposition:Perping} interchanging $F_0^{\perp}$ with $F_0$ and $S$ with $L$, we get the description of all uniton number three harmonic maps into $G_3(\mathbb C^5)$.

\end{example}

It is known that, for a harmonic map $\phi: M^2 \rightarrow G_p(\mathbb C^n)$, the maximal uniton number is less or equal than $2\text{min}\left\{p,n-p\right\}$ \cite{DongShen:96,BurstallGuest:97}. We will see, later on, that this estimate is sharp only when $n\neq 2p$ and $F_0$ is trivial.

In the next theorem, fixing a subspace $F_0$ with dimension $k$, we present an estimate for the uniton number of a harmonic map
$\phi=(\pi_0-\pi_0^{\perp})(\pi_1-\pi_1^{\perp})...(\pi_i-\pi_i^{\perp})$, when $2p\leq n$. This estimate is sharp and covers all situations, for if $n>p$ and $\phi : M^2 \rightarrow G_p(\mathbb C^n)$ is harmonic,
$h\circ \phi: M^2 \rightarrow G_{n-p}(C^n)$ is a harmonic map with the same uniton number and $2(n-p)<n$.

\begin{theorem}\label{Theorem:EstimatesOnTheUnitonNumber}
Let $F_0$ be a $k$-dimensional complex subspace of $\mathbb C^n$ and $\phi=(\pi_0-\pi_0^{\perp})(\pi_1-\pi_1^{\perp})...(\pi_{r_k}-\pi_{r_k}^\perp)$ be a harmonic map into $G_p(\mathbb C^n)$,  where $r_k$ is the uniton number and $2p \leq n$. Then,

(i) $r_k\leq \min \left\{ 2p-k-a_{k},n-1\right\}$if $k<p$;

(ii) $r_k\leq p$ if $k\geq p$ and $k+p\leq n$;

(iii) $r_k\leq 2p-(n-k)-a_{k}$ if $k\geq p$ and $k+p>n$,

where $a_{k}=\left\{\begin{array}{l}
1\text{ if $k$ is even and $k<p$ or $n-k$ is even and $k\geq p$}\\
0\text{ if $k$ is odd and $k<p$ or $n-k$ is odd and $k\geq p$}.
\end{array}
\right.$

Moreover, the above estimates are sharp, except in the case $k=p$, where $r_k \leq p-1$.
\end{theorem}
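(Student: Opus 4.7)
The approach is to translate the uniton-number bound into a combinatorial optimisation on the pair of triangular matrices $(L,S)$ adapted to $F_0$ that records the ranks $l_i^t$ and $s_i^t$. By Theorem \ref{Theorem:MainTheoremForUnitonCounting}, the identity $\wordrank(\underline{F}_{r_k}) = p$ reads, with $\ell_m = \sum_{i+j=m} l_i^j$ and $\sigma_m = \sum_{i+j=m} s_i^j$,
\[
p - k = \sum_{m\text{ odd},\,m\leq r_k-1}(\sigma_m-\ell_m)\quad\text{or}\quad n-k-p = \sum_{m\text{ even},\,m\leq r_k-1}(\sigma_m-\ell_m),
\]
depending on the parity of $r_k$. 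These must be combined with the structural constraints $\sum_m \ell_m \leq k$, $\sum_m \sigma_m \leq n-k$, $\sum_m(\ell_m+\sigma_m) \leq n-1$, the column-monotonicity of $L$ and $S$, and the condition that each row of $L+S$ has positive sum, which is forced by the minimality of $r_k$ together with the fact that the factorisation comes from a type-one extended solution.

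The argument is then carried out case-by-case, with an internal split according to the parity of $r_k$. Take case (i), $k<p$, as a prototype. When $r_k$ is even, the sum on the right involves the $r_k/2$ odd rows, each of which is non-empty; combining this with $F_o = p-k+E_o$ (where $E_o,F_o$ denote odd-row sums of $L,S$) and controlling $E_o$ by $\sum_m \ell_m \leq k$ and the column monotonicity of $L$ (which, together with the fullness of $\underline{\alpha}_1$, concentrates the non-zero entries near the top of each column) yields $r_k \leq 2p - k$. The case $r_k$ odd is symmetric. The correction $a_k$ arises from a parity constraint: the right-hand side of each of the two identities above has fixed parity depending on $k$ and $p$; when the assumed parity of $r_k$ is inconsistent with it, the corresponding sub-case is ruled out and only the other sub-case, weaker by one, survives. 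Cases (ii) and (iii) are handled by the same scheme, reducing when convenient to case (i) via the $h$-duality of Proposition \ref{Proposition:Perping} (which interchanges $F_0$ with $F_0^{\perp}$ and hence $L$ with $S$); in case (iii), $k+p>n$ renders the global cap $\sum_m(\ell_m+\sigma_m) \leq n-1$ binding.

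Sharpness of each estimate is established by exhibiting an explicit admissible pair $(L,S)$ saturating it, together with a sufficiently generic $F_0$-array of meromorphic data, along the lines of the three sub-cases (a), (b), (c) of Example \ref{Example:SecondExampleAfter:Theorem:MainTheoremForUnitonCounting}. The exceptional value $r_k=p$ in the case $k=p$ of (ii) is excluded by a separate argument: saturation would force $(L,S)$ to be a length-$p$ staircase supported in a single column of one matrix and identically zero in the other, so $\underline{\alpha}_1$ would have rank one with its generator and all derivatives lying in the constant subspace $F_0\subsetneq\mathbb{C}^n$, contradicting the fullness of $\underline{\alpha}_1$.

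The main obstacle is to synchronise the combinatorial optimisation with the global fullness condition on $\underline{\alpha}_1$, which is not encoded by the row-sum statistics alone and is precisely what rules out the extra unit needed in the $k=p$ case. The parity correction $a_k$ also requires care, as it ultimately rests on an independent parity constraint traceable to the determinantal identity $\det(\phi) = \det(\pi_{F_0}-\pi_{F_0}^{\perp})\prod_{i=1}^{r_k}\det(\pi_i-\pi_i^{\perp})$ together with $\det(\phi) = (-1)^{n-p}$, which couples $p$, $k$, $n$ and the ranks of the unitons $\underline{\alpha}_i$.
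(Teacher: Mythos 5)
Your proposal reproduces the paper's overall skeleton: the translation via Theorem \ref{Theorem:MainTheoremForUnitonCounting} into a combinatorial optimisation over adapted pairs $(L,S)$, the budget and row-positivity constraints, the parity split on $r_k$, the fullness-forced concentration in the first columns, and sharpness via explicit staircase pairs; your two displayed identities are correct transcriptions of \eqref{Equation:EquationInTheorem:MainTheoremForUnitonCounting}, and your fullness-based exclusion of $r_k=p$ when $k=p$ is in the paper's spirit. However, the mechanism you give for the correction term $a_k$ --- which is the actual content of the theorem beyond the crude bound $r_k\le 2p-k$ --- does not work. You claim the right-hand sides of the two identities ``have fixed parity depending on $k$ and $p$,'' so that one parity sub-case of $r_k$ is inconsistent and only the other, weaker by one, survives. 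There is no such parity rigidity: the quantities $\sigma_m-\ell_m$ are arbitrary integers subject only to the budgets and the positivity of row sums, so neither sub-case is ever ruled out. In the paper \emph{both} parities of $r_k$ survive and each is shown to yield the same bound: fullness of $\underline{\alpha}_1$ forces the first column of $L$ to consist of $k$ ones (exhausting the budget $\sum\ell_m\le k$, so that $L$ vanishes elsewhere and the residual term $\theta$ is nonnegative) and the first column of $S$ to be all ones; the first-column contribution to the odd-row sum is then computed to be $\frac{r_k-k+a_k}{2}$ (and $\frac{r_k-k+1-a_k}{2}$ for the even-row sum in the odd-$r_k$ branch, where the budget additionally bounds the remaining entries by $n-k-r_k-1$). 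Thus $a_k$ is a floor-counting discrepancy --- the number of odd rows among the first $k$ rows versus among the first $r_k$ rows of the forced columns --- and your route never produces it.

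The determinantal identity you invoke as the ``ultimate source'' of the parity correction is, moreover, vacuous here. Since each partial map is Grassmannian-valued, $\underline{F}_{i+1}=\underline{\alpha}_{i+1}\cap\underline{F}_i\oplus\underline{\alpha}_{i+1}^\perp\cap\underline{F}_i^\perp$, whence
\[
\wordrank\,\underline{F}_{i+1}=2\,\wordrank(\underline{\alpha}_{i+1}\cap\underline{F}_i)+n-\wordrank\,\underline{\alpha}_{i+1}-\wordrank\,\underline{F}_i;
\]
reducing mod $2$ and iterating gives exactly $p\equiv k+\sum_i\bigl(n-\wordrank\,\underline{\alpha}_i\bigr)\pmod 2$, which is precisely the content of $\det\phi=\det(\pi_{F_0}-\pi_{F_0}^\perp)\prod_i\det(\pi_i-\pi_i^\perp)$ together with $\det(\pi_F-\pi_F^\perp)=(-1)^{n-\dim F}$. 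So your determinant constraint is the mod-$2$ shadow of the rank bookkeeping already encoded in Theorem \ref{Theorem:MainTheoremForUnitonCounting}; it is automatically satisfied by every configuration the rank identities permit, and one checks directly that the borderline staircase ($k$ even, $r_k=2p-k$, first column of $L$ with $k$ ones, first column of $S$ with $r_k$ ones, all else zero) satisfies it --- yet that is exactly the configuration the term $-a_k$ must eliminate. Two smaller points: the $h$-duality of Proposition \ref{Proposition:Perping} interchanges $p$ with $n-p$ as well as $F_0$ with $F_0^\perp$, so it takes you out of the regime $2p\le n$ and cannot reduce cases (ii)--(iii) to case (i); the paper instead reruns the same count there. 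By contrast, your observation that the global cap $\sum_m(\ell_m+\sigma_m)\le n-1$ becomes binding corresponds to the paper's preliminary step that $r_k\ge k$ and $r_k\ge n-k$ cannot hold simultaneously, and that part of your outline is faithful to the paper.
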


A glance at the list of possibilities given by the previous proposition allows to verify that the maximal uniton number is realized when $k=0$ and $2p\leq n$, or when $k=n$ and $2p\geq n$.

\begin{example}
Assume $\phi=(\pi_0-\pi_0^{\perp})...(\pi_r-\pi_r^{\perp}) : S^2 \rightarrow G_2(\mathbb C^4)$, where $k=2$. From Theorem \ref{Theorem:EstimatesOnTheUnitonNumber} we know that, for $k=2$ fixed, the maximal uniton number is $2$. Let us describe those harmonic maps. Consider an adapted pair $(L,S)$ of ordered $2\times 2$ diagonal matrices adapted to $F_0$. From \eqref{Equation:EquationInTheorem:MainTheoremForUnitonCounting}, we get

$2=2+\sum_{j=0}^{1}(s_j^{1-j}-l_j^{1-j})$ or $s_o^1 + s_1^0 = l_0^1 + l_1^1$.

Clearly the only possibility is\renewcommand{\arraycolsep}{5pt}
\[L=S=\left[
\begin{array}{cc}
1 & 0 \\
1 & 0
\end{array}
\right].\]\renewcommand{\arraycolsep}{1pt}\noindent
Thus we see that we shall start with meromorphic data $L_0$ and $E_0$ with values in $F_0$ and $F_0^\perp$, respectively, to build the unitons
\[\begin{array}{ll}\underline{\alpha}_1&=\wordspan\{L_0, E_0\}\text{ and}\\
\underline{\alpha}_2&= \wordspan\{L_0, E_0, \pi_1^{\perp}L_0^{(1)}, \pi_1^\perp E_0^{(1)}\}.\end{array}\]
\end{example}

\begin{example}
Let us now describe the construction of all harmonic maps $\phi:S^2 \rightarrow G_3(\mathbb C^8)$ (respectively
$\phi:S^2 \rightarrow G_5(\mathbb C^8)$) of the type $\phi=(\pi_0-\pi_0^{\perp})(\pi_1-\pi_1^{\perp})...(\pi_r-\pi_r^{\perp})$, where $k=4$ and $r_k$ is maximal.

We know from Theorem \ref{Theorem:EstimatesOnTheUnitonNumber} that $r_k=3$. Hence, using Theorem \ref{Theorem:MainTheoremForUnitonCounting}, we get $1=(l_0^0 + l_0^2 + l_1^1 + l_2^0)-(s_0^0 + s_0^2 + s_1^1 + s_2^0)$.

Let us try to describe the possible pairs $(L,S)$ of diagonal $3 \times 3$ matrices adapted to $F_0$. As above, since $\alpha_1$ is full, we must have $l_0^i \neq 0$ and $s_0^i \neq 0$ for every $i\in \left\{1,2,3\right\}$. It is easily seen that the only possibility is\renewcommand{\arraycolsep}{5pt}
\[L=\left[
\begin{array}{ccc}
1 & 0 & 0 \\
1 & 0 & 0 \\
1 & 0 & 1
\end{array}
\right]\text{ and }S=\left[
\begin{array}{ccc}
1 & 0 & 0 \\
1 & 0 & 0 \\
1 & 0 & 0
\end{array}
\right].\]\renewcommand{\arraycolsep}{1pt}\noindent
Therefore we must choose our meromorphic data, $\left\{L_{0,1}, L_{1,1}, L_{2,1}, L_{2,2}\right\}$ with values in $F_0$, and
$\left\{E_{0,1}, E_{1,1}, E_{2,1}\right\}$ with values in $F_0^\perp$. This gives rise to the unitons

\[\begin{array}{ll}
\alpha _1&=\wordspan\{L_{0,1}, E_{0,1}\},\\
\alpha _2&=\wordspan\{L_{0,1}+C_1^1 E_1^1, E_{0,1}+C_1^1L_{1,1}, C_1^1L_{0,1}^{(1)}, C_1^1E_{0,1}^{(1)}\},\\
\alpha _3&=\wordspan\{L_{0,1}+C_2^1 E_1^1 +C_2^2L_{2,1}, E_{0,1}+C_1^1L_{1,1}+C_2^2E_{2,1}\},\\
~&\hspace{15mm}C_2^1L_{0,1}^{(1)}+C_2^2E_{1,1}^{(1)},C_1^1E_{0,1}^{(1)}+C_2^2L_{1,1}^{(1)},C_2^2L_{2,2},C_2^2L_{0,1}^{(2)},C_2^2E_{0,1}^{(2)}\}.\end{array}\]

Interchanging $L$ and $S$ and choosing the same holomorphic data we get the description of all harmonic maps $\phi:S^2\rightarrow C_5(\mathbb C^8)$ with uniton number $3$.
\end{example}

We may synthesize the above results in the following statement concerning harmonic maps $\phi:S^2\to G_p(\mathbb C^n)$ into a fixed Grassmannian.

\begin{theorem}\label{Theorem:SynthesizedVersion}
Let $q=\min\{p,n-p\}$, $k\in\{0,...,n\}$ and $r_k$ be under the conditions of Theorem \ref{Theorem:EstimatesOnTheUnitonNumber}. Taking a pair $(L,S)$ of $i\times i$ matrices ($1\leq i\leq r_k$) adapted to $F_0$, whose entries satisfy equations \eqref{Equation:EquationInTheorem:MainTheoremForUnitonCounting}, and an array $(K_{i,j})$ matching $(L,S)$, the map
\[\phi_i=\left\{\begin{array}{l}
(\pi_0-\pi_0^\perp)(\pi_1-\pi_1^\perp)...(\pi_i-\pi_i^\perp)\text{, if $q=p$}\\
(\pi_0^\perp-\pi_0)(\pi_1-\pi_1^\perp)...(\pi_i-\pi_i^\perp)\text{, if $q=n-p$}\\
\end{array}\right.\]
is a harmonic map into $G_p(\mathbb C^n)$. Moreover, all harmonic maps $\phi:S^2\to G_p(\mathbb C^n)$ are obtained this way.
\end{theorem}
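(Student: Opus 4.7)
The plan is to derive this theorem as a direct synthesis of the three main results already proved: Theorem \ref{Theorem:MainTheoremNonBundleVersion} (which characterises harmonic maps into $G_*(\mathbb C^n)$ via $F_0$-arrays), Theorem \ref{Theorem:MainTheoremForUnitonCounting} (which computes $\wordrank\underline F_i$), and Theorem \ref{Theorem:EstimatesOnTheUnitonNumber} (which bounds the uniton number in terms of $k=\dim F_0$). For the forward implication I would first invoke Theorem \ref{Theorem:MainTheoremNonBundleVersion} to conclude that $\phi_i$ is harmonic into $G_*(\mathbb C^n)$; then Theorem \ref{Theorem:MainTheoremForUnitonCounting} computes $\wordrank\underline F_i$ by the right-hand side of \eqref{Equation:EquationInTheorem:MainTheoremForUnitonCounting}. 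The standing hypothesis that the entries of $(L,S)$ satisfy \eqref{Equation:EquationInTheorem:MainTheoremForUnitonCounting} is precisely the statement that this rank equals $p$ in the case $q=p$, so $\phi_i(S^2)\subseteq G_p(\mathbb C^n)$. In the case $q=n-p$, prefixing with $\pi_0^\perp-\pi_0$ instead of $\pi_0-\pi_0^\perp$ amounts, by Proposition \ref{Proposition:Perping}, to composing the resulting map with $h$, converting an image in $G_{n-p}(\mathbb C^n)$ into one in $G_p(\mathbb C^n)$.

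For the converse, any harmonic map $\phi:S^2\to G_p(\mathbb C^n)$ has finite uniton number, so by Theorem \ref{Theorem:MainTheoremNonBundleVersion} it factors as
\[\phi=(\pi_{F_0}-\pi_{F_0}^\perp)(\pi_1-\pi_1^\perp)\cdots(\pi_r-\pi_r^\perp)\]
for some constant subspace $F_0$ and some $F_0$-array $(K_{i,j})$. Setting $k=\dim F_0$ and, if necessary, replacing $\phi$ by $h\circ\phi$ via Proposition \ref{Proposition:Perping} so that the presentation matches whichever of $q=p$ or $q=n-p$ holds, Theorem \ref{Theorem:EstimatesOnTheUnitonNumber} yields $r\le r_k$. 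The integers $l_j^t$ and $s_j^t$ attached to the array (via the $\wordspan$-and-$\wordrank$ formulae of Definition \ref{Definition:ArrayMatchingPair}) produce a pair $(L,S)$ adapted to $F_0$ which the array matches by construction, once the columns within each row are reindexed so that $F_0$-valued data appears first, then $F_0^\perp$-valued data, then zero columns. Finally, since $\wordrank\underline F_r=p$, Theorem \ref{Theorem:MainTheoremForUnitonCounting} forces the entries of $(L,S)$ to satisfy \eqref{Equation:EquationInTheorem:MainTheoremForUnitonCounting}.

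The main delicate point will be the clean case-splitting around $q=p$ versus $q=n-p$: verifying that Proposition \ref{Proposition:Perping} exchanges the two presentations while keeping the matching between data and matrices intact (essentially swapping the roles of $L$ and $S$, and replacing $F_0$ by $F_0^\perp$), and handling the parity-dependent form of the rank expression \eqref{Equation:EquationInTheorem:MainTheoremForUnitonCounting} uniformly with the uniton number bound from Theorem \ref{Theorem:EstimatesOnTheUnitonNumber}. Once these bookkeeping issues are settled, the substantive content is carried entirely by the three antecedent theorems, so no genuinely new computation is needed; the theorem is essentially a reorganisation of the information furnished by the previous subsections into a single clean statement tied to a prescribed target $G_p(\mathbb C^n)$.
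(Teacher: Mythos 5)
Your proposal is correct and follows exactly the intended route: the paper gives no separate proof of Theorem \ref{Theorem:SynthesizedVersion}, introducing it with ``we may synthesize the above results,'' i.e.\ it is meant as the direct combination of Theorem \ref{Theorem:MainTheoremNonBundleVersion} (harmonicity and Grassmannian values), Theorem \ref{Theorem:MainTheoremForUnitonCounting} (rank of $\underline{F}_i$ equal to $p$), Theorem \ref{Theorem:EstimatesOnTheUnitonNumber} (the bound $r\le r_k$), and Proposition \ref{Proposition:Perping} (the $q=n-p$ case via $h$), which is precisely your argument, including the bookkeeping of extracting an adapted pair $(L,S)$ from the array.
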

%


\subsection{A note on the Grassmannian model}\label{Subsection:GrassmannianModel}


$\text{ }$

Let $\H$ denote the Hilbert space $L^2(S^1,\fieldC^n)$ and let $\H_+$ denote the linear closure of elements of the form $\sum_{k \geq 0}\lambda^k e_j$ where $\{e_j\}_{1\leq j\leq n}$ form the standard basis of $\fieldC^n$. The algebraic loop group $\Omega^{\wordalg}\U(n)$ consists of maps $\gamma:S^1\to \U(n)$ with $\gamma(1)=I$ and such that $\gamma(\lambda)=\sum_{k=0}^r\lambda^k A_j$, for some integer $r$ and $A_j\in\gl(n,\fieldC)$. It acts naturally on $\H$ and the correspondence $\gamma\to\gamma(H_+)$ identifies $\Omega^{\wordalg}\U(n)$ with the \emph{algebraic Grassmannian} consisting of all subspaces $W$ of $\H$ such that $\lambda W\subseteq W$ and $\lambda^r\H_+\subseteq W\subseteq\H_+$ for some $r$ \cite{PressleySegal:86,Segal:89}. In particular, we may identify $W$ with the coset $W+\lambda^r\H_+$ in the finite-dimensional vector space $\H_+/\lambda^r\H_+$; this vector space is canonically identified with $\fieldC^{rn}$ via the isomorphism
\begin{equation}\label{Equation:IsomorphismWithTheLs}
(R_0,R_1,...,R_{r-1})\to R_0+\lambda R_1+...+\lambda^{r-1}R_{r-1}+\lambda^r\H_+.
\end{equation}

Now, let $\phi:M^2\to\U(n)$ be a harmonic map of uniton number at most $r$ and $\Phi$ be its unique type one (polynomial) extended solution. We may naturally interpret $\Phi$ as a smooth map $\Phi:M^2\to\Omega^{\wordalg}\U(n)$. With the above identifications, we then have a holomorphic map $W=\Phi(\H_+)$ from $M^2$ into the into $G_*(\fieldC^{rn})$. Equivalently \cite{Guest:97}, a holomorphic subbundle $\underline{W}$ of the trivial bundle $M^2\times\fieldC^{rn}$ satisfying
\begin{equation}\label{Equation:WHarmonicityCondition}
\lambda \underline{W}_{(1)} \subseteq \underline{W},
\end{equation}
where $\underline{W}_{(i)}$ ($i\geq 0$) denotes the subbundle spanned by (local) sections of $\underline{W}$ and their first $i$ derivatives with respect to any complex coordinate $z$ on $M^2$. We call $\underline{W}$ the \emph{Grassmannian model} of $\phi$ (or $\Phi$).

All such subbundles $\underline{W}$ are given by taking an arbitrary holomorphic subbundle $\underline{X}$ of $\underline{\fieldC}^{rn}$ and setting $\underline{W}$ equal to the coset \cite{Guest:02}
\begin{equation}\label{Equation:DefinitionOfWFromX}
\underline{W}=\underline{X}+\lambda\underline{X}_{(1)}+\lambda^2\underline{X}_{(2)}+\cdots+\lambda^{r-1}\underline{X}_{(r-1)}.
\end{equation}

For any $i\geq 0$ and meromorphic vectors $(H_0,H_1,\ldots, H_i)$, set
\begin{equation} \label{Equation:HToL}
R_i=\sum_{l=0}^i\binom{i}{l}H_{l}\,.
\end{equation}

The isomorphism \eqref{Equation:IsomorphismWithTheLs} allows us to describe the Grassmannian model of a finite uniton number harmonic map $\phi:M^2\to\U(n)$ in the following way \cite{FerreiraSimoesWood:09}:

\begin{theorem}
Let $r\geq 1$, and let $\underline{B}$ and $\underline{X}$ be holomorphic subbundles of $\fieldC^{rn}$ related by the linear isomorphism
\[\underline{B}\ni H = (H_0,H_1\ldots, H_{r-1}) \to R = (R_0,R_1,\ldots,R_{r-1}) \in \underline{X}\]
given by \eqref{Equation:HToL}. Write
\[\underline{\alpha}_{i+1}^{(k)}=\{\sum_{s=k}^iC^i_s H_{s-k}^{(k)},\,H\in\Gamma_{\rm{hol}}(\underline{B})\}\text{ and }\underline{\alpha}_{i+1}=\sum_{k=0}^i\underline{\alpha}_{i+1}^{(k)}.\]
Let $\phi:M^2 \to \U(n)$ be the harmonic map given by \eqref{Equation:SecondEquationInTheorem:Theorem11FromFerSimWood09} and $W:M^2 \to G_*(\fieldC^{rn})$ be the holomorphic map given by \eqref{Equation:DefinitionOfWFromX}. Then $W$ is the Grassmannian model of $\phi$.
\end{theorem}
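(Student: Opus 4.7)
The plan is to match $\underline W$ and the Grassmannian model $W=\Phi(\H_+)$ of $\phi$ (reduced mod $\lambda^r\H_+$) by first converting the linear isomorphism \eqref{Equation:HToL} into a generating-function identity and then unpacking Uhlenbeck's factorisation of the type-one extended solution.

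I first identify an element $R=(R_0,\ldots,R_{r-1})$ of $\underline X$ with the class of $\sum_{i=0}^{r-1}\lambda^i R_i$ in $\H_+/\lambda^r\H_+$ via \eqref{Equation:IsomorphismWithTheLs}. The negative binomial identity $\sum_{i\ge l}\binom{i}{l}\lambda^i=\lambda^l(1-\lambda)^{-l-1}$ turns \eqref{Equation:HToL} into
\[\sum_{i=0}^{r-1}\lambda^i R_i \;\equiv\; \sum_{l=0}^{r-1} \frac{\lambda^l}{(1-\lambda)^{l+1}}\,H_l \pmod{\lambda^r\H_+}.\]
Hence $\underline W = \underline X+\lambda \underline X_{(1)}+\cdots+\lambda^{r-1}\underline X_{(r-1)}$ is generated modulo $\lambda^r\H_+$ by the classes $\lambda^k\partial_z^j\bigl(\sum_l \lambda^l(1-\lambda)^{-l-1}H_l\bigr)$ for $0\le j\le k\le r-1$ and $H\in\Gamma_{\rm{hol}}(\underline B)$.

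Next, by \eqref{Equation:FactorizationOfTheHarmonicMapIntoUnitons} the type-one extended solution factorises as $\Phi=(\pi_1+\lambda\pi_1^\perp)\cdots(\pi_r+\lambda\pi_r^\perp)$. I would then show $\underline W\subseteq W\bmod\lambda^r\H_+$ by producing, for each generator of $\underline W$ displayed above, an explicit $\sigma\in\Gamma(\H_+)$ with $\Phi(\sigma)$ equal to that generator modulo $\lambda^r\H_+$. The natural candidate is the polynomial-in-$\lambda$ partial sum whose coefficients involve only the $H_l$'s themselves; after applying the successive factors $(\pi_i+\lambda\pi_i^\perp)$ and collecting powers of $\lambda$, the Pascal-type identity \eqref{Equation:PascalIdentityForCis} for $C^i_s$ together with the relation $A^{\phi_i}_z(\alpha^{(k)}_{i+1,j})=-\alpha^{(k+1)}_{i+1,j}$ recalled after Theorem \ref{Theorem:Theorem11FromFerSimWood09} should transform those coefficients into exactly the generators $\alpha^{(k)}_{i+1,j}$ of the unitons in \eqref{Equation:FirstEquationInTheorem:Theorem11FromFerSimWood09}. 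An inductive statement on $r$ (writing $\Phi=\Phi'\cdot(\pi_r+\lambda\pi_r^\perp)$) makes this rigorous.

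The main obstacle is the combinatorial bookkeeping required to match the binomial coefficients defining $R_i$ from the $H_l$ with the elementary symmetric polynomials $C^i_s$ in $\pi_j^\perp$ arising from the expansion of $\prod_i(\pi_i+\lambda\pi_i^\perp)$. Once that identity is verified, equality $\underline W=W\bmod\lambda^r\H_+$ follows by a rank count: both subbundles have rank $\sum_{i=1}^r\wordrank\,\underline\alpha_i$, the right-hand side by the standard fact that $\dim\H_+/W=\sum_i\wordrank\,\underline\alpha_i^\perp$ for a type-one extended solution of degree $r$, and the left-hand side by reading off the generators of each $\lambda^k\underline X_{(k)}$ modulo $\lambda^r\H_+$ from the explicit description of the $\underline\alpha_{i+1}^{(k)}$.
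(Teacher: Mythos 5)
First, a point of comparison: the paper itself does not prove this theorem --- it is quoted from \cite{FerreiraSimoesWood:09}, where the argument runs in the opposite direction to yours: starting from a subbundle $\underline{W}$ of the form \eqref{Equation:DefinitionOfWFromX}, one recovers the unitons of the type-one factorization \eqref{Equation:FactorizationOfTheHarmonicMapIntoUnitons} through the filtration $W+\lambda^i\H_+$ and derives the formulas \eqref{Equation:FirstEquationInTheorem:Theorem11FromFerSimWood09}, the binomial coefficients in \eqref{Equation:HToL} emerging from the identity $C^i_k=\sum_{s\geq k}\binom{s}{k}S^i_s$ recorded in Subsection \ref{Subsection:Proofs}. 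Your overall architecture (generating-function form of \eqref{Equation:HToL}, the factorization of $\Phi$, induction on $r$, and the virtual-rank count $\dim\H_+/W=\sum_i\wordrank\,\underline{\alpha}_i^{\perp}$ for a type-one solution) is sound, and the congruence $\sum_i\lambda^iR_i\equiv\sum_l\lambda^l(1-\lambda)^{-l-1}H_l \pmod{\lambda^r\H_+}$ is a correct and useful reformulation.

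However, there are two genuine gaps. (a) The central verification is deferred, not done: you name the ``combinatorial bookkeeping'' as the main obstacle, but that identity \emph{is} the content of the theorem rather than a routine check, and your proposed preimage $\sigma$ --- a partial sum ``whose coefficients involve only the $H_l$'s themselves'' --- fails already for $r=2$. Writing $\Phi=(\pi_1+\lambda\pi_1^{\perp})(\pi_2+\lambda\pi_2^{\perp})$ and $\sigma=a+\lambda b$, realizing the generator $H_0+\lambda(H_0+H_1)$ of $\underline{W}$ modulo $\lambda^2\H_+$ requires $\pi_1\pi_2\,a=H_0$; with $a$ a scalar combination of $H_0,H_1$ this generally fails (e.g.\ $\pi_1\pi_2(H_0+H_1)=H_0+\pi_1\pi_2\pi_1H_1\neq H_0$), whereas it works precisely for the projection-dressed section $a=\alpha^{(0)}_{2}=H_0+\pi_1^{\perp}H_1$, with a $\lambda$-correction supplied by the covering condition \eqref{Equation:CoverCondition}. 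So the correct candidates for $\sigma$ are built from the uniton sections $\alpha^{(k)}_{i+1,j}$ themselves --- equivalently, one shows each factor $(\pi_i+\lambda^{-1}\pi_i^{\perp})$ of $\Phi^{-1}$ produces no negative power because the relevant coefficient already lies in $\underline{\alpha}_i$ --- and establishing this is exactly the inductive computation you have skipped. (b) Your rank count for $\underline{W}$ is circular: you justify $\wordrank\,\underline{W}=\sum_i\wordrank\,\underline{\alpha}_i$ ``by reading off the generators of each $\lambda^k\underline{X}_{(k)}$ from the explicit description of the $\underline{\alpha}^{(k)}_{i+1}$'', but identifying the graded pieces of $\underline{W}$ with the $\underline{\alpha}^{(k)}_{i+1}$ is equivalent to the equality $\underline{W}=\Phi\H_+$ being proved; you would need an independent computation of $\wordrank\,\underline{W}$ from $\underline{X}$ and $\underline{B}$ alone (using fullness and the covering condition, which also guarantee $\Phi$ is type-one so that your codimension formula applies). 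Until (a) is carried out with the corrected preimages and (b) is decoupled from the conclusion, the argument does not close.
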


Let $F_0$ denote a constant subspace in $\fieldC^n$. We say that a polynomial $R\in \H_+/\lambda^r \H_+$ is \emph{$F_0$-adapted} if its coefficients have image alternately in $F_0$ and $F_0^\perp$, i.e., $L(\lambda)=\sum_{i=0}^{r-1}L_i\lambda^i$ and \emph{either}

{\rm (i)} $R_i$ has image in $F_0$ for $i$ even, and in $F_0^\perp$ for $i$ odd, \emph{or}

{\rm (ii)} $R_i$ has image in $F_0^\perp$ for $i$ even, and in $F_0$ for $i$ odd.

Note that when $F_0$ is trivial, a polynomial is $F_0$-adapted if and only if it is even or odd, i.e, has coefficients of all odd or all even powers of $\lambda$ equal to zero. Using the Grassmannian model, we have the following characterization of maps into $G_*(\mathbb C^n)$ \cite{FerreiraSimoesWood:09}:

\begin{proposition}
$\Phi$ is the extended solution of a harmonic map into a Grassmannian if and only if $W$ has a spanning set consisting of $F_0$-adapted polynomials, or, equivalently, $W$ is given by \eqref{Equation:DefinitionOfWFromX} for some $X$ which has a spanning set consisting of $F_0$-adapted polynomials.
\end{proposition}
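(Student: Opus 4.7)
The plan is to translate Theorem~\ref{Theorem:MainTheoremNonBundleVersion} into the Grassmannian-model language via the explicit linear changes of basis connecting the array data $(K_{i,j})$, the vector data $(H_{i,j})$ of~\eqref{Equation:FirstEquationInTheorem:MainTheoremNonBundleVersion}, and the $R$-data of~\eqref{Equation:HToL}.

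First I would establish a short stability lemma: since $F_0$ is a constant subspace, both $z$-differentiation (term by term) and multiplication by $\lambda$ preserve $F_0$-adaptedness, the former within a fixed case (i) or (ii) and the latter by interchanging them. In view of the formula $\underline W=\underline X+\lambda\underline X_{(1)}+\cdots+\lambda^{r-1}\underline X_{(r-1)}$ this immediately yields one half of the ``equivalently'' in the statement: an $F_0$-adapted spanning set of $\underline X$ produces one of $\underline W$ by applying derivatives and $\lambda$-powers.

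Next I would carry out the basic algebraic computation underlying everything: composing~\eqref{Equation:FirstEquationInTheorem:MainTheoremNonBundleVersion} with~\eqref{Equation:HToL} and using a hockey-stick identity, one obtains $R_i=\sum_{l=0}^{i}K_l$, equivalently
\[
\widetilde K(\lambda):=\sum_i K_i\lambda^i=(1-\lambda)\,R(\lambda)\quad\text{in } \H_+/\lambda^r\H_+.
\]
Since $(1-\lambda)$ is invertible modulo $\lambda^r$ (with inverse $1+\lambda+\cdots+\lambda^{r-1}$) and $\lambda\underline W\subseteq\underline W$, both $\widetilde K^{(j)}=(1-\lambda)R^{(j)}$ and $R^{(j)}=\widetilde K^{(j)}(1+\lambda+\cdots+\lambda^{r-1})$ lie in $\underline W$, so the change of basis $R\leftrightarrow\widetilde K$ is internal to $\underline W$ without any loss of information.

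The main equivalence with $\phi$ being Grassmannian-valued then proceeds through Theorem~\ref{Theorem:MainTheoremNonBundleVersion}. Starting from an $F_0$-array $(K_{i,j})$ producing $\phi$, each column polynomial $\widetilde K^{(j)}(\lambda)$ is $F_0$-adapted by the very definition of an $F_0$-array. Taking $\underline X$ to be the holomorphic span of $\{\widetilde K^{(j)}\}_j$, I would verify $\underline W=\underline X+\lambda\underline X_{(1)}+\cdots+\lambda^{r-1}\underline X_{(r-1)}$: one inclusion comes from $\lambda^k(\widetilde K^{(j)})^{(l)}=\lambda^k(R^{(j)})^{(l)}-\lambda^{k+1}(R^{(j)})^{(l)}$, the other from $\lambda^k(R^{(j)})^{(l)}=\sum_{a=0}^{r-1-k}\lambda^{k+a}(\widetilde K^{(j)})^{(l)}$ modulo $\lambda^r$. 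This produces an $\underline X$ with $F_0$-adapted spanning set and, by the stability lemma, also an $F_0$-adapted spanning set of $\underline W$. Conversely, given an $F_0$-adapted spanning set of $\underline W$ (or of $\underline X$), reading off the columns yields polynomials whose coefficients alternate between $F_0$ and $F_0^\perp$; inverting $(1-\lambda)$ and then~\eqref{Equation:FirstEquationInTheorem:MainTheoremNonBundleVersion} recovers an $F_0$-array generating $\phi$, so Theorem~\ref{Theorem:MainTheoremNonBundleVersion} forces $\phi$ to be Grassmannian-valued.

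The main technical obstacle I anticipate is the index bookkeeping in the step $\underline W=\underline X+\lambda\underline X_{(1)}+\cdots$ when $\underline X$ is taken as the span of the $\widetilde K^{(j)}$'s rather than of the $R^{(j)}$'s: one must track how the $z$-derivatives and $\lambda$-shifts interlock to recover all of $\underline W$. Nothing deeper than the invertibility of $(1-\lambda)$ modulo $\lambda^r$ and the $\lambda$-invariance of $\underline W$ is required, but the combinatorics needs some care.
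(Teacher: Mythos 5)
Your proposal is correct and takes essentially the same approach as the paper: the paper's own treatment (the discussion following the proposition, attributed to \cite{FerreiraSimoesWood:09}) rests precisely on your identity $R_{i,j}=\sum_{s=0}^{i}K_{s,j}$, i.e.\ $\widetilde{K}(\lambda)=(1-\lambda)R(\lambda)$ modulo $\lambda^{r}\H_+$, together with the same exchange of coset generators between the $R$-data and the $F_0$-adapted $K$-columns feeding into Theorem \ref{Theorem:MainTheoremNonBundleVersion}. Your stability lemma and the explicit inverse $1+\lambda+\cdots+\lambda^{r-1}$ of $1-\lambda$ modulo $\lambda^{r}$ simply make rigorous what the paper leaves implicit in its displayed identity, so no gap remains.
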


Now, let $\phi$ be a Grassmannian-valued harmonic map given as in Theorem \ref{Theorem:MainTheoremNonBundleVersion} for some $F_0$-array $(K_{i,j})_{0\leq i\leq r-1, 1\leq j\leq n}$. Using \eqref{Equation:FirstEquationInTheorem:MainTheoremNonBundleVersion} and \eqref{Equation:HToL} one can easily check that
\[R_{i,j}=\sum_{s=0}^i K_{s,j}.\]
Hence, the Grassmannian model for $\phi$ is given by
\[\underline{W}=\underline{X}+\lambda\underline{X}_{(1)}+\lambda^2\underline{X}_{(2)}+\cdots+\lambda^{r-1}\underline{X}_{(r-1)},\]
where $\underline{X}=(K_{0,j},K_{0,j}+K_{1,j},...,K_{0,j}+...+K_{r-1,j})$. Since
\[\begin{array}{ll}K_{0,j}+\lambda K_{1,j}+...+\lambda^{r-1}K_{r-1,j}&=K_{0,j}+\lambda(K_{0,j}+K_{1,j})+...+\lambda^{r-1}(K_{0,j}+...+K_{r-1,j})\\
~&-\lambda(K_{0,j}+\lambda(K_{0,j}+K_{1,j})+...+\lambda^{r-1}(K_{0,j}+...+K_{r-1,j}),
\end{array}\]
we can also write $\underline{W}=\underline{\tilde{X}}+\lambda\underline{\tilde{X}}_{(1)}+\lambda^2\underline{\tilde{X}}_{(2)}+\cdots+\lambda^{r-1}\underline{\tilde{X}}_{(r-1)}$, where $\underline{\tilde{X}}=(K_{0,j},...,K_{r-1,j})$: it is clear that $\underline{\tilde{X}}$ is spanned by $F_0$-adapted polynomials.

\medskip

Hence, we now can easily construct explicitly our harmonic map $\phi$ from its Grassmannian model $W$: given a set $R_{i,j}$ of $F_0$-adapted polynomials that generate $\underline{W}$, we set $K_{i,j}=R_{i,j}$ and construct the map $\phi$ as in Theorem \ref{Theorem:MainTheoremNonBundleVersion}.


\subsection{Proof of the main results}\label{Subsection:Proofs}


$\text{ }$

Let $\phi:M^2\to G_*(\fieldC^n)$ be a harmonic map and $\underline{\alpha}$ a uniton for $\phi$. From \cite{Uhlenbeck:89}, we know that $\tilde{\phi}=\phi(\pi_\alpha-\pi_\alpha^\perp)$ lies in $G_*(\fieldC^n)$ if and only if $\pi_\alpha$ and $\phi$ commute. This means that $\underline{F}_\phi$ splits the eigenspaces of $\pi_\alpha$ so that $\underline{\alpha}=\underline{\alpha}\cap\underline{F}_\phi\oplus\underline{\alpha}\cap\underline{F}_\phi^\perp$. As a consequence,
\[\tilde{\phi}=\pi_{\alpha\cap F_\phi\oplus\alpha^\perp\cap F_\phi^\perp}-\pi_{\alpha^\perp\cap F_\phi\oplus\alpha\cap F_\phi^\perp}.\]

Recall from \cite{Uhlenbeck:89} the following facts.

\begin{proposition}\label{Proposition:ComingBackStillInGStar}
Let $\phi:M^2\to G_*(\fieldC^n)$ be a harmonic map of finite uniton number $r$. Then, there are unique proper unitons $\underline{\alpha}_1,...,\underline{\alpha}_r$ satisfying the covering condition \eqref{Equation:CoverCondition} and with $\underline{\alpha}_1$ full and a constant map $Q=(\pi_{F_0}-\pi_{F_0}^\perp)$ such that
\begin{equation}\label{Equation:EquationInProposition:ComingBackStillInGStar}
\phi=(\pi_{F_0}-\pi_{F_0}^\perp)(\pi_1-\pi_1^\perp)...(\pi_r-\pi_r^\perp).
\end{equation}
Moreover, each partial map $\phi_{r'}=(\pi_{F_0}-\pi_{F_0}^\perp)(\pi_1-\pi_1^\perp)...(\pi_{r'}-\pi_{r'}^\perp)$ maps into $G_*(\fieldC^n)$
and commutes with $\pi_{r'+1}$.
\end{proposition}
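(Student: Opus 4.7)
The plan is to reduce the proposition to Uhlenbeck's type-one factorization theorem together with her Theorem~15.3 on Grassmannian-valued harmonic maps. First, since $\phi$ has finite uniton number $r$, the existence and uniqueness of the type-one polynomial extended solution yields
\[ \Phi_\lambda=(\pi_1+\lambda\pi_1^\perp)\cdots(\pi_r+\lambda\pi_r^\perp), \]
with the $\underline{\alpha}_i$ proper, $\underline{\alpha}_1$ full, and the covering condition \eqref{Equation:CoverCondition} satisfied; there is then a constant $Q\in\U(n)$ with $\phi=Q\Phi_{-1}$. Evaluating at $\lambda=-1$ gives
\[ \phi=Q(\pi_1-\pi_1^\perp)\cdots(\pi_r-\pi_r^\perp). \]

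Next, I would extract the form of $Q$ from the Grassmannian condition. Under the Cartan embedding, $\phi$ takes values in $G_*(\fieldC^n)$ iff $\phi$ is a pointwise Hermitian involution. Uhlenbeck's Theorem~15.3 encodes this condition as a symmetry of the extended solution $\Phi_\lambda$ under $\lambda\mapsto-\lambda$; matching the two sides of this symmetry against the \emph{uniqueness} of the type-one factorization forces the constant loop $Q$ itself to be a Hermitian involution, whence $Q=\pi_{F_0}-\pi_{F_0}^\perp$ for some constant subspace $F_0\subset\fieldC^n$.

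For the final assertion, the natural approach is downward induction on $r'$, the case $r'=r$ being the hypothesis. Suppose $\phi_{r'+1}$ is Grassmannian-valued. Since $\underline{\alpha}_{r'+1}$ is a uniton for $\phi_{r'}$, its orthogonal complement $\underline{\alpha}_{r'+1}^\perp$ is a uniton for $\phi_{r'+1}$, and the identity $(\pi_{r'+1}-\pi_{r'+1}^\perp)(\pi_{r'+1}^\perp-\pi_{r'+1})=-I$ realizes $-\phi_{r'}$ as the flag transform $\phi_{r'+1}(\pi_{r'+1}^\perp-\pi_{r'+1})$. By Proposition~\ref{Proposition:DecompositionIntoHoloAndAntiHoloPartsOfHarmonicMapsIntoGStarCn}, this transform is Grassmannian-valued iff $\phi_{r'+1}$ splits $\underline{\alpha}_{r'+1}^\perp$, iff $\pi_{r'+1}$ commutes with $\phi_{r'+1}$. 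The required splitting follows from Uhlenbeck's symmetry argument applied now to the truncated polynomial $(\pi_1+\lambda\pi_1^\perp)\cdots(\pi_{r'+1}+\lambda\pi_{r'+1}^\perp)$, which is itself a type-one extended solution for $\phi_{r'+1}$ with the same $Q$, so the induction propagates; the commutation $[\phi_{r'+1},\pi_{r'+1}]=0$ then transfers through $\phi_{r'+1}=\phi_{r'}(\pi_{r'+1}-\pi_{r'+1}^\perp)$ to the desired $[\phi_{r'},\pi_{r'+1}]=0$.

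The principal obstacle will be step two: extracting the involutive form of $Q$ demands Uhlenbeck's symmetry analysis of Grassmannian-valued extended solutions in full force, intertwined with the uniqueness of type-one factorization. Once $Q$ is settled, the third step is a routine iteration of Proposition~\ref{Proposition:DecompositionIntoHoloAndAntiHoloPartsOfHarmonicMapsIntoGStarCn}.
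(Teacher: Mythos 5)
Your proposal is correct and coincides with the paper's treatment: the paper gives no independent argument, explicitly recalling this proposition as facts from Uhlenbeck's work, namely the existence and uniqueness of the type-one factorization $\Phi_\lambda=(\pi_1+\lambda\pi_1^\perp)\cdots(\pi_r+\lambda\pi_r^\perp)$ with $\phi=Q\Phi_{-1}$, and Theorem~15.3 of \cite{Uhlenbeck:89} for the involutive form of $Q$ and the Grassmannian-valuedness of the partial maps. Your sketch assembles exactly these ingredients (with the commutation $[\phi_{r'},\pi_{r'+1}]=0$ correctly transferred through the flag transform), so it is essentially the same approach as the paper's.
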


In the sequel, we shall always consider a harmonic map $\phi:M^2\to G_*(\fieldC^n)$ factorized as in Proposition \ref{Proposition:ComingBackStillInGStar}. Let $\phi$ be as in \eqref{Equation:EquationInProposition:ComingBackStillInGStar}. We define recursively
\begin{equation}\label{Equation:SecondEquationInProposition:SequenceOfMapsAndSubspacesForTheGeneralCase}
\begin{array}{rll}
\underline{F}_1&=&F_0\cap\underline{\alpha}_1\oplus F_0^\perp\cap\underline{\alpha}_1^\perp;\\
\underline{F}_2&=&\underline{F}_1\cap\underline{\alpha}_2\oplus \underline{F}_1^\perp\cap\underline{\alpha}_2^\perp;\\
...\\
\underline{F}_{r}&=&\underline{F}_{r-1}\cap\underline{\alpha}_{r}\oplus\underline{F}_{r-1}^\perp\cap\underline{\alpha}_{r}^\perp.
\end{array}
\end{equation}
It is easy to check that $\phi$ lies in $G_*(\fieldC^n)$ if and only if $\underline{F}_i$ decomposes $\underline{\alpha}_{i+1}$ for all $1\leq i\leq r-1$. Moreover, in that case, each of the partial maps
\[\phi_{r'}=(\pi_{F_0}-\pi_{F_0}^\perp)(\pi_1-\pi_1^\perp)...(\pi_{r'}-\pi_{r'}^\perp)\]
also lies in $G_*(\fieldC^n)$ and $\phi_{r'}=\pi_{\underline{F}_{r'}}-\pi_{\underline{F}_{r'}}^\perp$.

\begin{corollary}\label{Corollary:CorollaryToLemma:TheGeneralCase}
$A_z^\phi$ interchanges $\underline{F}_r$ and $\underline{F}_r^\perp$.
\end{corollary}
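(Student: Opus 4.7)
\emph{Proof plan.} Since each of the partial maps $\phi_{r'}$ lies in $G_*(\mathbb{C}^n)$ and $\phi = \phi_r = \pi_{F_r} - \pi_{F_r}^\perp$, the map $\phi$ itself is an involution: $\phi^2 = (\pi_{F_r} - \pi_{F_r}^\perp)^2 = \pi_{F_r} + \pi_{F_r}^\perp = I$. The strategy is to show that this forces $A_z^\phi$ to anticommute with $\phi$, which is equivalent to interchanging the $\pm1$-eigenspaces $\underline{F}_r$ and $\underline{F}_r^\perp$ of $\phi$.

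First I would differentiate the identity $\phi^2 = I$ with respect to $z$, obtaining $(\partial_z \phi)\cdot\phi + \phi\cdot(\partial_z \phi) = 0$, i.e., $\phi\cdot\partial_z\phi = -(\partial_z\phi)\cdot\phi$. Next, using $\phi^{-1} = \phi$, I would write $A_z^\phi = \tfrac{1}{2}\phi^{-1}\partial_z\phi = \tfrac{1}{2}\phi\,\partial_z\phi$ and verify the anticommutation
\begin{equation*}
\phi\cdot A_z^\phi + A_z^\phi\cdot \phi = \tfrac{1}{2}\bigl[\partial_z\phi + \phi\,\partial_z\phi\cdot\phi\bigr] = \tfrac{1}{2}\bigl[\partial_z\phi - \phi^2\,\partial_z\phi\bigr] = 0,
\end{equation*}
using the relation above in the middle equality.

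Finally, I would translate this anticommutation into the geometric statement. If $v \in \Gamma(\underline{F}_r)$, then $\phi v = v$ and so $\phi\bigl(A_z^\phi v\bigr) = -A_z^\phi(\phi v) = -A_z^\phi v$, which means $A_z^\phi v \in \Gamma(\underline{F}_r^\perp)$, the $(-1)$-eigenbundle of $\phi$. The symmetric argument with $w \in \Gamma(\underline{F}_r^\perp)$ shows $A_z^\phi w \in \Gamma(\underline{F}_r)$, completing the proof.

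There is no real obstacle here: the entire argument is a consequence of the fact that the Cartan embedding of $G_*(\mathbb{C}^n)$ into $\U(n)$ lands in the set of involutions, so the only thing one needs is that $\phi$ itself—not merely some left-translate—is Grassmannian valued, which is exactly what Proposition \ref{Proposition:ComingBackStillInGStar} and the splitting conditions \eqref{Equation:SecondEquationInProposition:SequenceOfMapsAndSubspacesForTheGeneralCase} guarantee.
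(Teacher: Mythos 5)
Your proposal is correct and is essentially the paper's own argument in different packaging: the paper differentiates $\phi=\pi_{\underline{F}_r}-\pi_{\underline{F}_r}^\perp$ directly on a section $f$ of $\underline{F}_r$ via the Leibniz rule, obtaining $2A_z^\phi f=2(\pi_{\underline{F}_r}-\pi_{\underline{F}_r}^\perp)\pi_{\underline{F}_r}^\perp\partial_z f\in\underline{F}_r^\perp$, whereas you derive the equivalent operator identity $\phi A_z^\phi+A_z^\phi\phi=0$ from $\phi^2=I$ and read off the exchange of the $\pm 1$-eigenbundles. Both arguments rest on exactly the same fact---that $\phi$, being Grassmannian-valued via the Cartan embedding, is an involution with eigenbundles $\underline{F}_r$ and $\underline{F}_r^\perp$---so there is no substantive difference.
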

\begin{proof}Recall that $2A_z^\phi=\phi^{-1}\partial_z\phi=(\pi_{\underline{F}_r}-\pi_{\underline{F}_r}^\perp)^{-1}\partial_z(\pi_{\underline{F}_r}-\pi_{\underline{F}_r}^\perp)$. In particular, if $f$ is a section of $\underline{F}_r$, we have that
\[\begin{array}{ll}
2A_z^\phi f&=(\pi_{\underline{F}_r}-\pi_{\underline{F}_r}^\perp)\partial_z(\pi_{\underline{F}_r}-\pi_{\underline{F}_r}^\perp)f=(\pi_{\underline{F}_r}-\pi_{\underline{F}_r}^\perp)(\partial_zf-\pi_{\underline{F}_r}\partial_zf+\pi_{\underline{F}_r}^\perp\partial_zf)\\
~&=2(\pi_{\underline{F}_r}-\pi_{\underline{F}_r}^\perp)(\pi_{\underline{F}_r}^\perp\partial_zf)\in \underline{F}_{r}^\perp.
\end{array}\]
If $f$ is a section of $\underline{F}_r^\perp$, the argument is similar.
\end{proof}

Let $S^i_j$ denote the sum of all ordered $i$-fold products of the form $\Pi_i\cdots\Pi_1$, where exactly $j$ of the $\Pi_{l}$ are $\pi_{l}^{\perp}$ and the other $i-j$ are $\pi_{l}$.  For $i=0$, set $S^{i}_{j}=I$, for $i<0$ or $j>i>0$, set $S^{i}_{j}=0$. Then, \cite{FerreiraSimoesWood:09}
\[S^i_j=\pi_i S^{i-1}_j+\pi_i^\perp S^{i-1}_{j-1}\]
and the $S^i_j$ are related with the $C^i_j$ by the formulae
\[C^i_k=\sum_{s=k}^i\binom{s}{k}S^i_s,\]
where $\binom{i}{s}$ denotes the binomial coefficient $i!/s!(i-s)!$.

\begin{lemma}\label{Lemma:ProjectionsOfSijGeneralCase}
Let $\underline{F}_i$, $1\leq i\leq r$ be defined as in \eqref{Equation:SecondEquationInProposition:SequenceOfMapsAndSubspacesForTheGeneralCase}. Then, if $\underline{F}_i$ decomposes $\underline{\alpha}_{i+1}$ for all $1\leq i\leq r-1$,
\begin{equation}\label{Equation:FirstEquationIn:Lemma:ProjectionsOfSij}
\begin{array}{ll}
S^i_j\pi_{F_0} A\in \underline{F}_i&\text{ if $j$ even}\\
S^i_j\pi_{F_0} A\in \underline{F}_i^\perp&\text{ if $j$ odd}\\
S^i_j\pi_{F_0}^\perp A\in \underline{F}_i^\perp&\text{ if $j$ even}\\
S^i_j\pi_{F_0}^\perp A\in \underline{F}_i&\text{ if $j$ odd},\\
\end{array}
\end{equation}
for any $A$.
\end{lemma}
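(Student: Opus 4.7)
The plan is to prove the lemma by induction on $i$, exploiting the Pascal-like recursion $S^i_j=\pi_i S^{i-1}_j+\pi_i^\perp S^{i-1}_{j-1}$ stated just before the lemma, together with the splitting hypothesis.

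The base case $i=0$ is trivial, since $S^0_0=\Id$ and $\underline{F}_0:=F_0$. The inductive step rests on a preliminary observation: the assumption that $\underline{F}_{i-1}$ decomposes $\underline{\alpha}_i$ is equivalent to $[\pi_i,\pi_{F_{i-1}}]=0$, hence also $[\pi_i^\perp,\pi_{F_{i-1}}^\perp]=0$, so $\mathbb C^n$ splits orthogonally into the four pieces $\underline{F}_{i-1}\cap\underline{\alpha}_i$, $\underline{F}_{i-1}^\perp\cap\underline{\alpha}_i^\perp$, $\underline{F}_{i-1}^\perp\cap\underline{\alpha}_i$, $\underline{F}_{i-1}\cap\underline{\alpha}_i^\perp$. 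From the recursive definition \eqref{Equation:SecondEquationInProposition:SequenceOfMapsAndSubspacesForTheGeneralCase} we read off
\[\pi_i(\underline{F}_{i-1})\subseteq\underline{F}_i,\qquad \pi_i^\perp(\underline{F}_{i-1}^\perp)\subseteq\underline{F}_i,\qquad \pi_i(\underline{F}_{i-1}^\perp)\subseteq\underline{F}_i^\perp,\qquad \pi_i^\perp(\underline{F}_{i-1})\subseteq\underline{F}_i^\perp.\]
In short, $\pi_i$ preserves the pair $(\underline{F}_{i-1},\underline{F}_{i-1}^\perp)$ while $\pi_i^\perp$ swaps the labels, all landing inside the corresponding summand of $\underline{F}_i$ or $\underline{F}_i^\perp$.

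With this in hand, the inductive step is mechanical. Fix $j$ even and apply the recursion to $S^i_j\pi_{F_0}A$: by the inductive hypothesis $S^{i-1}_j\pi_{F_0}A\in\underline{F}_{i-1}$ (since $j$ is even) and $S^{i-1}_{j-1}\pi_{F_0}A\in\underline{F}_{i-1}^\perp$ (since $j-1$ is odd); the four displayed inclusions then give $\pi_i S^{i-1}_j\pi_{F_0}A\in\underline{F}_i$ and $\pi_i^\perp S^{i-1}_{j-1}\pi_{F_0}A\in\underline{F}_i$, whose sum is $S^i_j\pi_{F_0}A\in\underline{F}_i$. The three remaining parity combinations (namely $j$ odd with $\pi_{F_0}$, $j$ even with $\pi_{F_0}^\perp$, and $j$ odd with $\pi_{F_0}^\perp$) are treated by exactly the same argument, and in each case the two summands coming from the recursion land in the same summand $\underline{F}_i$ or $\underline{F}_i^\perp$ dictated by the parity rule in \eqref{Equation:FirstEquationIn:Lemma:ProjectionsOfSij}. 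Nothing is subtle here once the four transfer rules for $\pi_i,\pi_i^\perp$ between $\underline{F}_{i-1}^{(\perp)}$ and $\underline{F}_i^{(\perp)}$ are established.

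The only delicate point, and hence the \emph{main obstacle}, is precisely the verification of those four transfer rules, i.e., showing that the splitting hypothesis $\underline{\alpha}_i=\underline{\alpha}_i\cap\underline{F}_{i-1}\oplus\underline{\alpha}_i\cap\underline{F}_{i-1}^\perp$ forces $\pi_i$ to commute with $\pi_{F_{i-1}}$, and then identifying the four pairwise intersections explicitly inside $\underline{F}_i$ and $\underline{F}_i^\perp$. Once this piece of bookkeeping is in place, the lemma follows by a direct induction on $i$, reducing at each step to a parity check on the indices arising from the $S^i_j$ recursion.
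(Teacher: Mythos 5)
Your proof is correct and follows essentially the same route as the paper's: induction on $i$ via the recursion $S^i_j=\pi_i S^{i-1}_j+\pi_i^\perp S^{i-1}_{j-1}$, with the splitting hypothesis guaranteeing that $\pi_i$ and $\pi_i^\perp$ carry $\underline{F}_{i-1}$ and $\underline{F}_{i-1}^\perp$ into the correct summands of $\underline{F}_i$ or $\underline{F}_i^\perp$. Your four explicit ``transfer rules'' are exactly what the paper uses implicitly (your version even makes plain that in the odd-$j$ case the sum lands in $\underline{F}_{i}^\perp$, where the paper's displayed conclusion has an apparent typo writing $\underline{F}_{r+1}$ instead of $\underline{F}_{r+1}^\perp$).
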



\begin{proof} For the case $r=1$, assume that $F_0$ splits $\underline{\alpha}_1$. Let us show that \eqref{Equation:FirstEquationIn:Lemma:ProjectionsOfSij} holds. As a matter of fact:
\[\begin{array}{ll}
S^1_0\pi_{F_0} A=\pi_1\pi_{F_0} A\in \underline{F}_1&\text{ ($j$ even)}\\
S^1_1\pi_{F_0} A=\pi_1^\perp\pi_{F_0} A \in \underline{F}_1^\perp&\text{ ($j$ odd)}\\
S^1_0\pi_{F_0}^\perp A=\pi_1\pi_{F_0}^\perp A\in \underline{F}_1^\perp&\text{ ($j$ even)}\\
S^1_1\pi_{F_0}^\perp A=\pi_1^\perp\pi_{F_0}^\perp A\in \underline{F}_1&\text{ ($j$ odd)}\\
\end{array}\]

Let us now establish the induction: assume the result holds up to $r$ and that $\underline{F}_r$ splits $\underline{\alpha}_{r+1}$. Then,
\begin{equation}\label{Equation:FirstEquationINTheProofOfLemma:ProjectionsOfSijGeneralCase}
S^{r+1}_j\pi_{F_0} A=\pi_{r+1}S^r_j\pi_{F_0} A+\pi_{r+1}^\perp S^r_{j-1}\pi_{F_0} A.
\end{equation}
If $j$ is odd, $S^r_j\pi_{F_0} A\in \underline{F}_r^\perp$. Since $j-1$ is even, $S^r_{j-1}\pi_{F_0} A\in \underline{F}_r$. Hence, \eqref{Equation:FirstEquationINTheProofOfLemma:ProjectionsOfSijGeneralCase} becomes $\pi_{r+1}\pi_{F_r}^\perp S^r_j \pi_{F_0}A+\pi_{r+1}^\perp\pi_{F_r} S^r_{j-1}\pi_{F_0} A \in \underline{F}_{r+1}$. The remaining cases have similar proofs.

\end{proof}

We know that a harmonic map is obtained as the product of unitons $\underline{\alpha}_i$ with $\underline{\alpha}_i$ given as in Theorem \ref{Theorem:Theorem11FromFerSimWood09}. To obtain maps into $G_*(\mathbb C^n)$, we must impose the following algebraic conditions on the meromorphic data $H_{i,j}$:

\begin{proposition}\label{Proposition:GivingTheRightHsInTheGeneralCase}
Let $(H_{i,j})_{0\leq i\leq r-1, 1\leq j\leq n}$ be chosen in such a way that for all $j$, either $\pi_{F_0}(H_{0,j})=0$ and
\begin{equation}\label{Equation:EquationInProposition:GivingTheRightHsInTheGeneralCase}
\left\{
\begin{array}{l}
\pi_{F_0}(\displaystyle{\sum_{s=1}^i\binom{i-1}{s-1}H_{s,j})}=0,\text{ $i$ even},\\
\pi_{F_0}^\perp(\displaystyle{\sum_{s=1}^i\binom{i-1}{s-1}H_{s,j})}=0,\text{ $i$ odd}.
\end{array}
\right.
\end{equation}
or $\pi_{F_0}^\perp(H_{0,j})=0$ and \eqref{Equation:EquationInProposition:GivingTheRightHsInTheGeneralCase} holds, now with $\pi_{F_0}$ replaced with $\pi_{F_0}^\perp$.

Then $\phi=(\pi_{F_0}-\pi_{F_0}^\perp)(\pi_1-\pi_1^\perp)...(\pi_{r}-\pi_{r}^\perp)$, with $\underline{\alpha}_i$ given as in \eqref{Equation:FirstEquationInTheorem:Theorem11FromFerSimWood09}, lies in $G_*(\fieldC^n)$.

Moreover, for each $0\leq i\leq r-1$, $\phi_{i}=\pi_{F_{i}}-\pi_{F_{i}}^\perp$ with:

(i) $\underline{F}_{i}\cap\underline{\alpha}_{i+1}\subseteq\underline{F}_{i+1}$ spanned by $\underline{\alpha}_{i+1,j}^{(k)}$, where $j$ and $k$ are such that $k$ is even and $\pi_{F_0}^\perp(H_{0,j})=0$ or $k$ is odd and $\pi_{F_0}(H_{0,j})=0$;

(ii) $\underline{F}_{i}^\perp\cap\underline{\alpha}_{i+1}\subseteq \underline{F}_{i+1}^\perp$ spanned by $\underline{\alpha}_{i+1,j}^{(k)}$, where $j$ and $k$ are such that $k$ is odd and $\pi_{F_0}^\perp(H_{0,j})=0$ or $k$ is even and $\pi_{F_0}(H_{0,j})=0$.
\end{proposition}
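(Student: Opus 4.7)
I would proceed by induction on $i$, establishing simultaneously at each step that $\phi_i=\pi_{F_i}-\pi_{F_i}^\perp$ lies in $G_*(\mathbb C^n)$, that $\underline{F}_i$ decomposes $\underline{\alpha}_{i+1}$, and that (i), (ii) hold at level $i$. The base case $i=0$ is immediate, since $\alpha_{1,j}^{(0)}=H_{0,j}$ lies in $F_0\cap\underline{\alpha}_1$ or $F_0^\perp\cap\underline{\alpha}_1$ according to the two alternatives in the hypothesis. The inductive step reduces, via Proposition~\ref{Proposition:DecompositionIntoHoloAndAntiHoloPartsOfHarmonicMapsIntoGStarCn}, to checking that each spanning vector $\alpha_{i+1,j}^{(k)}$ sits in $F_i$ or $F_i^\perp$ as dictated by the parity conditions in the statement.

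For each fixed $j$, I would first reduce to the case $k=0$ by using Corollary~\ref{Corollary:CorollaryToLemma:TheGeneralCase} (so that $A_z^{\phi_i}$ interchanges $F_i$ and $F_i^\perp$) together with the identity $A_z^{\phi_i}(\alpha_{i+1,j}^{(k)})=-\alpha_{i+1,j}^{(k+1)}$ from Theorem~\ref{Theorem:Theorem11FromFerSimWood09}: the location of $\alpha_{i+1,j}^{(k)}$ then simply alternates with the parity of $k$, starting from where $\alpha_{i+1,j}^{(0)}$ sits.

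The central computation is the recurrence
\[
\alpha_{i+1,j}^{(0)}=\alpha_{i,j}^{(0)}+\pi_i^\perp T_{i,j},\qquad T_{i,j}=\sum_{m=0}^{i-1}S^{i-1}_m K_{m+1,j}.
\]
To derive it I would expand $C^i_s=\sum_{l=s}^i\binom{l}{s}S^i_l$ in $\alpha_{i+1,j}^{(0)}=\sum_s C^i_s H_{s,j}$, use the combinatorial identity $\sum_{s=t}^l(-1)^{s-t}\binom{l}{s}\binom{s-1}{t-1}=1$ (for $1\le t\le l$) to rewrite the result as $\alpha_{i+1,j}^{(0)}=\sum_l S^i_l V_{l,j}$ with the clean cumulative sum $V_{l,j}=\sum_{t=0}^l K_{t,j}$, and then invoke $S^i_l=\pi_i S^{i-1}_l+\pi_i^\perp S^{i-1}_{l-1}$ together with $V_{l,j}=V_{l-1,j}+K_{l,j}$ and $\pi_i^\perp\alpha_{i,j}^{(0)}=0$. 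Assuming for definiteness $\pi_{F_0}(H_{0,j})=0$, the $F_0$-array condition forces $K_{t,j}\in F_0^\perp$ for $t$ even and $K_{t,j}\in F_0$ for $t$ odd. The inductive hypothesis then places $\alpha_{i,j}^{(0)}\in F_{i-1}^\perp\cap\underline{\alpha}_i\subseteq F_i^\perp$; and for each $m$ in the sum defining $T_{i,j}$, the parity pairing (either $m$ even with $K_{m+1,j}\in F_0$, or $m$ odd with $K_{m+1,j}\in F_0^\perp$) places $S^{i-1}_m K_{m+1,j}$ in $F_{i-1}$ by Lemma~\ref{Lemma:ProjectionsOfSijGeneralCase}. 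Since $F_{i-1}$ decomposes $\underline{\alpha}_i$, the projection $\pi_i^\perp$ preserves $F_{i-1}$, so $\pi_i^\perp T_{i,j}\in F_{i-1}\cap\underline{\alpha}_i^\perp\subseteq F_i^\perp$, and therefore $\alpha_{i+1,j}^{(0)}\in F_i^\perp$ as required by (ii); case (i) is symmetric.

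\textbf{Main obstacle.} The crux is isolating the recurrence above: once it is in hand, the parity bookkeeping via Lemma~\ref{Lemma:ProjectionsOfSijGeneralCase} is immediate. The combinatorial identity that collapses the mixed sums of $H$'s into the cumulative $V_{l,j}=\sum_{t=0}^l K_{t,j}$ is what makes the alternating $F_0$-array hypothesis translate directly into the $F_0$/$F_0^\perp$ input required by the Lemma. Once (i), (ii) are proved at level $i$, the spanning vectors of $\underline{\alpha}_{i+1}$ all lie in $F_i\cup F_i^\perp$, so $\underline{F}_i$ decomposes $\underline{\alpha}_{i+1}$, giving $\phi_{i+1}\in G_*(\mathbb C^n)$ and the claimed formula for $\underline{F}_{i+1}$, which closes the induction.
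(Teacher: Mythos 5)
Your proposal is correct and follows essentially the same route as the paper's proof: induction on the number of unitons, the recurrence $\alpha_{i+1,j}^{(0)}=\alpha_{i,j}^{(0)}+\pi_i^\perp\bigl(\sum_{m} S^{i-1}_m K_{m+1,j}\bigr)$ obtained from the Pascal-type identities for the $C^i_s$ and $S^i_l$, parity placement of each term via Lemma~\ref{Lemma:ProjectionsOfSijGeneralCase}, and the reduction of $k>0$ to $k=0$ through $\alpha_{i+1,j}^{(k)}=-A_z^{\phi_i}(\alpha_{i+1,j}^{(k-1)})$ and Corollary~\ref{Corollary:CorollaryToLemma:TheGeneralCase}. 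The only cosmetic difference is that you invert the $H$'s into the cumulative data $K_{i,j}=\sum_{s=1}^i\binom{i-1}{s-1}H_{s,j}$ at the outset (via your binomial identity), whereas the paper inserts $\pi_{F_0}+\pi_{F_0}^\perp$ and lets the hypothesis \eqref{Equation:EquationInProposition:GivingTheRightHsInTheGeneralCase} annihilate half the terms directly --- the same computation organized differently, and consistent with the equivalence the paper itself records in the proof of Theorem~\ref{Theorem:MainTheoremNonBundleVersion}.
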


\begin{proof}
For $r=1$, it is trivial. For the case $r=2$, our initial data satisfies, for each $j$, $\pi_{F_0}(H_{0,j})=0$ and $\pi_{F_0}^\perp(H_{1,j})=0$ or $\pi_{F_0}^\perp(H_{0,j})=0$ and $\pi_{F_0}(H_{1,j})=0$. Moreover, $\underline{\alpha}_2$ is spanned by $\alpha_{2,j}^{(0)}$ and $A_z^{\phi_1}(\alpha_{2,j}^{(0)})$. Now, $\alpha_{2,j}^{(0)}$ is either of the form $H_{0,j}+\pi_1^\perp H_{1,j}$ with $H_{0,j}$ in $F_0$ and $H_{1,j}$ in $F_0^\perp$ or $H_{0,j}+\pi_1^\perp H_{1,j}$ with $H_{0,j}$ in $F_0^\perp$ and $H_{1,j}$ in $F_0$. In the first case, $\alpha_{2,j}^{(0)}$ is a section of $\underline{F}_1$ (and of $\underline{\alpha}_2$ so that it is a section of $\underline{F}_2$) whereas in the second case we have a section of $\underline{F}_1^\perp$ (and hence of $\underline{F}_1^\perp\cap\underline{\alpha}_2\subseteq\underline{F}_2^\perp$). Since $A_z^{\phi_1}$ interchanges $\underline{F}_1$ with $\underline{F}_1^\perp$, we conclude that $\underline{F}_1\cap\underline{\alpha}_2$ is spanned by $\alpha_{2,j}^{(0)}$, for $j$ such that $\pi_{F_0}^\perp(H_{0,j})=0$, and by $\alpha_{2,j}^{(1)}=-A_z^{\phi_1}(\alpha_{2,j}^{(0)})$, for $j$ such that $\pi_{F_0}(H_{0,j})=0$.

Let us show the induction step: assume the result holds up to $r$. Without loss of generality, assume that $j$ is such that $\pi_{F_0}^\perp H_{0,j}=0$. Then, $\alpha_{r,j}^{(0)}$ lies in $\underline{F}_r$ and
\[\begin{array}{ll}\alpha_{r+1,j}^{(0)}&=\displaystyle{\alpha_{r,j}^{(0)}+\pi_r^\perp\big(\sum_{t=0}^{r-1}C_t^{r-1}H_{t+1,j}\big)}\\
~&=\displaystyle{\alpha_{r,j}^{(0)}+\pi_r^\perp\big(\sum_{t=0}^{r-1}S_{t}^{r-1}(\pi_{F_0}+\pi_{F_0}^\perp)\sum_{s=0}^t\binom{t}{s}H_{s+1,j}\big)}\\
~&=\displaystyle{\alpha_{r,j}^{(0)}+\pi_r^\perp\big(
\sum_{\text{\renewcommand{\arraystretch}{0.5}\tiny{$\begin{array}{l}t=0\\t\text{~odd}\end{array}$}\renewcommand{\arraystretch}{1}}}^{r-1}S_{t}^{r-1}\pi_{F_0}\sum_{s=1}^{t+1}\binom{t}{s-1}H_{s,j}+
\sum_{\text{\renewcommand{\arraystretch}{0.5}\tiny{$\begin{array}{l}t=0\\t\text{~even}\end{array}$}\renewcommand{\arraystretch}{1}}}^{r-1}S_{t}^{r-1}\pi_{F_0}^\perp\sum_{s=1}^{t+1}\binom{t}{s-1}H_{s,j}\big)}\\
~&+\displaystyle{\pi_r^\perp\big(\sum_{\text{\renewcommand{\arraystretch}{0.5}\tiny{$\begin{array}{l}t=0\\t\text{~odd}\end{array}$}\renewcommand{\arraystretch}{1}}}^{r-1}S_{t}^{r-1}\pi_{F_0}^\perp\sum_{s=1}^{t+1}\binom{t}{s-1}H_{s,j}+
\sum_{\text{\renewcommand{\arraystretch}{0.5}\tiny{$\begin{array}{l}t=0\\t\text{~even}\end{array}$}\renewcommand{\arraystretch}{1}}}^{r-1}S_{t}^{r-1}\pi_{F_0}\sum_{s=1}^{t+1}\binom{t}{s-1}H_{s,j}\big)}.
\end{array}\]
Using Lemma \ref{Lemma:ProjectionsOfSijGeneralCase}, the first two terms lie in $\underline{F}_r$ whereas the last vanishes from our hypothesis. Hence $\alpha_{r+1,j}^{(0)}\in \underline{F}_r\cap\underline{\alpha}_{r+1}\subseteq \underline{F}_{r+1}$. Since $\alpha_{r+1,j}^{(k)}=-A_z^{\phi_r}(\alpha_{r+1,j}^{(k-1)})$ and $A_z^{\phi_r}$ interchanges $\underline{F}_r$ and $\underline{F}_r^\perp$ the conclusion now easily follows.
\end{proof}

\textit{Proof of Theorem \ref{Theorem:MainTheoremNonBundleVersion}.}
Let $F_0$ be a constant subspace of $\fieldC^n$ and $(K_{i,j})_{0\leq i\leq r-1,1\leq j\leq n}$ denote a $F_0$-array. Let $H_{i,j}$ be defined as in \eqref{Equation:FirstEquationInTheorem:MainTheoremNonBundleVersion}. It is easily seen that these equations are equivalent to
\[\begin{array}{l}
H_{0,j}=K_{0,j}\text{ and}\\
K_{i,j}=\displaystyle{\sum_{s=1}^i\binom{i-1}{s-1}H_{s,j}.}
\end{array}
\]
From Proposition \ref{Proposition:GivingTheRightHsInTheGeneralCase}, we conclude that if $\phi$ is given as in Theorem \ref{Theorem:MainTheoremNonBundleVersion}, $\phi$ is harmonic and has values in $G_*(\fieldC^n)$. It remains to prove the converse, that all harmonic maps into $G_*(\fieldC^n)$ with finite uniton number can be given this way.

If $\phi$ has uniton number $1$, $\phi=(\pi_{F_0}-\pi_{F_0}^\perp)(\pi_1-\pi_1^\perp)$. Hence, $\phi$ lies in $G_*(\fieldC^n)$ if and only if $F_0$ splits $\underline{\alpha}_1$. But $\underline{\alpha}_1$ is spanned by some collection of $H_{i,j}$. Hence, it must be that we can choose the spanning set taking values either in $F_0$ or in $F_0^\perp$. In that case, $\phi=\pi_{F_1}-\pi_{F_1}^\perp$. If $\phi$ has uniton number $2$, then $\phi=(\pi_{F_1}-\pi_{F_1}^\perp)(\pi_2-\pi_2^\perp)$. Hence, $\phi$ takes values in $G_*(\fieldC^n)$ if and only if $\underline{F}_1$ splits $\underline{\alpha}_2$. But $\underline{\alpha}_2$ is spanned by vectors of the form $H_{0,j}+\pi_1^\perp H_{1,j}$ (and $A_z^{\phi_1}(H_{0,j}+\pi_1^\perp H_{1,j})$). Since $\underline{F}_1$ splits $\underline{\alpha}_2$, we must have $\pi_{F_1}(\underline{\alpha}_2)$ and $\pi_{F_1}^\perp(\underline{\alpha}_2)$ lying in $\underline{\alpha}_2$. Now, if $H_{0,j}$ lies in $F_0$ ($\pi_{F_0}^\perp H_{0,j}=0$), then it lies in $F_0\cap\underline{\alpha}_1\subseteq\underline{F}_1$. Hence,
\[\pi_{F_1}^\perp(H_{0,j}+\pi_1^\perp H_{1,j})=\pi_{F_1}^\perp(H_{0,j}+\pi_1^\perp\pi_{F_0}^\perp H_{1,j}+\pi_1^\perp\pi_{F_0}H_{1,j})=\pi_1^\perp\pi_{F_0} H_{1,j}\]
lies in $\underline{\alpha}_2$. Write $\tilde{H_1}=\pi_{F_0}(H_{1,j})$. Then, $\underline{\alpha}_2$ is spanned by $\pi_1^\perp\tilde{H}_{i,j}$ and $H_{0,j}+\pi_1^\perp \hat{H}_{1,j}$, where $\hat{H}_{1,j}=H_{1,j}-\tilde{H}_{1,j}$ lies in $F_0^\perp$.

In general, assume that $\pi_{F_0}^\perp H_{0,j}=0$ and $r$ is odd (the remaining cases are similar). Write
\[\begin{array}{ll}
\alpha_{r+1,j}^{(0)}&=\displaystyle{\alpha_{r,j}^{(0)}+\pi_r^\perp\Big(\sum_{t=0}^{r-1} C_t^{r-1}H_{t+1,j}\Big)}\\
~&=\displaystyle{\alpha_{r,j}^{(0)}+\pi_r^\perp\Big(\sum_{t=0}^{r-1}S_{t}^{r-1}(\pi_{F_0}+\pi_{F_0}^\perp)\sum_{s=0}^t\binom{t}{s}H_{s+1,j}\Big)}.\end{array}\]
By the induction hypothesis, $\alpha_{r,j}^{(0)}$ lies in $\underline{F}_{r}$. By Lemma \ref{Lemma:ProjectionsOfSijGeneralCase}, if $t$ is even,
\[\pi_r^\perp\Big(S_{t}^{r-1}\pi_{F_0}\sum_{s=0}^t\binom{t}{s}H_{s+1,j}\Big)\]
lies in $\underline{F}_r^\perp$ and
\[\pi_r^\perp\Big(S_{t}^{r-1}\pi_{F_0}^\perp\sum_{s=0}^t\binom{t}{s}H_{s+1,j}\Big)\]
lies in $\underline{F}_r$. For $t$ odd, changing the roles of $F_0$ and $F_0^\perp$ we get the same conclusion.

Since $\underline{F}_r$ splits $\underline{\alpha}_{r+1}$, we must have $\pi_{F_r}^\perp(\alpha_{r+1,j}^{(0)})$ and $\pi_{F_r}(\alpha_{r+1,j}^{(0)})$ in $\underline{\alpha}_{r+1}$. But
\[\pi_{F_r}^\perp(\alpha_{r+1,j}^{(0)})=\pi_r^\perp\Big(\sum_{\text{\renewcommand{\arraystretch}{0.5}\tiny{$\begin{array}{l}t=0\\t\text{~even}\end{array}$}\renewcommand{\arraystretch}{1}}}^{r-1}S_{t}^{r-1}\pi_{F_0}\sum_{s=0}^t\binom{t}{s}H_{s+1,j}\Big)+\pi_r^\perp\Big(\sum_{\text{\renewcommand{\arraystretch}{0.5}\tiny{$\begin{array}{l}t=0\\t\text{~odd}\end{array}$}\renewcommand{\arraystretch}{1}}}^{r-1}S_{t}^{r-1}\pi_{F_0}^\perp\sum_{s=0}^t\binom{t}{s}H_{s+1,j}\Big)\]
lies in $\underline{\alpha}_{r+1}$. By the induction hypothesis,
\[\sum_{\text{\renewcommand{\arraystretch}{0.5}\tiny{$\begin{array}{l}t=0\\ t\text{ even}\end{array}$}\renewcommand{\arraystretch}{1}}}^{r-1}\pi_{F_0}\sum_{s=0}^t\binom{t}{s}H_{s+1,j}=S_{r-1}^{r-1}\pi_{F_0}\sum_{s=0}^{r-1}\binom{r-1}{s}H_{s+1,j}\]
and
\[\sum_{\text{\renewcommand{\arraystretch}{0.5}\tiny{$\begin{array}{l}t=0\\ t\text{ odd}\end{array}$}\renewcommand{\arraystretch}{1}}}^{r-1}\pi_{F_0}^{\perp}\sum_{s=0}^s\binom{t}{s}H_{s+1,j}
=\sum_{\renewcommand{\arraystretch}{0.5}\text{\tiny{$\begin{array}{l}t=0\\ t\text{ odd}\end{array}$}\renewcommand{\arraystretch}{1}}}^{r-2}\pi_{F_0}^{\perp}\sum_{s=0}^t\binom{t}{s}H_{s+1,j}=0.\]
Hence
\[\pi_r^\perp S_{r-1}^{r-1}\pi_{F_0}\sum_{s=0}^{r-1}\binom{r-1}{s}H_{s+1,j}=C_r^r\pi_{F_0}\sum_{s=0}^{r-1}\binom{r-1}{s}H_{s+1,j}\]
lies in $\underline{\alpha}_{r+1}$.

Take $\tilde{H}_j$ the holomorphic vector field given by
\[\tilde{H}_j=\pi_{F_0}\sum_{s=0}^{r-1}\binom{r-1}{s}H_{s+1,j}.\]
Then we can write
\[\begin{array}{ll}\pi_{F_r}(\alpha_{r+1,j}^{(0)})&=\alpha_{r+1,j}^{(0)}-\pi_{F_r}^\perp(\alpha_{r+1,j}^{(0)})\\
~&=\alpha_{r+1,j}^{(0)}-\pi_{r}^\perp\left(C^{r-1}_0H_{1,j}+C^{r-1}_1H_{2,j}+...+C^{r-1}_{r-1}(H_{r,j}-\tilde{H}_{r,j})\right).\end{array}\]
Writing $\hat{H}_{r,j}=H_{r,j}-\tilde{H}_{r,j}$, we have
\[\wordspan\{\alpha_{r+1,j}^{(0)}\}=\wordspan\{C^r_r\tilde{H}_{r,j},C^r_0 H_{0,j}+...+C^{r}_{r-1}H_{r-1,j}+C^r_r\hat{H}_{r,j}\}.\]
We shall check that this new holomorphic data satisfies our conditions. As a matter of fact, $\pi_{F_0}^\perp(H_{0,j})=0$ and
\[\pi_{F_0}\Big(\sum_{s=1}^{r-1}\binom{r-1}{s-1}H_{s,j}+\hat{H}_{r,j}\Big)=\pi_{F_0}\Big(\sum_{s=1}^{r}\binom{r-1}{s-1}H_{s,j}\Big)-\tilde{H}_{r,j}=0.\]
Also, $\pi_{F_0}(0)=0$ and $\pi_{F_0}^\perp(\tilde{H}_{r,j})=0$, concluding our proof.
\qed

\bigskip

\textit{Proof of Proposition \ref{Proposition:DecompositionIntoHoloAndAntiHoloPartsOfHarmonicMapsIntoGStarCn}.}
We must show that $\phi\cap\underline{\alpha}$ and $\phi\cap\underline{\alpha}^\perp$ are, respectively, holomorphic and anti-holomorphic subbundles of $(\fieldC^n,D^\phi_{\bar{z}})$. From Proposition \ref{Proposition:GivingTheRightHsInTheGeneralCase} we know $D_{\bar{z}}^\phi$-holomorphic basis for $\phi\cap\underline{\alpha}$ and for $\phi\cap\underline{\alpha}^\perp$. Hence,
\[D_{\bar{z}}^\phi(\phi\cap\underline{\alpha})\subseteq\phi\cap\underline{\alpha}\text{ and }D_{\bar{z}}^\phi(\phi\cap\underline{\alpha}^\perp)\subseteq\phi\cap\underline{\alpha}^\perp.\]
Since $D_z^\phi\underline{\alpha}^\perp\subseteq\underline{\alpha}^\perp$, the result follows from the identity
\[<D_z^\phi(\phi^\perp\cap\underline{\alpha}^\perp),\phi\cap\underline{\alpha}^\perp>=<\phi^\perp\cap\underline{\alpha}^\perp,D_{\bar{z}}^\phi(\phi\cap\underline{\alpha}^\perp)>=0.\]

\qed

\bigskip
In order to prove Theorem \ref{Theorem:MainTheoremForUnitonCounting}, we start with the following Lemma.

\begin{lemma}\label{Lemma:LemmaToProveTheorem:MainTheoremForUnitonCounting}
Let $r \in \{1,...,n-1)$, $F_0$ be a $k$-dimensional subspace of $\mathbb C^n$ and consider a pair $(L,S)$ adapted to $F_0$. For any $F_0$-array $(K_{i,j})_{0\leq i\leq r-1, 1\leq j\leq n}$  which matches $(L,S)$,

(i) $\displaystyle{\wordrank(\underline{\alpha}_{i+1})=\wordrank(\underline{\alpha}_i)+\sum_{t=0}^{i}(l_t^{i-t}+s_t^{i-t})}$;

(ii) $\wordrank(\underline{\alpha}_{i+1}\cap\underline{F}_{i})=\left\{
\begin{array}{l}
\displaystyle{\wordrank(\underline{\alpha}_{i}\cap\underline{F}_{i-1})+\sum_{t=0}^{i}l_{t}^{i-t}\text{, if $i$ even}}\\
\displaystyle{\wordrank(\underline{\alpha}_{i}\cap\underline{F}_{i-1})+\sum_{t=0}^{i}s_{t}^{i-t}\text{, if $i$ odd}};
\end{array}
\right.$

(iii) $\wordrank(\underline{\alpha}_{i+1}\cap\underline{F}_i^\perp)=\left\{
\begin{array}{l}
\displaystyle{\wordrank(\underline{\alpha}_{i}\cap \underline{F}^{\perp}_{i-1})+\sum_{t=0}^{i}s_{t}^{i-t}\text{, if $i$ even}}
\\
\displaystyle{\wordrank(\underline{\alpha}_{i}\cap \underline{F}^{\perp}_{i-1})+\sum_{t=0}^{i}l_{t}^{i-t}\text{, if $i$ odd}}.
\end{array}\right.$

for each $i\in \{1,...,r-1\}$.
\end{lemma}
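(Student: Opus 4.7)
My plan is to proceed by induction on $i$, starting from the trivial base case $i=0$, where $\underline{\alpha}_1=\wordspan\{K_{0,j}\}$ and the three identities follow immediately from condition (ii) of Definition~\ref{Definition:ArrayMatchingPair} and the splitting of $K_{0,j}$ into $L$-type and $E$-type vectors (using the convention $\underline{\alpha}_0\cap\underline{F}_{-1}=\{0\}$). The inductive step rests on a recursive relation between the generators at consecutive levels.

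For the inductive step, I plan to apply Pascal's identity \eqref{Equation:PascalIdentityForCis} to the generator formula \eqref{Equation:FirstEquationInTheorem:Theorem11FromFerSimWood09} to obtain
\[\alpha_{i+1,j}^{(k)}=\alpha_{i,j}^{(k)}+\pi_i^{\perp}\sum_{s=k-1}^{i-1}C_s^{i-1}H_{s-k+1,j}^{(k)},\]
so that, modulo $\underline{\alpha}_i$, each new generator is represented by its leading term $C_i^iH_{i-k,j}^{(k)}$. Translating these leading terms back in terms of the $K$-data via \eqref{Equation:FirstEquationInTheorem:MainTheoremNonBundleVersion}, and letting $k$ range from $0$ to $i$, condition (iii) of Definition~\ref{Definition:ArrayMatchingPair} supplies exactly the ranks $\sum_{t=0}^{i}l_t^{i-t}$ from the $L$-type entries and $\sum_{t=0}^{i}s_t^{i-t}$ from the $E$-type entries. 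Adding the two contributions yields (i).

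For (ii) and (iii), I intend to invoke Proposition~\ref{Proposition:GivingTheRightHsInTheGeneralCase}, which tells us that a generator $\alpha_{i+1,j}^{(k)}$ lies in $\underline{F}_i\cap\underline{\alpha}_{i+1}$ precisely when the parity of $k$ is compatible with the type ($L$ or $E$) of $H_{0,j}$ in the prescribed way, and in $\underline{F}_i^{\perp}\cap\underline{\alpha}_{i+1}$ in the opposite case. For $i$ even this selects the $L$-type contributions among the new generators and gives the increment $\sum_{t=0}^{i}l_t^{i-t}$ in (ii); the case $i$ odd, as well as the orthogonal-complement statement (iii), follow by the same parity bookkeeping.

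The main obstacle I anticipate is justifying that the leading terms $C_i^iH_{i-k,j}^{(k)}$ are linearly independent \emph{modulo} $\underline{\alpha}_i$, rather than merely as abstract sections of $\underline{\fieldC}^n$. I plan to handle this by iterating Pascal's identity so as to absorb the remaining lower-order corrections $\pi_i^{\perp}C_s^{i-1}H_{t,j}^{(k)}$ (with $s<i-1$) into $\underline{\alpha}_i$ via the inductive hypothesis, at which point the linear independence reduces exactly to the rank statement already built into condition (iii) of the matching definition.
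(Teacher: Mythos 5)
There is a genuine gap at the heart of your counting step. After Pascal's identity you correctly obtain
\[\alpha_{i+1,j}^{(k)}=\alpha_{i,j}^{(k)}+\pi_i^{\perp}\sum_{s=k-1}^{i-1}C_s^{i-1}H_{s-k+1,j}^{(k)},\]
but your plan to treat each generator as its leading term $C_i^iH_{i-k,j}^{(k)}$ \emph{modulo} $\underline{\alpha}_i$, absorbing the corrections $\pi_i^{\perp}C_s^{i-1}H_{t,j}^{(k)}$ ($s<i-1$) ``into $\underline{\alpha}_i$ via the inductive hypothesis'', cannot work: every such correction is in the image of $\pi_i^{\perp}$, hence is a section of $\underline{\alpha}_i^{\perp}$, so it lies in $\underline{\alpha}_i$ only if it vanishes --- and it does not vanish in general (already for $i=2$ one has $\alpha_{3,j}^{(0)}-\alpha_{2,j}^{(0)}=\pi_2^{\perp}H_{1,j}+\pi_2^{\perp}\pi_1^{\perp}H_{2,j}$, with $\pi_2^{\perp}H_{1,j}$ a generically nonzero section of $\underline{\alpha}_2^{\perp}$). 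Worse, the framing ``$\wordrank(\underline{\alpha}_{i+1})=\wordrank(\underline{\alpha}_i)+{}$rank of the new generators mod $\underline{\alpha}_i$'' is itself unjustified, because $\underline{\alpha}_i$ is \emph{not} a subbundle of $\underline{\alpha}_{i+1}$: only the covering condition \eqref{Equation:CoverCondition}, $\pi_i\underline{\alpha}_{i+1}=\underline{\alpha}_i$, holds (compare Example \ref{Example:MapsIntoU3}, where $H_{0,1}\notin\underline{\alpha}_2$), so a quotient-by-$\underline{\alpha}_i$ count does not compute $\wordrank(\underline{\alpha}_{i+1})$.

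The paper's proof avoids both difficulties by exploiting the \emph{column} structure of a matching array instead of a reduction modulo $\underline{\alpha}_i$. It splits the generators as $\underline{\alpha}_{i+1}=P\oplus Q$ according to the column index: for the new columns $A_{i-k}+1\leq j\leq A_{i-k+1}$ the matching condition forces $K_{l,j}=0$ for all $l<i-k$, whence every lower-order term vanishes \emph{identically} and $\alpha_{i+1,j}^{(k)}$ equals $C_i^iH_{i-k,j}^{(k)}=C_i^iK_{i-k,j}^{(k)}$ exactly --- no congruence needed --- so that conditions (ii)--(iii) of Definition \ref{Definition:ArrayMatchingPair} directly deliver the increments $\sum_{t=0}^{i}l_t^{i-t}$ and $\sum_{t=0}^{i}s_t^{i-t}$. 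For the old columns $1\leq j\leq A_{i-k}$ no reduction to leading terms is attempted: since $\pi_i\alpha_{i+1,j}^{(k)}=\alpha_{i,j}^{(k)}$, the block $P$ satisfies $\wordrank(P)=\wordrank(\underline{\alpha}_i)$, and the splitting $\underline{F}_i=\underline{\alpha}_i\cap\underline{F}_{i-1}\oplus\underline{\alpha}_i^{\perp}\cap\underline{F}_{i-1}^{\perp}$ gives $\wordrank(P\cap\underline{F}_i)=\wordrank(\underline{\alpha}_i\cap\underline{F}_{i-1})$, which is exactly what (ii) and (iii) require. Your parity bookkeeping via Proposition \ref{Proposition:GivingTheRightHsInTheGeneralCase} for sorting the new generators between $\underline{F}_i$ and $\underline{F}_i^{\perp}$ agrees with the paper; but without the column-splitting device your central independence claim fails, and (i)--(iii) are not established.
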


\bigskip
\begin{proof}
Let $i\in \left\{1,...,r-1\right\} $. We know that $\underline{\alpha} _{i+1}$ is spanned by $\{\underline{\alpha}_{i+1,j}^{(k)}\}_{0\leq k\leq i, 1\leq j\leq n}$. Since our array matches the pair $\left( L,S\right) $, we split  $\underline{\alpha}_{i+1}$, considering $\underline{\alpha}_{i+1}=P\oplus Q$, where

\[P=\wordspan\left\{\underline{\alpha}_{i+1,j_{k}}^{(k)}\right\}_{\text{\tiny{$
\begin{array}{l}
0\leq k\leq i-1 \\
1\leq j_{k}\leq A_{i-k}
\end{array}$}}}\text{ and }Q=\wordspan\left\{ C_{i}^{i}H_{i-k,j_{k}}^{(k)}\right\}_{\text{\tiny{$
\begin{array}{l}
0\leq k\leq i \\
A_{i-k}+1\leq j_{k}\leq A_{i-k+1}
\end{array}$}}
}.\]

The matching condition tells us that $K_{l,j}=0$ whenever $l<i$ and $A_i \leq j \leq A_{i+1}$. Then, for every
$0 \leq k \leq i$ and $A_{i-k}+1 \leq j_k \leq A_{i-k+1}$, we have
$C_{i}^{i}H_{i-k,j_{k}}^{(k)}=C_{i}^{i}K_{i-k,j_{k}}^{(k)}$; hence, from Definition \ref{Definition:ArrayMatchingPair},
$\wordrank(Q)=\sum_{j=0}^{i}l_{j}^{i-j}$, if $i$ even, and $\wordrank(Q)=\sum_{j=0}^{i}s_{j}^{i-j}$, if $i$ odd.

The matching condition ensures also that, whenever $0\leq k\leq i-1$ and $1\leq j\leq A_{i-k},$ $\alpha_{i,j}^{(k)}=\sum_{l=k}^{i-1}C_{l}^{i-1}H_{l-k,j}^{(k)}\neq 0$. Writing, as before, $\alpha_{i+1,j}^{(k)}=\alpha_{i,j}^{(k)}+\pi_{i}^{\perp}\left(\sum_{l=0}^{i-1}S_{l}^{i-1}\sum_{t=0}^{l}\binom{l}{t}H_{t+1,j}\right)$, we conclude that $\wordrank(P)=\wordrank(\alpha_{i,j})$, proving (i). On the other hand, since
$\underline{F}_{i}=\underline{\alpha}_{i}\cap \underline{F}_{i-1}\oplus\underline{\alpha}_i^\perp\cap\underline{F}_{i-1}^{\perp}$, we also obtain
$\wordrank(P\cap\underline{F}_i)=\wordrank(\underline{\alpha}_{i}\cap \underline{F}_{i-1})$, thus $\wordrank(\underline{\alpha}_{i+1}\cap\underline{F}_i)=\wordrank(\underline{\alpha}_i\cap\underline{F}_{i-1})+\wordrank(Q)$ getting (ii). The proof of (iii) is analogous, just interchanging $\underline{F}$ with $\underline{F}^{\perp}$ and $l$ with $s$.
\end{proof}

From the previous Lemma, using an induction argument, we easily get the following identities:

\bigskip

\begin{corollary}\label{Corollary:CorollaryToProveTheorem:MainTheoremForUnitonCounting}
Under the above conditions the following equalities hold:

(i) $\wordrank(\underline{\alpha}_{i+1}\cap \underline{F}_{i})=\left\{
\begin{array}{l}
\displaystyle{\sum_{j=0}^{\frac{i}{2}}\sum_{t=0}^{2j}l_{t}^{2j-t}+\sum_{j=0}^{\frac{i-2}{2}}\sum_{t=0}^{2j+1}s_{t}^{2j+1-t}\text{, if $i$ even}}
\\
\displaystyle{\sum_{j=0}^{\frac{i-1}{2}}\sum_{t=0}^{2j} l_{t}^{2j-t}+\sum_{j=0}^{\frac{i-1}{2}}\sum_{t=0}^{2j+1}s_{t}^{2j+1-t}\text{, if $i$ odd}};
\end{array}
\right.$

(ii) $\wordrank(\underline{\alpha}_{i+1}\cap\underline{F}^{\perp}_{i})=\left\{
\begin{array}{l}
\displaystyle{\sum_{j=0}^{\frac{i}{2}}\sum_{t=0}^{2j}s_{t}^{2j-t}+\sum_{j=0}^{\frac{i-2}{2}}\sum_{t=0}^{2j+1}l_{t}^{2j+1-t}\text{, if $i$ even}}\\
\displaystyle{\sum_{j=0}^{\frac{i-1}{2}}\sum_{t=0}^{2j}s_{t}^{2j-t}+\sum_{j=0}^{\frac{i-1}{2}}\sum_{t=0}^{2j+1}l_{t}^{2j+1-t}\text{, if $i$ odd}.}
\end{array}
\right. $
\end{corollary}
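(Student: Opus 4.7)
\textit{Proof proposal.} The plan is to prove (i) by induction on $i$, iterating the recursion in Lemma \ref{Lemma:LemmaToProveTheorem:MainTheoremForUnitonCounting}(ii); the statement (ii) will then follow by exactly the same argument with $\underline{F}$ replaced by $\underline{F}^{\perp}$ and $l$ interchanged with $s$.

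For the base case $i=0$, the bundle $\underline{F}_0=F_0$ decomposes $\underline{\alpha}_1$ into its $F_0$-valued and $F_0^{\perp}$-valued parts, so $\wordrank(\underline{\alpha}_1\cap\underline{F}_0)$ coincides with $l_0^0$ (the number of independent $L$-generators in the array), which matches the formula for $i=0$ (the $s$-sum is empty, running from $j=0$ to $-1$). For the inductive step, assume (i) holds for some $i\geq 0$. By Lemma \ref{Lemma:LemmaToProveTheorem:MainTheoremForUnitonCounting}(ii),
\[
\wordrank(\underline{\alpha}_{i+2}\cap\underline{F}_{i+1})=\wordrank(\underline{\alpha}_{i+1}\cap\underline{F}_{i})+\begin{cases}\sum_{t=0}^{i+1}l_{t}^{i+1-t}&\text{if }i+1\text{ even,}\\ \sum_{t=0}^{i+1}s_{t}^{i+1-t}&\text{if }i+1\text{ odd.}\end{cases}
\]
One then substitutes the inductive hypothesis and observes that, because the parity of $i+1$ is opposite to that of $i$, the new contribution is added to the ``other'' family of sums. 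This is exactly what is needed to bring the summation limits up from $i/2$ or $(i-1)/2$ to the corresponding limits for $i+1$ stated in the Corollary. A short bookkeeping check in each of the two parity cases completes the induction.

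For (ii), the argument is identical after swapping $l_{t}^{\cdot}\leftrightarrow s_{t}^{\cdot}$ and $\underline{F}\leftrightarrow\underline{F}^{\perp}$, using Lemma \ref{Lemma:LemmaToProveTheorem:MainTheoremForUnitonCounting}(iii) in place of (ii). The base case again reduces to the $F_0^{\perp}$-part of $\underline{\alpha}_1$.

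The only delicate point, and the place where I expect to spend some care, is the indexing: the upper limits of the double sums depend on the parity of $i$ in an asymmetric way (namely $i/2$ versus $(i-2)/2$ for $i$ even), so one must verify that the new term produced by the recursion is precisely the ``missing'' top summand in the appropriate family. Apart from this routine matching of indices, no further input is needed — the Corollary is simply the unwinding of the telescoping recursion provided by the Lemma.
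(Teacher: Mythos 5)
Your proposal is correct and follows exactly the paper's own route: the paper derives the Corollary ``from the previous Lemma, using an induction argument,'' which is precisely your telescoping induction via Lemma \ref{Lemma:LemmaToProveTheorem:MainTheoremForUnitonCounting}(ii) (and (iii) for part (ii)), with the base case $\wordrank(\underline{\alpha}_1\cap F_0)=l_0^0$ and the parity bookkeeping you describe. No gaps; the index-matching you flag as the delicate point does check out in both parity cases.
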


\bigskip

\textit{Proof of Theorem \ref{Theorem:MainTheoremForUnitonCounting}.}

From our data, we have, of course, $\wordrank(\underline{\alpha}_{1}\cap F_{0})=l_{0}^{0}$ and
$\wordrank(\underline{\alpha}_{1}\cap F_{0}^{\perp })=s_{0}^{0}$. Hence, since $\underline{F}_{1}=\underline{\alpha}_{1}\cap F_{0}\oplus\underline{\alpha}_{1}^\perp\cap F_{0}^{\perp}$, $\wordrank(\underline{F}_{1})=l_{0}^{0}+n-k-\wordrank(\underline{\alpha}_{1}\cap F_{0}^{\perp})=n-\left[k+s_{0}^{0}-l_{0}^{0}\right]$.

\bigskip

Analogously, from the equality $\underline{F}_{2}=\underline{\alpha}_{2}\cap\underline{F}_{1}\oplus\underline{\alpha}_{2}^{\perp}\cap\underline{F}_{1}^{\perp}$, we get, using Lemma \ref{Lemma:LemmaToProveTheorem:MainTheoremForUnitonCounting} and Corollary \ref{Corollary:CorollaryToProveTheorem:MainTheoremForUnitonCounting}
\[\begin{array}{ll}
\wordrank(\underline{F}_{2})&=l_{0}^{0}+s_{0}^{1}+s_{1}^{0}+\wordrank(\underline{F}_{1}^{\perp})-\wordrank(\underline{\alpha}_{2}\cap\underline{F}_{1}^{\perp})\\
~&=l_{0}^{0}+s_{0}^{1}+s_{1}^{0}+k+s_{0}^{0}-l_{0}^{0}-(s_{0}^{0}+l_{0}^{1}+l_{1}^{0})\\
~&=k+\displaystyle{\sum_{j=0}^{1}(s_{j}^{2t+1-j}-l_{j}^{2t+1-j})}.\end{array}\]

Assume now that the proposition holds for $\underline{F}_{i}$, where $i$ is even (the proof for $i$ odd is analogous).

The equality $\underline{F}_{i+1}=\underline{\alpha} _{i+1}\cap\underline{F}_{i}\oplus\underline{\alpha}_{i+1}^{\perp}\cap\underline{F}_{i}^{\perp }$ implies that
\[\begin{array}{ll}\wordrank(\underline{F}_{i+1})&=\wordrank(\underline{\alpha}_{i+1}\cap\underline{F}_{i})+\wordrank(\underline{\alpha}_{i+1}^{\perp}\cap\underline{F}_{i}^{\perp})\\
~&=\wordrank(\underline{\alpha}_{i+1}\cap\underline{F}_{i})+\wordrank(\underline{F}_{i}^{\perp})-\wordrank(\underline{\alpha}_{i+1}\cap\underline{F}_{i}^{\perp}).\end{array}\]

\bigskip

Now, using Lemma \ref{Lemma:LemmaToProveTheorem:MainTheoremForUnitonCounting}, we get \[\wordrank(\underline{F}_{i+1})=\wordrank(\underline{\alpha}_{i}\cap\underline{F}_{i-1})+\sum_{t=0}^{i}l_{t}^{i-t}+n-\wordrank(\underline{F}_{i})-\wordrank(\underline{\alpha}_{i}\cap\underline{F}_{i-1}^{\perp})-\sum_{t=0}^{i}s_{t}^{i-t}.\]

\bigskip

From Corollary \ref{Corollary:CorollaryToProveTheorem:MainTheoremForUnitonCounting} and the knowledge of $\wordrank(\underline{F}_{i})$, we conclude that
\[\begin{array}{ll}
\wordrank(\underline{F}_{i+1})&=\displaystyle{n-k-\sum_{j=0}^{\frac{i}{2}-1}\sum_{t=0}^{2j+1}(s_{t}^{2j+1-t}-l_{t}^{2j+1-t})+\sum_{j=0}^{\frac{i-2}{2}}\sum_{t=0}^{2j}(l_{t}^{2j-t}-s_{t}^{2j-t})}\\
~&+\displaystyle{\sum_{j=0}^{\frac{i-2}{2}}\sum_{t=0}^{2j+1}(s_{t}^{2j+1-t}-l_{t}^{2j+1-t})+\sum_{j=0}^{i}(l_{j}^{i-j}-s_{j}^{i-j})}\\
~&=\displaystyle{n-\Big[k+\sum_{j=0}^{\frac{i}{2}-1}\sum_{t=0}^{2j}(s_{t}^{2j-t}-l_{t}^{2j-t})\Big]+\sum_{j=0}^{i}(l_{j}^{i-j}-s_{j}^{i-j})}\\
~&=\displaystyle{n-\Big[ k+\sum_{j=0}^{\frac{i}{2}}\sum_{t=0}^{2j}(s_{j}^{2t-j}-l_{j}^{2t-j})\Big]},\end{array}\]
as wanted.
\qed

\textit{Proof of Theorem \ref{Theorem:EstimatesOnTheUnitonNumber}.}

Given a matrix $D$, we will let $D_i$ denote its $i$'th column. We consider $k \in\left \{1,...,n \right\}$ fixed and let $r_k$ denote the maximal uniton number for harmonic maps $\varphi=(\pi_0-\pi_0^{\perp})(\pi_1-\pi_1^{\perp})...(\pi_i-\pi_i^{\perp}) : S^2 \rightarrow G_*(\mathbb C^n)$, where $F_0$ is a complex subspace of $\mathbb C^n$ with dimension $k$.

We will first show that it is not possible to have simultaneously $r_k \geq k$ and $r_k \geq n-k$. Indeed, these two conditions would imply the existence of a pair $(L,S)$ of $r_k\times r_k$ matrices, adapted to $F_0$, matching a given array; from the fullness of $\underline{\alpha}_1$ we would get\renewcommand{\arraycolsep}{5pt}
\[L_{1}=
\begin{array}{rl}
\left[
\begin{array}{c}
1 \\
\vdots  \\
1 \\
0 \\
\vdots \\
0
\end{array}
\right]&
\hspace{-5mm}\begin{array}{c}
\left\}\text{$\begin{array}{l}~\\~\\~\end{array}$}\right.\hspace{-5mm}n-k\\
~\\
~\\
~\\
\end{array}
\end{array}
\text{ and }
S_{1}=
\begin{array}{rl}
\left[
\begin{array}{c}
1 \\
\vdots  \\
1 \\
0 \\
\vdots \\
0
\end{array}
\right]&
\hspace{-5mm}\begin{array}{c}
\left\}\text{$\begin{array}{l}~\\~\\~\end{array}$}\right.\hspace{-5mm}k\\
~\\
~\\
~\\
\end{array}
\end{array}
\]\renewcommand{\arraycolsep}{1pt}\noindent
which cannot happen since the sum of the entries of both matrices must be strictly less than $n$.

We have to analyze, separately, the different situations $k<p$ and $k\geq p$. The techniques are similar, so that we only present the first case.

Consider that $k<p$ and let $(L,S)$ be a pair of $r_k \times r_k$ matrices, adapted to $F_0$ and matching an $F_0$-array. It is easily seen that $k<2p-k\leq n-k$. Consider $k<r_k\leq n-k$. Assume that $r_k$ is even. From Theorem \ref{Theorem:MainTheoremForUnitonCounting}, we can write
\[p-k = \sum_{j=0}^{\frac{r_k}{2}-1}\sum_{t=0}^{2j+1}(s_t^{2j+1-t}-l_t^{2j+1-t}).\]
The fullness of $\underline{\alpha}_1$ implies\renewcommand{\arraycolsep}{5pt}
\[L_{1}=
\begin{array}{rl}
\left[
\begin{array}{c}
1 \\
\vdots  \\
1 \\
0 \\
\vdots \\
0
\end{array}
\right]&
\hspace{-5mm}\begin{array}{c}
\left\}\text{$\begin{array}{l}~\\~\\~\end{array}$}\right.\hspace{-5mm}k\\
~\\
~\\
~\\
\end{array}
\end{array},\,L_i=0,\text{ if }i>1\text{ and }S_{1}=\left[
\begin{array}{c}
1 \\
\vdots\\
1
\end{array}
\right],\]\renewcommand{\arraycolsep}{1pt}\noindent
so that

\[\sum_{j=0}^{\frac{r_k}{2}-1}(s_0^{2j+1}-l_0^{2j+1})=\frac{r_k-k+a_k}{2}.\]
Hence, $p-k=\frac{r_k-k+a_k}{2} + \theta$, where $0 \leq \theta \leq n-r_k-1$. Therefore $p-k \geq \frac{r_k-k+a_k}{2}$, which implies $r_k \leq 2p-k-a_k$.

If $r_k$ is odd we will get instead
\begin{equation*}
\begin{array}{ll}
n-(p+k)&=\displaystyle{\sum_{j=0}^{\frac{r_k-1}{2}}\sum_{t=0}^{2j}(s_t^{2j-t}-l_t^{2j-t})}\\
~&=\displaystyle{\frac{r_k-k+1-a_k}{2}+\sum_{j=0}^{\frac{r_k-1}{2}}\sum_{t=1}^{2j}(s_t^{2j}-l_t^{2j})<\frac{r_k-k+1-a_k}{2}+n-k-r_k.}
\end{array}
\end{equation*}
Hence $r_k<2p-k-a_k+1$, or $r_k \leq 2p-k-a_k$.

These estimates are sharp as we may easily see. For instance, in the case $r_k$ odd, we can consider the pair $(L,S)$ of order $2p-k-a_k$ with\renewcommand{\arraycolsep}{5pt}
\[L_{1}=
\begin{array}{rl}
\left[
\begin{array}{c}
1 \\
\vdots  \\
1 \\
0 \\
\vdots \\
0
\end{array}
\right]&
\hspace{-5mm}\begin{array}{c}
\left\}\text{$\begin{array}{l}~\\~\\~\end{array}$}\right.\hspace{-5mm}k\\
~\\
~\\
~\\
\end{array}
\end{array},\,L_i=S_i=0\text{ if }i>1\text{ and }S_1=\left[
\begin{array}{c}
1 \\
\vdots \\
1
\end{array}
\right].\]\renewcommand{\arraycolsep}{1pt}\noindent
Taking meromorphic functions $L_{0,1}$ and $E_{0,1}$ with such that
\[\wordspan\{L_{0,1},L_{0,1}^{(1)},...,L_{0,1}^{(k)}\}=F_0\text{ and }\wordspan\{E_{0,1},E_{0,1}^{(1)},...,E_{0,1}^{(n-k)}\}=F_0^\perp,\]
we get an array matching $(L,S)$.

Then $\sum_{j=0}^{\frac{r_k-1}{2}}\sum_{t=0}^{2j}(s_t^{2j-t}-l_t^{2j-t})=\frac{2p-k-a_k-k+a_k}{2}=p-k$, concluding the proof.



\end{document}